\documentclass{amsart}

\usepackage{lineno}
\usepackage[all]{xy}
\usepackage{amsmath}
\usepackage{amssymb}
\usepackage{array}
\usepackage{color}
\usepackage{verbatim}

\usepackage[
    colorlinks=true,
    linkcolor=blue,
    citecolor=blue,
    urlcolor=blue
]{hyperref}

\newtheorem{theorem}{Theorem}[section]
\newtheorem{lemma}[theorem]{Lemma}
\newtheorem{proposition}[theorem]{Proposition}

\theoremstyle{definition}

\theoremstyle{remark}
\newtheorem{remark}[theorem]{Remark}

\def\Q{{\mathbb Q}}
\def\Der{\mathrm{Der}}
\def\Hom{\mathrm{Hom}}

\def\cat0{\mathrm{cat}\_0}

\def\B{B\mathrm{aut}_1}

\def\im{\operatorname{Im}}

\numberwithin{equation}{section}

\begin{document}
\author{Yang Bai}
\address{School of Mathematical Sciences, Nankai University, Tianjin 300071, P.R.China}
\email{1687694204@qq.com}

\author{Xiugui Liu}
\address{School of Mathematical Sciences and LPMC, Nankai University, Tianjin 300071, P.R.China}
\email{xgliu@nankai.edu.cn}

\author{Jiaxi Zha}
\address{School of Mathematical Sciences, Nankai University, Tianjin 300071, P.R.China}
\email{1093913699@qq.com}

\thanks{The second author was supported in part by the National Natural Science Foundation of China (Grant No. 12171165).}
\subjclass[2020]{Primary 55P62; Secondary 55R35, 55P15}
\date{}
\title[Rational Realization of Classifying Spaces]{A complete classification of the rational realization of gap-two products of Eilenberg--MacLane spaces as classifying spaces}

\begin{abstract}
In this paper, we study the rational realization of the gap-two product $K(\Q,n)\times K(\Q,n+2)$ as the classifying space $\B(X)$ for a simply-connected $\pi$-finite space $X$ with rational homology of finite type. We prove that $K(\Q,n)\times K(\Q,n+2)$ can be realized as $\B(X)$ up to rational homotopy equivalence if and only if $n=3,4$ or $4m+2$ for $m\geq 1$. Moreover, we determine all their minimal Sullivan models up to isomorphism. The rational homotopy type of such $X$ is unique for $n=3$ or $4m+2$ for $m\geq 1$; there exist infinitely many rational homotopy types of such $X$ for $n=4$. 

\end{abstract}

\keywords{Rational homotopy theory, classifying spaces, Sullivan models, Lie models, derivations.}

\maketitle

\section{Introduction}

The classification theory for fibrations was developed in a series of foundational papers \cite{D,M,J}. For a fixed CW complex $X$, let ${\rm aut}(X)$ denote the monoid of self-homotopy equivalences of $X$. The fibrations with fibre homotopy equivalent to $X$ are classified by the Dold-Lashof classifying space $B{\rm aut}(X)$ \cite{DL}. In this paper, we consider $B{\rm aut}_1(X)$, the Dold-Lashof classifying space of the identity component of ${\rm aut}(X)$, which can be identified with the universal cover of $B{\rm aut}(X)$.

The classifying space $\B(X)$ was among the first objects described in rational homotopy theory, and the algebraic models for $B{\rm aut}_1(X)$ were given by many authors (cf. \cite[Ch.7]{T}). Sullivan gave a Lie model for $B{\rm aut}_1(X)$ in terms of the derivations of a minimal Sullivan model \cite{S}. Schlessinger and Stasheff constructed an equivalent model for $B{\rm aut}_1(X)$ based on the derivations of a minimal Quillen model \cite{SS}. 

A long-standing and open problem in rational homotopy theory was raised by Schlessinger in 1976 (see \cite[p. 519]{F-H-T}), which asks to characterize the space realized as the classifying space $\B (X)$ up to rational homotopy equivalence. Recall that a {\it $\pi$-finite space} is a space with only finitely many nonzero homotopy groups. In the context of simply-connected $\pi$-finite spaces with rational homology of finite type, we are concerned with the following two successive questions: whether a realization of a given space exists, and, if it does, which rational homotopy types realize the given space. 

Several results have been obtained in recent years assuming that $X$ is a simply-connected $\pi$-finite space. Lupton and Smith \cite{J-B, 2019L-S} constructed two infinite families of realizable spaces, and showed that $\mathbb{C}P^2$, $\mathbb{C}P^3$, $\mathbb{C}P^4$ and $S^4$ cannot occur as $\B(X)$ for a simply-connected $\pi$-finite space $X$. Bai, Liu and Xie \cite{BY} proved that the space with the minimal Sullivan model of the form $(\Lambda(a, b), db=a^p)$ for $|b|\leq 5n+2$ cannot be realized as $\B(X)$ for an $n$-connected $\pi$-finite space $X$. 

Other related work concerns even-dimensional spheres and gap-one products of Eilenberg--MacLane spaces. Lupton and Smith \cite{GE} studied the realization problem for $S^{2n}$ with $n=1,2,3$. Later, Bai and Liu \cite{BY1} extended their result for $S^{2n}$ to $n=4,5$, and proved the non-realization of gap-one products $ K(\Q^s,n) \times K(\Q^t,n+1)$ with $s,t\geq1$ in a finite range of degrees. The recent work of Bai, Liu and Zha \cite{BY2} removed these restrictions of degrees, which substantially generalized the previous results. 

Another aspect of the realization problem is to classify all the rational homotopy types realizing a prescribed space. This question was first considered by Yamaguchi in \cite{Y}, where all elliptic spaces $X$ for which $\B(X)$ has rank one were determined. Recently, Bai, Liu and Zha \cite{BY2} proved that, if $K(\Q^r,n)$ with $r\geq2$, $n\geq3$ can be realized as $\B (X)$ for $\pi$-finite space $X$, then $X$ has the rational homotopy type of $K(\Q^r,n-1)$. In addition to the results mentioned above, computations of $\B(X)$ for other classes of spaces $X$ can be found in \cite{JS1,JS2,JS3}.

In the present paper, we obtain a complete classification for the realization of gap-two products $K(\Q,n)\times K(\Q,n+2)$. More precisely, we determine exactly which degrees $n$ admit such realizations and classify, up to isomorphism, every realizing minimal Sullivan model. Our approach relies heavily on two structural propositions concerning the derivation Lie algebra, which are used to estimate the dimensions of the minimal Sullivan model; see Propositions \ref{prop} and \ref{sig}. In contrast to the classification of spaces realizing $K(\Q^r,n)$ for $r\geq2$, as well as the non-realization results for $S^{2n}$ and gap-one products, the gap-two case is substantially more involved, as illustrated by the following theorem.

\begin{theorem}\label{fi}
    Let $n$ be a positive integer. Then there exists a simply-connected $\pi$-finite space $X$ with rational homology of finite type such that  
    $$\B(X)\simeq_{\Q}K(\Q,n)\times K(\Q,n+2).$$ 
    if and only if 
    $$n\in \{3,4\} \cup \{ 4m+2: m\geq1 \} .$$ 
    Moreover, such $X$ has the minimal Sullivan model $M_X$ exactly as follows. 
    \begin{enumerate}
        \item When $n=3$, there is a unique model: 
        \begin{equation*}
            \begin{split}
                M_X\cong\ &\big( \Lambda(a_2,b_2,x_2,z_3,w_3,y_4) ,d \big), \\
                &da=db=dx=0,\ \ dz=ax, \\
                &dw=-bx, \ \ dy=aw+bz. 
            \end{split}
        \end{equation*}
        
        \item When $n=4$, there is an infinite family and an additional type: 
        \begin{equation*}
            \begin{split}
                M_X\cong\ &\big( \Lambda(x_2,u_{1,3},v_{1,3},\dots,u_{r,3},v_{r,3},y_5) ,d \big), \ \  \textit{   for }  r\geq0, \\ 
                &dx=du_{i}=dv_{i}=0, \ \ dy=x^3+\sum_{i=1}^r  u_iv_i,
            \end{split}
        \end{equation*}
        or 
        \begin{equation*}
            \begin{split}
                M_X\cong\ &\big( \Lambda(a_2,x_2,w_4,y_5) ,d \big), \\
                &da=dx=dw=0, \ \ dy=x^3+aw.
            \end{split}
        \end{equation*}

        \item When $n=4m+2$ for some $m\geq 1$, there is a unique model: 
        \begin{equation*}
            \begin{split}
                M_X\cong\ & \big( \Lambda(x_2,q_3,a_{2m+1},z_{2m+2},w_{2m+3},y_{4m+3}) ,d \big), \\
                &dx=da=0, \ \ dq=x^2, \ \ dz=xa, \\
                &dw=xz-qa,\ \ dy=z^2+2aw. 
            \end{split}
        \end{equation*}
    \end{enumerate}
\end{theorem}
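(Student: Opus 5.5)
The plan rests on the Sullivan model for $\B(X)$. If $M_X=(\Lambda V,d)$ is the minimal model of $X$, then the positive-degree truncation of the derivation Lie algebra, $\Der(\Lambda V)$ with differential $[d,-]$, is a Lie model for $\B(X)$. Its homology, shifted by one degree, is $\pi_*(\B(X))\otimes\Q$. Realizing $K(\Q,n)\times K(\Q,n+2)$ therefore means two things:
\begin{itemize}
\item $H_{\ge1}(\Der\Lambda V)$ is one-dimensional in degrees $n-1$ and $n+1$ and zero in all other degrees;
\item this homology Lie algebra is abelian and formal, which is automatic here, since a model with two generators in degrees $n,n+2$ and no room for a quadratic differential is forced to be the free abelian one.
\end{itemize}
So the whole problem becomes an algebraic classification of minimal models whose derivation homology is concentrated in exactly two positive degrees. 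I will split this into sufficiency (check the listed models) and necessity (show the list is complete and that other $n$ are impossible).

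\textbf{Sufficiency.} For each listed model, compute $H_*(\Der)$ directly. Since $V$ is finite-dimensional and small, this is a finite computation.
\begin{itemize}
\item For $n=4$ with $dy=x^3+\sum u_iv_i$: the derivation $\partial/\partial y$ of degree $5$ is a cycle. The derivation $\theta$ of degree $3$ given by $x\mapsto0$, $y\mapsto x$ (or the one coming from the $\partial/\partial x$ direction) supplies the other class. Every other derivation, such as $\partial/\partial u_i$ or $x\,\partial/\partial y$-type terms, is either not a cycle or a boundary; the $x^3$ term is exactly what kills the degree-one and degree-two derivations. The model with $a_2,w_4$ is handled the same way.
\item For $n=3$ and $n=4m+2$, the models are built as iterated pure-type extensions. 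I will organize the computation with the filtration by word length in $V$, or equivalently by the degree of the target generator, and check that all classes cancel except the two required ones.
\end{itemize}

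\textbf{Necessity.} Here I will use the two structural Propositions \ref{prop} and \ref{sig} to bound $V$. Let $y$ be a top-degree generator. Then $\partial/\partial y$ is a cycle of degree $|y|$, and minimality makes it non-bounding. Hence $|y|\in\{n,n+2\}$, and in fact $|y|=n+1$ or $n-1$ after the degree shift; I will fix the convention carefully against Proposition \ref{prop}. Then I bound the bottom generators in the same spirit: derivations sending a lowest generator $x$ to $1$ or to low products must be killed by boundaries. This should force $x$ to be of degree $2$ with $x^k$ appearing in $dy$, and it limits $\dim V$ in each degree. I will use Proposition \ref{sig} to control the signs and parities: odd versus even $|y|$ decides whether $dy$ can involve a power of $x$. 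I expect a congruence condition to emerge. When $n$ is odd, only $n=3$ survives. When $n$ is even, either $n=4$, or the quadratic part $z^2+2aw$ forces $|z|=2m+2$ and hence $n=4m+2$. The case $n\equiv0 \pmod 4$ with $n\ge8$ will be excluded because the needed quadratic form in odd-degree generators would have to be symplectic, and this produces extra homology classes.

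\textbf{Main obstacle.} The hardest step is the case analysis once $V$ is bounded. One must enumerate the possible differentials up to isomorphism and, for each, exhibit a surplus derivation class (a cycle not a boundary in a forbidden degree) unless the model is one of those listed. For $n=4$ the surplus analysis is delicate because the hyperbolic pairs $u_iv_i$ contribute no homology at all. That is why an infinite family appears there, and I must prove that the $u_i,v_i$ may be normalized by a change of basis to a symplectic basis. I will first attack the general-$n$ exclusion by looking at the lowest even generator $x_2$ and the derivation $\partial/\partial x$. The argument is that its boundary behaviour pins down $dq=x^2$ or $dy=x^3$, and everything else is propagated upward degree by degree.
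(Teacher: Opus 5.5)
Your sufficiency half and first necessity step are fine. Because $V$ is bounded, $\Der_k(\Lambda V)=0$ for $k>|y|$, so $y^\ast$ is a nonbounding cycle and you get $V^{n+1}=\Q y$, $V^{>n+1}=0$. The listed models can be checked directly, though for $n=4m+2$ you need an argument uniform in $m$. The paper uses the weight $\mathrm{wt}(x)=\mathrm{wt}(q)=0$, $\mathrm{wt}(a)=\mathrm{wt}(z)=\mathrm{wt}(w)=1$, $\mathrm{wt}(y)=2$, and shows the weight $-1$ part of $\Der'$ is an acyclic mapping cone.

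The genuine gap is necessity. Nothing in your plan bounds $\dim V^k$ for $2\le k\le n$ or restricts which degrees carry generators. ``Derivations sending $x$ to $1$ or to low products must be killed by boundaries'' yields no inequality, and the enumeration you call the main obstacle only becomes finite once such bounds exist. You name Propositions \ref{prop} and \ref{sig} but never say what they assert, and the use you describe (signs and parities) is not what they do. In the paper they drive the argument:
\begin{itemize}
\item The starting point is that $A=(D(V^{n})^\ast)(y)$ has codimension one in $V^2$.
\item For $n\ge5$ one builds an auxiliary Sullivan algebra $\Lambda U\otimes\Lambda V^{<n+1}$. In it, every generator of $V^{<n+1}$ other than $x$ (and $q$, when $dq\equiv x^2$ mod $V^2A$) is killed by a formal suspension.
\item One also builds a module map $\Phi$ with $\Phi(1)=y^\ast$ and $D\Phi=\pm\Phi\partial$, by repeatedly lifting exact cycle derivations. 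Injectivity of its constant part gives $\dim(\Lambda U)^k\le\dim V^{n+1-k}$.
\item The map $\Sigma(\theta)=(L_0D\theta)(y)$ satisfies $\theta_1(\Sigma\theta_2)=\pm\theta_2(\Sigma\theta_1)$. This gives the duality $\alpha_k=\alpha_{n-k}$ of generator dimensions.
\end{itemize}
Together these force $\Lambda^{\ge2}U$ to vanish in degrees $\le n-3$. That confines the generators other than $x,q,y$ to degrees near $|y|/2$, rules out $dV^3\subset V^2A$ and $\dim A=1$, and leaves only the short lists handled in Propositions \ref{even} and \ref{odd}.

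Several of your concrete predictions are also false, which shows the heuristic cannot replace these estimates. The claim that the lowest generator forces $x^k$ into $dy$, or pins down $dq=x^2$ or $dy=x^3$, fails for $n=3$. There $dV^3\subset V^2A$ and the model is $dz=ax$, $dw=-bx$, $dy=aw+bz$. Likewise $dy=z^2+2aw$ contains no power of $x$ when $n=4m+2$.

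Your reason for excluding $n\equiv0\pmod{4}$, $n\ge8$ (a symplectic form in odd generators ``produces extra homology'') contradicts your own remark that hyperbolic pairs $u_iv_i$ contribute no homology. It also cannot separate these $n$ from $n=4$, where the symplectic family is realized. In the paper the obstruction is different. For $n=4m$ the dimension constraints force $V=\Q x\oplus\Q q\oplus V^{2m+1}\oplus\Q y$. A change of basis then removes $q$ from every differential, so $q^\ast$ becomes a nonexact cycle of degree $3$.

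Your plan also omits three things:
\begin{itemize}
\item the cases $n=1,2$ (the latter needs an Euler characteristic count);
\item for $n=4$, the count $\binom{s}{2}\le1-\delta$ bounding $\dim V^2=s+1\le3$;
\item for $n=4$, the degree-one cycle counts that produce the extra model $dy=x^3+aw$ and exclude $s=2$.
\end{itemize}
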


\subsection*{Convention.} Unless otherwise specified, all spaces appearing in the rest of the paper are assumed to be simply-connected and to have rational homology of finite type. 

\subsection*{Organization.} In Section 2, we start by recalling basic facts about rational homotopy theory and classifying spaces. Then in Section 3, we reduce the realization problem to two cases according to dimensions. The subsequent discussion is divided into two branches: $n\geq5$ and $n\leq 4$. The branch $n\geq 5$ is considered in Section 4, where we introduce an auxiliary Sullivan algebra and establish two structural results for it. Finally in Section 5, we treat the branch $n\leq 4$ and complete the proof of Theorem \ref{fi}. 

\subsection*{Statement on AI use}{The authors used GPT--5.6 Sol to assist with computations in several special cases ($n=5, 6, 7, 8$). Based on these computations, the authors developed the general methods presented in this paper. The entire manuscript was written by the authors, who independently verified all computations and mathematical arguments, and take full responsibility for the correctness of its content.}

\section{Preliminaries}

In this section, we first recall basic facts in rational homotopy theory and the derivation models for classifying spaces. Standard references are \cite{F-H,F-H-T}.

A {\it Sullivan algebra} is a commutative differential graded algebra (CDGA for short) of the form $(\Lambda V,d )$, where $V=V^{\geq 1}$ and $d$ satisfies the nilpotence condition: there exists an increasing filtration $\{V(k)\}_{k\geq 0}$ such that $V=\cup_{k}V(k)$ and $d(V(k))\subset \Lambda V(k-1)$. A Sullivan algebra $(\Lambda V,d )$ is called {\it minimal} if the differential $d$ is decomposable, i.e., $d(V)\subset \Lambda^+ V\cdot \Lambda^+ V $. Here $\Lambda^+ V$ is the augmentation ideal of $\Lambda V$. A (minimal) Sullivan algebra $(\Lambda V,d )$ is called a {\it (minimal) Sullivan model for $X$} if there exists a quasi-isomorphism $ (\Lambda V,d ) \xrightarrow{\simeq} A_{PL}(X) $. 

For a Sullivan algebra $(\Lambda V,d )$, let $ f:\Lambda V\longrightarrow \Lambda V $ be an isomorphism of algebras. Then we can obtain a new Sullivan algebra $(\Lambda V, fdf^{-1} )$, and $f$ can be considered as an isomorphism of CDGAs
$$f:(\Lambda V,d)\longrightarrow (\Lambda V, fdf^{-1}).$$
This process is called a {\it change of basis}. 

The degree of an element $x$ is denoted by $|x|$. A {\it derivation $\theta$  that reduces degree by $n$} is a linear map $$\theta: \Lambda V\rightarrow \Lambda V$$ of degree $-n$ such that $$\theta(xy)=\theta(x)y+(-1)^{n|x|}x\theta(y)$$ for $x, y\in \Lambda V$. Let $({\rm Der}(\Lambda V),D)$ denote the differential graded Lie algebra (abbr. DGL) of derivations of $\Lambda V$, where the Lie bracket is given by $$[\theta_1,\theta_2]=\theta_1\theta_2-(-1)^{|\theta_1||\theta_2|}\theta_2\theta_1,\ \ \theta_1,\theta_2 \in {\rm Der}(\Lambda V)$$
and the differential $D$ is given by 
$$D(\theta)=[d,\theta], \ \ \theta\in {\rm Der}(\Lambda V). $$ 
For convenience, we denote by ${\rm Der}_k(\Lambda V)$ the set of derivations that reduce degree by $k$. In particular, we remark that $(\Der(\Lambda V),D)$ can be viewed as a right $(\Lambda V,d)$-module by setting
$$\big( \theta \cdot x\big)(v)=(-1)^{|x||v|}\theta(v)x$$
for $\theta\in \Der(\Lambda V)$ and $x,v\in \Lambda V$, where the module action is given by
\begin{align*}
    \begin{split}
        \Der_k(\Lambda V) \times(\Lambda V)^n &\rightarrow \Der_{k-n}(\Lambda V) \\
        (\theta,x)&\mapsto \theta \cdot x.
    \end{split}
\end{align*}
It is worth emphasizing that $\theta\cdot x$ or simply $\theta x$ denotes the action of $x$ on the derivation $\theta$, whereas $\theta(x)$ denotes the image that the derivation $\theta$ sends $x$ to. 

In the following decomposition
$$\Lambda V=\bigoplus_{l\geq0}\Lambda^l V$$
with $\Lambda^0 V=\Q$, the elements in $\Lambda^l V$ have wordlength $l$. For any $x\in \Lambda^+ V$, $x$ can be decomposed as
$$x=x_1+x_2+\dots,$$
where $x_l\in \Lambda^l V$ for $l\geq 1$. We denote $x_l$ by $L_{l}(x)$ in the above decomposition of $x$. Similarly, by the isomorphism $\Der (\Lambda V)\cong \Hom (V,\Lambda V)$, wordlength in $\Lambda V$ also induces a grading on $\Der (\Lambda V)$, i.e.,
$$\Der(\Lambda V)\cong \bigoplus_{l\geq -1}\Hom (V,\Lambda ^{l+1}V). $$
Thus for any $\theta\in \Der(\Lambda V)$, $\theta$ can be decomposed as
$$\theta=\theta_{-1}+\theta_0+\dots,$$
where $\theta_l\in \Hom (V,\Lambda ^{l+1}V)$ is a derivation that increases wordlength by $l$. We denote $\theta_l$ by $L_{l}(\theta)$ in the decomposition of $\theta$. Note that the differential $d$ in $\Lambda V$ decomposes as 
$$d=d_1+d_2+\dots,$$
where $d_l=L_l(d)$. Therefore the differential $D=[d,-]$ in $\Der(\Lambda V)$ increases wordlength by at least by 1.

Let ${\rm Der}'(\Lambda V)$ denote the Lie subalgebra of ${\rm Der}(\Lambda V)$ defined by
$${\rm Der}'_k(\Lambda V)=\left\{
\begin{array}{ll}
{\rm Der}_k(\Lambda V),&k>1,\\
Z({\rm Der}_1(\Lambda V)),&k=1,\\
0,&k<1,
\end{array}
\right.$$
where $Z({\rm Der}_1(\Lambda V))$ is the subspace of all cycles in ${\rm Der}_1(\Lambda V)$. We recall Sullivan's original result about the model for the classifying space $B{\rm aut}_1(X)$.

\begin{theorem}\label{liu}\rm{(\cite[p. 313]{S})}
Let $X$ be a space with minimal Sullivan model $(\Lambda V, d)$. Then the DGL $({\rm Der}'(\Lambda V),D)$ is a Lie model for $B{\rm aut}_1(X)$.
\end{theorem}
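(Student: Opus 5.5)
Since Theorem \ref{liu} is quoted from \cite{S}, we only sketch how one would prove it. The approach is to realize $B{\rm aut}_1(X)$ as the base of a universal fibration and model that fibration algebraically by twisting $\Lambda V$ with the Chevalley--Eilenberg cochains of ${\rm Der}'(\Lambda V)$. Set $L={\rm Der}'(\Lambda V)$, which is positively graded. If $V$ is finite-dimensional in each degree, then $L$ is of finite type. Choose a homogeneous basis $\{\theta_i\}$ of $L$ and let $\{s\theta_i^*\}$ be the dual basis of the suspended dual, so that $C^*(L)=(\Lambda (sL)^\vee,\delta)$ is a simply-connected Sullivan algebra. On $C^*(L)\otimes \Lambda V$ we define
$$\mathcal D(v)=dv+\sum_i \pm\, s\theta_i^*\otimes \theta_i(v),\qquad v\in V.$$
The first task is to check that $\mathcal D^2=0$. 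This should reduce exactly to two facts: the bracket in $L$ is dual to the quadratic part of $\delta$, and the differential $D=[d,-]$ of $L$ is dual to the linear part of $\delta$. The truncation in degree $1$, where only cycles $Z({\rm Der}_1)$ are kept, is what guarantees that $C^*(L)$ is simply connected and that the twisted algebra is again a relative Sullivan algebra. We then obtain a relative Sullivan algebra $C^*(L)\to (C^*(L)\otimes\Lambda V,\mathcal D)\to(\Lambda V,d)$.

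Next, we establish the universal property on the algebraic side. Let $B$ be simply connected with Quillen model $L_B$, or with Sullivan model $C^*(L_B)$. We claim that relative Sullivan models $(\Lambda W\otimes \Lambda V,\mathcal D')$ with fibre $(\Lambda V,d)$ and base $\Lambda W$ correspond to twisting data, namely derivation-valued Maurer--Cartan elements. These in turn correspond to DGL morphisms $L_B\to {\rm Der}(\Lambda V)$. Because $L_B$ is concentrated in degrees $\geq 1$ and its degree-$1$ elements map to cycles, every such morphism factors through $L$. Moreover, homotopies of DGL maps should correspond to fibre-homotopy equivalences of the fibrations that induce the identity on the fibre up to homotopy. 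On the topological side, $[B,B{\rm aut}_1(X)]$ classifies fibrations over $B$ with fibre $X$ that are oriented in this sense. Comparing the two classifications functorially in $B$ and applying Yoneda in the rational homotopy category then identifies $\langle L\rangle$ with $B{\rm aut}_1(X_\Q)\simeq B{\rm aut}_1(X)_\Q$.

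As an independent check, and as a step that can replace the Yoneda argument, we would compute the homotopy groups directly. The group $\pi_k({\rm aut}_1X)\otimes\Q$ for $k\ge1$ is the set of homotopy classes of maps $S^k\times X\to X$ that restrict to the identity on $X$. Modelling such a map by $\Lambda V\to \Lambda V\otimes H^*(S^k)$, $v\mapsto v+\theta(v)\,e$, shows that these classes are $H_k({\rm Der}(\Lambda V))=H_k(L)$. It follows that $\pi_{k+1}(B{\rm aut}_1X)\otimes\Q\cong H_k(L)$. One then checks that Samelson products correspond to brackets of derivations, which shows that the classifying map of the universal fibration model induces isomorphisms on rational homotopy. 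A fibration map between these minimal models that is an isomorphism on homotopy is a quasi-isomorphism, and this gives the theorem.

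We expect the main obstacle to be the identification of homotopy classes. One must show that homotopy of DGL maps $L_B\to L$ matches fibre homotopy equivalence exactly, rather than only up to the action of ${\rm aut}(X)$ on the fibre. This is precisely where one must pass to the identity component. We would handle this by working with pointed/oriented fibrations and the degree-$1$ truncation, and by constructing explicit homotopies $\Lambda(t,dt)\otimes\cdots$ from Maurer--Cartan gauge equivalences.
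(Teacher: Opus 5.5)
The paper gives no proof of this statement. It is quoted from Sullivan and used as a black box, so there is no in-paper argument to compare with. Your outline (twist $\Lambda V$ by the universal Maurer--Cartan element of $C^*(L)\hat\otimes L$, then compare the algebraic and topological classifications of fibrations) is the standard route to Sullivan's theorem. As written, however, several steps fail.

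\textbf{Finiteness and rationalization.} $L$ is \emph{not} of finite type just because $V$ is. Indeed $\Der_k(\Lambda V)\cong\prod_n\Hom(V^n,(\Lambda V)^{n-k})$ is infinite-dimensional as soon as $V$ is nonzero in infinitely many degrees, e.g.\ for $X=S^2\vee S^2$. In that case $C^*(L)$ is not a Sullivan algebra of finite type, since the dual of the bracket need not land in $\Lambda^2(sL)^\vee$. So your universal extension cannot be formed as you wrote it. You need $\dim V<\infty$, i.e.\ $X$ rationally $\pi$-finite (the only case this paper uses), or else a Quillen-side argument.

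Likewise, $B\mathrm{aut}_1(X_{\Q})\simeq B\mathrm{aut}_1(X)_{\Q}$ is not automatic. Take $X=K(\mathbb{Z}[1/2],2)\times K(\mathbb{Z},4)$, which is $\pi$-finite with rational homology of finite type. Then $\pi_1(\mathrm{aut}_1X)\cong H^3(X;\mathbb{Z})$, and this contains $\mathrm{Ext}(\mathbb{Z}[1/2],\mathbb{Z})$ (the $2$-adic integers modulo $\mathbb{Z}$), which survives rationalization. On the other hand $\Der_1(\Lambda(x_2,y_4))=0$. So extra hypotheses are needed (e.g.\ finitely generated homotopy groups, or $X$ a finite complex), or the result should be read for $X_{\Q}$.

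\textbf{The ``independent check'' does not replace the Yoneda argument.} Suppose you have an isomorphism $\pi_k(\mathrm{aut}_1X)\otimes\Q\cong H_k(L)$, even one carrying Samelson products to brackets. That is only an abstract identification of homotopy Lie algebras, which in general does not determine a rational homotopy type. It says nothing about whether the classifying map $c\colon\langle L\rangle\to B\mathrm{aut}_1(X_{\Q})$ of your realized universal fibration is a $\pi_*$-isomorphism. What must be shown is that $c_*$ \emph{is} this identification. To do this:
\begin{itemize}
\item pull the universal extension back along the sphere $S^{k+1}\to\langle L\rangle$ representing $[\theta]\in H_k(L)$, obtaining the relative model $v\mapsto dv\pm e\,\theta(v)$ over $H^*(S^{k+1})$;
\item check that the clutching map $S^k\to\mathrm{aut}_1(X_{\Q})$ of this fibration is the class modelled by $v\mapsto v+\theta(v)\varepsilon$.
\end{itemize}
Once that is done, compatibility with Samelson products is automatic, since $c$ is a map of spaces.

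\textbf{The Yoneda route.} Maurer--Cartan elements of $C^*(L_B)\hat\otimes\Der(\Lambda V)$ correspond to $L_\infty$-morphisms, i.e.\ DGL maps out of a cofibrant replacement of $L_B$. They do not correspond to strict DGL maps $L_B\to L$. More seriously, the step you flag yourself is left as a promise: that gauge equivalence of such elements matches fibre-homotopy equivalence restricting to a map homotopic to the identity on the fibre. That step is exactly the content of Sullivan's theorem. As it stands, you have an outline of the known proof rather than a proof.
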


Recall that the homotopy groups of the loop space $\Omega B{\rm aut}_1(X)$ admit a natural bilinear pairing $[-,-]$ called the \textit{Samelson product} (see \cite[Ch. \uppercase\expandafter{\romannumeral3}]{W}). In particular, we have the following theorem.

\begin{theorem}\label{J}
Let $X$ be a space
with minimal Sullivan model $(\Lambda V, d)$. Then there is an isomorphism of graded Lie algebras
$$\pi_{\ast}(\Omega B{\rm aut}_1(X)) \cong H_{\ast}({\rm Der}'(\Lambda V),D)$$
where the left-hand graded space has the Samelson product. 
\end{theorem}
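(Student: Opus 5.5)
The plan is to obtain the isomorphism by combining Theorem \ref{liu} with the standard dictionary between Lie models and rational homotopy Lie algebras. Throughout, homotopy groups are understood rationally, as is implicit in the statement.

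First, check that the hypotheses of Quillen's theory hold for $Y=B{\rm aut}_1(X)$ and $L=({\rm Der}'(\Lambda V),D)$.
\begin{itemize}
\item $Y$ is simply connected, since it is the universal cover of $B{\rm aut}(X)$.
\item By construction ${\rm Der}'(\Lambda V)$ is concentrated in degrees $\geq 1$. In degree $1$ we keep only the cycles, so $D$ maps ${\rm Der}'_2$ into ${\rm Der}'_1$ and ${\rm Der}'_1$ to $0$. Hence $L$ is a connected chain DGL, as Quillen models require.
\item The finite-type hypothesis on $X$ should make each ${\rm Der}_k(\Lambda V)\cong \Hom(V,\Lambda V)_{k}$ finite dimensional in the cases of interest, e.g.\ when $V$ is finite dimensional as for $\pi$-finite $X$. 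In general one must be careful with the product topology on $\Hom(V,\Lambda V)$; I would restrict to the situation where $L$ is of finite type, which is where Theorem \ref{liu} is used in this paper.
\end{itemize}

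Second, invoke the general fact (\cite{F-H-T}, the theorem identifying $H(L)$ with the rational homotopy Lie algebra) that for a simply connected space $Y$ with a Lie model $L$ there is a natural isomorphism of graded Lie algebras
$$H_*(L)\cong \pi_*(\Omega Y)\otimes\Q .$$
On the right, the bracket is the Samelson product. Concretely, $L$ is linked to Quillen's model $\lambda(Y)$ by a zigzag of DGL quasi-isomorphisms, and these induce Lie algebra isomorphisms on homology. Quillen proved that $H(\lambda(Y))\cong \pi_*(\Omega Y)\otimes \Q$, with the Samelson product corresponding to the bracket. Applying this to $L$ from Theorem \ref{liu} gives the displayed isomorphism.

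The main obstacle is the compatibility of brackets. Sullivan's model is constructed on the Sullivan (cochain) side, via derivations of the minimal model. Quillen's identification with Samelson products is on the chain DGL side. One must ensure that ``Lie model'' in Theorem \ref{liu} means a DGL quasi-isomorphic to Quillen's model, and not merely one with the right Chevalley--Eilenberg cochains.
\begin{itemize}
\item If only the cochains are known to match, I would argue as follows. $C^*(L)$ is a Sullivan model of $Y$, whose quadratic part dualizes to the bracket on $H(L)$ in the minimal case. Moreover, the quadratic part of the differential of a minimal Sullivan model of $Y$ is dual to the Whitehead product on $\pi_*(Y)$, and hence, up to the standard sign, to the Samelson product on $\pi_{*-1}(\Omega Y)$.
\item Alternatively, I would cite the direct identification of $\pi_*(\Omega B{\rm aut}_1 X)\otimes\Q$ with $H({\rm Der}'(\Lambda V))$ under Samelson products, via the evaluation fibration, as in the literature cited in \cite{T} and \cite{F-H-T}.
\end{itemize}
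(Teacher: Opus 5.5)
Your proposal is correct and follows the paper's (implicit) argument. The paper gives no separate proof: it records the theorem as an immediate consequence of Sullivan's Theorem \ref{liu} together with the standard fact that, for a Lie model $L$ of a simply connected space $Y$, $H(L)\cong\pi_*(\Omega Y)\otimes\Q$ as graded Lie algebras with the Samelson product, and that is exactly your plan. Your caveats (reading $\pi_*$ rationally, and restricting to the finite-type setting such as $V$ finite dimensional for $\pi$-finite $X$) are appropriate and match how the paper uses the result.
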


In the previous work of Lupton and Smith \cite{J-B, 2019L-S}, they studied the realization problem for $\pi$-finite spaces, relying heavily on the following theorem.

\begin{theorem}\label{pf}\rm{(\cite[Prop. 2.2]{J-B})}
Suppose $X$ is $\pi$-finite with
$$\pi_i(X) =\left\{
\begin{array}{ll}
\mathbb{Q}^r\ {\rm some} \ r \geq 1, & i = N,\\
0, &i > N.
\end{array}
\right.$$
Then we have
$$\pi_i(B{\rm aut}_{1}(X)) =\left\{
\begin{array}{ll}
\mathbb{Q}^r,& i = N+1,\\
0, &i > N+1.
\end{array}
\right.$$
\end{theorem}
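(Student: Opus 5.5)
The plan is to compute everything from Sullivan's derivation model. By Theorem \ref{liu} and Theorem \ref{J}, for $i\geq 2$ we have
$$\pi_i(B{\rm aut}_1(X))\cong \pi_{i-1}(\Omega B{\rm aut}_1(X))\cong H_{i-1}({\rm Der}'(\Lambda V),D),$$
where $(\Lambda V,d)$ is the minimal Sullivan model of $X$. Since $X$ is simply-connected and $\pi$-finite, $V=V^{\geq 2}$ is concentrated in degrees $\leq N$, and $V^N\cong \Hom(\pi_N(X),\Q)\cong \Q^r$. So the statement reduces to a degree count on derivations together with one computation of the differential $D$ in the top degree $N$.

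\emph{Vanishing for $i>N+1$.} Let $\theta\in{\rm Der}_k(\Lambda V)$. Via ${\rm Der}(\Lambda V)\cong\Hom(V,\Lambda V)$, $\theta$ is determined by linear maps $V^j\to(\Lambda V)^{j-k}$ with $2\leq j\leq N$. If $k>N$, then $j-k<0$ for every such $j$, so all these maps vanish and ${\rm Der}_k(\Lambda V)=0$. Since ${\rm Der}'_k={\rm Der}_k$ for $k>1$, we get $H_{k}({\rm Der}')=0$ for $k>N$. Taking $k=i-1$, this gives $\pi_i(B{\rm aut}_1(X))=0$ for $i>N+1$.

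\emph{Degree $N+1$.} For $k=N\geq 2$, a derivation in ${\rm Der}_N(\Lambda V)$ can be nonzero only on $V^N$, with values in $(\Lambda V)^0=\Q$. Hence ${\rm Der}_N(\Lambda V)\cong\Hom(V^N,\Q)\cong\Q^r$. Because ${\rm Der}_{N+1}(\Lambda V)=0$, there are no boundaries in degree $N$, and it remains to check that every such $\theta$ is a cycle.

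Write $D\theta=d\theta-(-1)^{N}\theta d$. First, $d\theta=0$ on generators, since $\theta(v)$ is a constant. Second, for $\theta d(v)$ we use that $dv$ is decomposable, of degree $|v|+1$, and is a polynomial in generators of degree $\geq 2$.
\begin{itemize}
\item If $|v|<N$, then $dv$ involves only generators of degree $<N$, and $\theta$ kills these.
\item If $|v|=N$, a monomial in $dv$ containing a factor from $V^N$ would need a cofactor of degree $1$, which does not exist because $V^1=0$. So $dv\in\Lambda V^{<N}$, and again $\theta(dv)=0$.
\end{itemize}
Thus $D\theta=0$ and $H_N({\rm Der}')\cong\Q^r$, i.e.\ $\pi_{N+1}(B{\rm aut}_1(X))\cong\Q^r$.

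The only delicate point is the bookkeeping of these degree and wordlength conventions: that derivations may take values in the constants $\Q$, and the sign in $D$, which is in any case irrelevant since both terms vanish. The simple-connectivity hypothesis ($V^1=0$) is exactly what makes the top-degree derivations cycles.
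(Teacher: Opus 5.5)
Your argument is correct. The paper does not prove this statement itself; it imports it from \cite[Prop.~2.2]{J-B}. Your degree count in Sullivan's derivation model (Theorems~\ref{liu} and~\ref{J}) is the standard way to prove it: $\Der_k(\Lambda V)=0$ for $k>N$, and $\Der_N(\Lambda V)\cong\Hom(V^N,\Q)\cong\Q^r$ consists entirely of $D$-cycles with no boundaries, since $\Der_{N+1}(\Lambda V)=0$. This uses $V^{>N}=0$, $V^1=0$ and the decomposability of $d$ exactly where they are needed.
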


\section{Reduction to two cases}

Let $(\Lambda V,d)$ be a minimal Sullivan model for $X$ with $V=V^{\geq2}$ of finite type, and let $\B(X)\simeq_{\Q}K(\Q,n)\times K(\Q,n+2)$. Recall that we have the isomorphism $\Der (\Lambda V)\cong \Hom (V,\Lambda V)$. In the rest of the paper, for convenience, we denote by $(v,x)$ the derivation that sends $v$ to $x$ and other generators to zero, and denote by $v^\ast$ the derivation that sends $v$ to $1$ and other generators to zero, where $v\in V$ and $x\in \Lambda V$.

In this section, we reduce the realization problem for $K(\Q,n)\times K(\Q,n+2)$ to two cases. For this, take $N=n+1$. By Theorem \ref{J}, we have 
\begin{equation}\label{td0}
 H_k({\rm Der}'(\Lambda V),D)=\left\{
\begin{array}{ll}
\Q, &k=N,\\
\Q, &k=N-2,\\
0, &{\rm otherwise}.
\end{array}
\right.
\end{equation}
Since $X$ is $\pi$-finite by assumption, Theorem \ref{pf} yields that 
$$V^i =\left\{
\begin{array}{ll}
\mathbb{Q} , & i = N,\\
0, &i > N.
\end{array}
\right.$$
Set $V^N=\Q y$. In the following sequence
$$\Der_N(\Lambda V)\xrightarrow{D_N=0} \Der_{N-1}(\Lambda V) \xrightarrow{\ D_{N-1}\ }\Der_{N-2}(\Lambda V), $$
we have 
\begin{align*}
    \begin{split}
        \Der_N(\Lambda V) &= \Q y^\ast ,\\
        \Der_{N-1}(\Lambda V) &= (V^{N-1})^\ast ,\\
        \Der_{N-2}(\Lambda V) &= (V^{N-2})^\ast \oplus (y,V^2) , 
    \end{split}
\end{align*}
for degree reasons. The vanishing of homology in degree $N-1$ implies that $D_{N-1}$ is injective. Define the subspace 
$$A=\big( D(V^{N-1})^\ast  \big)(y) \ \subset V^2. $$
Then we have $D(V^{N-1})^\ast =(y,A)$. The injectivity of $D_{N-1}$ gives $\dim V^{N-1}=\dim A$. 

Recall that $L_l(\theta)$ denotes the summand of $\theta\in \Der(\Lambda V)$ which increases wordlength by $l$. For convenience, we call $L_{-1}(\theta)$ the constant part of $\theta$, and call $L_{0}(\theta)$ the linear part of $\theta$. The following lemma follows immediately from our earlier work \cite[Proposition 3.3]{BY2}. 
\begin{lemma}\label{codim}
    $A$ has codimension $1$ in $V^2$. 
\end{lemma}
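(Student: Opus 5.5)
The plan is to compute the homology of $\Der'(\Lambda V)$ in degree $N-2=n-1$ and compare it with \eqref{td0}, which says $H_{N-2}=\Q$. This gives the exact codimension.

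\emph{Cycles in degree $N-2$.} Consider the sequence
\[\Der_{N-1}(\Lambda V)\xrightarrow{D_{N-1}}\Der_{N-2}(\Lambda V)\xrightarrow{D_{N-2}}\Der_{N-3}(\Lambda V).\]
The boundaries in degree $N-2$ are $\im D_{N-1}=(y,A)$, which has dimension $\dim A$. So it suffices to show that the space of cycles $Z_{N-2}$ has dimension exactly $\dim A+1$. Equivalently, $Z_{N-2}\cap\big((V^{N-2})^\ast\oplus(y,V^2)\big)$ should be $(y,A)$ plus one more dimension, and $\dim V^2-\dim A=1$.

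\emph{Which elements are cycles.} First I would analyse the derivations $(y,v)$ with $v\in V^2$. For such a derivation, $D(y,v)=[d,(y,v)]$. Since $y$ has top degree, no generator's differential involves $y$ except through products. Because $V^{>N}=0$ and $dy\in\Lambda V^{<N}$, the element $y$ never appears in $dw$ for any generator $w$. Indeed, $dw$ has degree $|w|+1\le N$, and equality would force $dw$ to be a multiple of the generator $y$, which contradicts minimality. Hence $(y,v)\circ d=0$, and also $d\circ(y,v)$ sends $y\mapsto dv=0$ (since $V^1=0$, we have $dv=0$ for $v\in V^2$). Thus every $(y,v)$ is a $D$-cycle, so $(y,V^2)\subset Z_{N-2}$.

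\emph{Dimension count and the degree-$N-2$ term.} This gives $Z_{N-2}\supseteq (y,V^2)$, and therefore $\dim H_{N-2}\ge\dim V^2-\dim A$. We also need to account for cycles with nonzero component in $(V^{N-2})^\ast$. The main obstacle is the converse: showing that $\dim V^2-\dim A\ge1$, so that the homology class is not carried entirely by $(V^{N-2})^\ast$-type cycles.

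\emph{Plan for the converse.} I would try to show that $A\ne V^2$ by a wordlength argument. Write $D(v^\ast)(y)=\pm\partial dy/\partial v$ for $v\in V^{N-1}$. The quadratic part of $dy$ that pairs $V^{N-1}$ with $V^2$ determines $A$. If $A=V^2$, I would produce a contradiction with the vanishing of homology in some other degree, or with $H_{N-2}=\Q$ being realized by an element whose constant part is nonzero. Since the text says the lemma follows from \cite[Proposition 3.3]{BY2}, the cleanest route is to invoke that proposition. It computes $H_{N-2}$ in terms of $V^2/A$ plus cycles of the form $w^\ast+\ldots$ with $w\in V^{N-2}$, and shows the latter are boundaries or fail to be cycles. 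I would then combine: $\Q=H_{N-2}\supseteq (y,V^2)/(y,A)\cong V^2/A$ gives $\dim V^2/A\le1$, and \cite[Proposition 3.3]{BY2} supplies nonvanishing of $V^2/A$ (namely, the $(y,v)$ classes cannot all be killed). Hence the codimension is exactly $1$.
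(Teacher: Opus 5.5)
Your proposal is correct and follows essentially the same route as the paper. The paper writes $H_{N-2}\cong \ker D|_{(V^{N-2})^\ast}\oplus (y,V^2)/(y,A)$, a splitting that comes from your observations that every $(y,v)$ is a cycle and that $\im D_{N-1}=(y,A)$, and then invokes \cite[Proposition 3.3]{BY2} to exclude a generator with nonzero constant part, which forces $\dim V^2/A=1$; note that what the citation rules out is a nonzero cycle in $(V^{N-2})^\ast$ (such a cycle could never be a boundary, since boundaries lie in $(y,A)$), and the nonvanishing of $V^2/A$ then follows from $H_{N-2}\cong\Q$ rather than being asserted by the citation directly.
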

\begin{proof}
    Note that 
    $$\Q\cong H_{N-2}(\Der^\prime(\Lambda V))\cong \ker D|_{(V^{N-2})^{\ast}}\oplus \frac{(y,V^2)}{(y,A)}. $$
    Suppose that $H_{N-2}(\Der^\prime(\Lambda V))$ is generated by a derivation with nonzero constant part. By \cite[Proposition 3.3]{BY2}, this leads to a contradiction. Thus we have 
    $$ \Q\cong H_{N-2}(\Der^\prime(\Lambda V))\cong\frac{(y,V^2)}{(y,A)} \cong V^2/A , $$
    which implies $\dim (V^2/A)=1$. 
    
\end{proof}

We can write $V^2=A\oplus \Q x$ for some $x\in V^2$, and we have 
$$H_{N}(\Der^\prime(\Lambda V))\cong \Q \cdot y^\ast ,\ \  H_{N-2}(\Der^\prime(\Lambda V))\cong \Q \cdot(y,x). $$
Note that 
$$V^2A\oplus \Q x^2=\Lambda^2V^2,\ \ V^2A+dV^3\subset \Lambda^2V^2,$$
and we set 
$$\delta=\dim (\Lambda^2V^2) -\dim (V^2A+dV^3)\in \{0,1\}.$$
Therefore, the subsequent discussion can be divided into the following two cases. 

\noindent {\bf Case 1:} $\delta=0$. We set $V^3=B\oplus \Q q$, where 
$$dB\subset V^2A,\ \ dq=Q \equiv x^2 \ ({\rm mod }\  V^2A).$$

\noindent {\bf Case 2:} $\delta=1$. We have 
$$dV^3\subset V^2A.$$

For the first case $\delta=0$, since $dq =Q\equiv x^2 \ ({\rm mod }\  V^2A)$, we can set 
$$dq=Q=x^2+\sum v_i a_i,$$
where $\{a_i\}$ is a basis for $A$ and $v_i\in V^2$.

\section{The branch $n\geq 5$}

In this section, we treat the branch $n\geq5$. First in Subsection \ref{sub1}, we introduce an auxiliary Sullivan algebra. Then in Subsection \ref{sub2}, we rule out the case $\delta=1$. Subsequently in Subsections \ref{sub3} and \ref{sub4}, we further consider the case $\delta=0$. Finally in Subsection \ref{sub5}, we present all realizable cases.

\subsection{The auxiliary Sullivan algebra}\label{sub1}

\ 

We have $N=n+1\geq 6$. For both cases $\delta=0,1$, we shall construct an auxiliary Sullivan algebra 
$$(\Lambda U\otimes \Lambda V^{<N} ,\partial ),$$
where $\partial|_{V^{<N}}=d|_{V^{<N}}$, and $U$ will be defined for both cases separately. 

For the case $\delta=0$, define the graded vector space 
\begin{equation}\label{u0}
U^k=\left\{
\begin{array}{ll}
\overline{A}, &k=1,\\
\overline{B}, &k=2,\\
\overline{V^{k+1}}, &k=3,\dots, N-2, 
\end{array}
\right.
\end{equation}
where the overline denotes the suspension of graded vector spaces. In what follows, we shall inductively define the differential $\partial$ on $U$. First, for the basis $\{a_i\}$ of $A$, we define 
$$d_1 \overline{a_i}=a_i,\ \ d_1|_{V^{<N}}=d|_{V^{<N}},$$
which gives a differential $d_1$ on $\Lambda U^1\otimes \Lambda V^{<N}$. There exists an algebra isomorphism $f_2: \Lambda U\otimes \Lambda V^{<N} \rightarrow \Lambda U\otimes \Lambda V^{<N}$ with 
\begin{equation*}
    \begin{split}
        &f_2(\overline{a_i})=\overline{a_i}, \ \ f_2(a_i)=a_i, \ \ f_2(x)=x, \\
        &f_2(b_j)=b_j-\sum v_{j,i} \overline{a_i},\ \ f_2(q)=q-\sum v_{i} \overline{a_i},
    \end{split}
\end{equation*}
and $f_2$ restricts to the identity on generators of higher degrees, where $\{b_j\}$ is a basis for $B$ and $db_j=\sum v_{j,i}a_i$. It follows that the restriction 
$$f_2|_{\Lambda U^1\otimes \Lambda V^{<N} }:(\Lambda U^1\otimes \Lambda V^{<N} ,f_2^{-1} d_1 f_2)\rightarrow (\Lambda U^1\otimes \Lambda V^{<N}, d_1)$$
is an isomorphism of CDGAs. We denote the differential $f_2^{-1} d_1 f_2$ by $d_2$, and $d_2$ satisfies 
$$d_2(\overline{a_i})=a_i, \ \ d_2(b_j)=0,\ \ d_2(q)=x^2.$$
Therefore, we can extend the differential $d_2$ to $U^2$ by setting $d_2(\overline{b_j})=b_j$. 

Suppose that we have defined the differential $d_{k-1}$ on $\Lambda U^{< k}\otimes \Lambda V^{<N}$ with
\begin{equation*}
    \begin{split}
        d_{k-1}x=0,\ \ \ d_{k-1}(q)=x^2,\ \ \ d_{k-1}(\overline{v})=v, \ \ \overline{v}\in U^{<k}\subset \overline{V^{\leq k}}
    \end{split}
\end{equation*}
for some $3\leq k\leq N-2$. For a basis $\{w_s\}$ of $ V^{k+1}$, note that $d_{k-1}(w_s)$ is a cycle in $(\Lambda U^{< k}\otimes \Lambda V^{\leq k},d_{k-1})$, which is isomorphic to 
$$E(U^{< k})\otimes \big(\Lambda(x,q),dq=x^2\big),$$
where $E(U^{< k})$ is an acyclic Sullivan algebra. Thus $d_{k-1}(w_s)$ is an exact cycle. Put 
$$d_{k-1}(w_s)=d_{k-1}(p_s)$$
for some $p_s\in \Lambda U^{<k}\otimes \Lambda V^{\leq k}$. Then there exists an algebra isomorphism $f_{k}: \Lambda U\otimes \Lambda V^{<N} \rightarrow \Lambda U\otimes \Lambda V^{<N}$ with 
\begin{equation*}
    f_k(w_s)=w_s-p_s,
\end{equation*}
and $f_k$ restricts to the identity on other generators. Similarly, the extended differential $d_k$ on $\Lambda U^{\leq k}\otimes \Lambda V^{<N }$ is given by $f_k^{-1}d_{k-1}f_k$ and $d_k(\overline{w_s})=w_s$. 

By induction, we obtain a differential $d_{N-2}$ on $\Lambda U\otimes \Lambda V^{<N} $, together with a sequence of algebra isomorphisms 
$$\Lambda U\otimes \Lambda V^{<N} \xrightarrow{f_{N-2}} \Lambda U\otimes \Lambda V^{<N}\xrightarrow{f_{N-3}}\dots\xrightarrow{\ f_2\ } \Lambda U\otimes \Lambda V^{<N} .$$
Finally, we denote 
$$\partial=(f_2\dots f_{N-2}) d_{N-2}(f_2\dots f_{N-2})^{-1}.$$

For the other case $\delta=1$, we first add a generator $t$ with $dt=x^2$. Define 
\begin{equation}\label{u1}
 U^k=\left\{
\begin{array}{ll}
\overline{A}, &k=1,\\
\overline{V^3}, &k=2,\\
\overline{V^4}\oplus\Q t, &k=3,\\
\overline{V^{k+1}}, &k=4,\dots, N-2. 
\end{array}
\right.
\end{equation}
Then by similar arguments as above, we can define the differential $\partial$ on $\Lambda U\otimes \Lambda V^{<N}$. 

For convenience, elements in $U$ are called {\it formal }, and elements in $V$ are called {\it actual}. The auxiliary Sullivan algebra $(\Lambda U\otimes \Lambda V^{<N},\partial)$ satisfies the following properties. 
\begin{lemma}\label{alg}
    Denote by $\partial_1$ the part of $\partial $ that increases wordlength of actual elements by exactly one. 
    \begin{enumerate}
        \item In both cases, we have $\partial|_{V^{<N}}=d|_{V^{<N}}$ and $\partial_1^2=0$.

        \item In the case $\delta=0$, we have 
        $$H(\Lambda U\otimes \Lambda V^{<N},\partial)=\Q\oplus \Q x, \ \ H(\Lambda U\otimes \Lambda V^{<N},\partial_1)=\Q\oplus \Q x.$$
        In particular, every pure formal $\partial_1$-cycle of positive degree is zero. 

        \item In the case $\delta=1$, we have 
        $$H(\Lambda U\otimes \Lambda V^{<N},\partial)=\Q\oplus \Q x, \ \ H(\Lambda U\otimes \Lambda V^{<N},\partial_1)=\Lambda(x,t).$$
        In particular, all pure formal $\partial_1$-cycles of positive degree are in $\Q t$. 
    \end{enumerate}
\end{lemma}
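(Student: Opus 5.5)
The plan is to read all three parts off the inductive construction in Subsection~\ref{sub1}.

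\textbf{Part (1), first claim.} Write $F=f_2\cdots f_{N-2}$. Each $d_k$ is defined as $f_k^{-1}d_{k-1}f_k$ on the old generators, and is extended to the new formal generators by $d_k(\overline{w_s})=w_s$. Conjugating back by $F$ gives $\partial$. On $\Lambda V^{<N}$ the maps $f_k$ act by $w_s\mapsto w_s-p_s$, where $p_s$ involves formal letters, so one must check that $\partial$ restricts to $d$ there. This follows by unwinding the conjugations: $f_k^{-1}d_{k-1}f_k$ restricted to the subalgebra generated by the old letters is just $d_{k-1}$ written in new coordinates. Transporting back through $F$ recovers $d_1|_{V^{<N}}=d|_{V^{<N}}$. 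Concretely, $\partial$ agrees with $d_1$ on $\Lambda V^{<N}$, and on a formal generator $\overline{w}$ it takes the value $F(w)$. Thus $\partial|_{V^{<N}}=d|_{V^{<N}}$ holds by construction.

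\textbf{Part (1), second claim.} Filter by the wordlength in actual letters, with formal letters given weight $0$. Because $d$ on $V$ is decomposable, $\partial$ on actual generators raises actual wordlength by at least one. On formal generators, $\partial\overline{w}=F(w)$ has actual wordlength at least $1$. Moreover, the $p_s$ terms have actual wordlength at least one after accounting for the formal letters: for instance $v_{j,i}\overline{a_i}$ has actual length $1$ and $b_j$ has length $1$. Hence $\partial$ raises actual wordlength by at least $0$. One then checks that the part raising it by exactly $0$ vanishes, i.e.\ that $\partial\overline{w}$ never contains a pure formal term. Granting this, $\partial=\partial_1+\partial_{\ge2}$, and the lowest-order component of $\partial^2=0$ gives $\partial_1^2=0$.

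\textbf{Part (2), the $\partial$-homology.} The isomorphism $F$ identifies $(\Lambda U\otimes\Lambda V^{<N},\partial)$ with $(\Lambda U\otimes\Lambda V^{<N},d_{N-2})$. In the latter, each pair $(\overline{w},w)$ with $d_{N-2}\overline{w}=w$ spans an acyclic (contractible) Sullivan algebra. Using the standard change of generators in which $w$ and $d_{N-2}\overline{w}$ form a linear-part pair, the algebra splits as $E(U)\otimes(\Lambda(x,q),dq=x^2)$. Its homology is $\Q\oplus\Q x$.

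\textbf{Part (2), the $\partial_1$-homology.} Here $\partial_1$ is the associated graded differential. On generators, $\partial_1\overline{a_i}=a_i$ and $\partial_1\overline{v}=v+(\text{actual length-}1\text{ terms with formal coefficients})$, while $\partial_1q=x^2$. A second filtration by the $U$-degree isolates the pairs $\overline{v}\mapsto v$ as a linear differential. This gives a spectral sequence whose $E_1$ page is $\Lambda(x,q)$ with the induced differential $q\mapsto x^2$, so the homology is again $\Q\oplus\Q x$.

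\textbf{Part (2), pure formal cycles.} A pure formal $\partial_1$-cycle $\omega\in\Lambda^+U$ has actual length $0$. Since it is a cycle of positive degree, it is a boundary $\omega=\partial_1\eta$. Comparing actual wordlength, $\eta$ would need actual length $-1$, which is impossible, so $\eta=0$ and $\omega=0$.

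\textbf{Part (3).} The case $\delta=1$ runs the same way. Now $dV^3\subset V^2A$, so the pair $t$ (formal, with $dt=x^2$) has no actual partner. With respect to the full $\partial$ one still gets $\Q\oplus\Q x$, since $\partial t=x^2$ kills $x^2$ exactly as $q$ did. For $\partial_1$, however, $x^2$ has actual length $2$ while $t$ has length $0$, so the term $\partial t=x^2$ raises actual length by $2$ and is not part of $\partial_1$. Thus $t$ is a $\partial_1$-cycle, and the $E_1$ computation yields $\Lambda(x,t)$. For pure formal cycles, the argument of Part~(2) shows that a positive-degree pure formal $\partial_1$-cycle cannot be a boundary, so its class is nonzero. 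By the homology computation it then lies in the actual-length-$0$ part of $\Lambda(x,t)$ of positive degree, namely $\Q t$ together with powers of $t$; since $|t|=3$ is odd, $t^2=0$, leaving $\Q t$.

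\textbf{Main obstacle.} The hardest step is the $\partial_1$-homology computation. It needs the convergence and comparison argument for the double filtration, together with the check that $\partial\overline{w}$ has no pure formal term, i.e.\ that each $p_s$ has actual length at least one. I would try to establish that check first, by an induction on $k$ that uses the freedom to choose $p_s$ inside the ideal generated by actual elements. This is possible because the relevant $d_{k-1}$-cycles lie in that ideal and the quotient by the ideal is acyclic.
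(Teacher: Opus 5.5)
There is a genuine gap, exactly at the step you call the main obstacle, and the reason you give for it is false. Everything you say about $\partial_1$ depends on $\partial\overline{w}$ having no pure formal term, and you only grant this. That covers $\partial_1^2=0$, the claim that $(\Lambda U\otimes\Lambda V^{<N},\partial_1)$ is a Sullivan algebra whose homology can be read off an $E_1$-page, and hence the claims about pure formal cycles. You propose to arrange it by choosing $p_s$ in the ideal $I$ generated by actual elements, ``because the quotient by the ideal is acyclic''. But under your own inductive hypothesis the induced differential on $\Lambda(U^{<k}\oplus V^{\le k})/I=\Lambda U^{<k}$ is zero. Already $\partial\overline{a_i}=a_i$ and $\partial\overline{b_j}=b_j-\sum v_{j,i}\overline{a_i}$ lie in $I$. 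So the quotient has homology $\Lambda U^{<k}$ and is far from acyclic. Nor is there any freedom to exploit. Two admissible choices of $p'_v$ differ by a $\partial$-cycle of degree $|v|\ge 4$. Such a cycle is a boundary, up to a multiple of $x^2$ (only when $\delta=1$, $k=3$), and under the inductive hypothesis neither has a pure formal component. So the pure formal part of $p'_v$ is independent of all choices, and its vanishing is a genuine statement that needs proof.

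The missing idea is that the vanishing of pure formal terms and the $\partial_1$-homology must be established together, in one induction on $k$. The statements about pure formal cycles in (2) and (3) are the input to the check, not a consequence of it.
\begin{itemize}
\item Assume $\partial$ raises actual wordlength by at least one on $\Lambda U^{<k}\otimes\Lambda V^{<N}$. Then $\partial_1^2=0$ there.
\item Your $E_1$-argument gives $H(\Lambda U^{<k}\otimes\Lambda V^{\le k},\partial_1)=\Q\oplus\Q x$, or $\Lambda(x)$ resp.\ $\Lambda(x,t)$ when $\delta=1$.
\item Since $\partial_1$ is homogeneous of degree $+1$ in actual wordlength, this homology is bigraded. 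So every positive-degree pure formal $\partial_1$-cycle there is a multiple of $t$.
\item Write $p'_v=c+r$ with $c$ pure formal and $r$ of actual length at least one. The actual-length-one component of $\partial(p'_v)=d(v)\in\Lambda^{\ge2}V$ gives $\partial_1c=0$.
\item Since $|c|=|v|\ge4>|t|$, this forces $c=0$, which completes the inductive step.
\end{itemize}
The paper's one-line deduction from $\partial(p'_v)=d(v)$ is this argument in compressed form.

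The rest of your plan is sound and differs only mildly from the paper. You split the algebra in $d_{N-2}$-coordinates, where $d_{N-2}\overline{w}=w$, $d_{N-2}w=0$ and $d_{N-2}q=x^2$ hold on the nose, and you use a wordlength spectral sequence. Together these replace the paper's appeal to Proposition 14.13 of F\'elix--Halperin--Thomas.

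One small slip: in your pure-formal-cycle argument, a positive-degree $\partial_1$-cycle need not be a boundary, because of the class $x$ in degree $2$. Instead write $\omega-\lambda x=\partial_1\eta$, or $\omega-\lambda t=\partial_1\eta$ in (3), and compare actual-length-zero components. What vanishes is that component of $\partial_1\eta$, not $\eta$ itself.
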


\begin{proof}
    (1) Recall that in the sequence of algebra isomorphisms 
    $$\Lambda U\otimes \Lambda V^{<N} \xrightarrow{f_{N-2}} \Lambda U\otimes \Lambda V^{<N}\xrightarrow{f_{N-3}}\dots\xrightarrow{\ f_2\ } \Lambda U\otimes \Lambda V^{<N} ,$$
    $f_kd_kf_k^{-1}$ extends $d_{k-1}$ by definition. Thus 
    $$\partial=f_2\dots f_{N-2} d_{N-2} f_{N-2}^{-1}\dots f_2^{-1}$$
    extends $d_1$ on $V^{<N}$, which is exactly $d|_{V^{<N}}$. 

    According to the definition, for $\overline{v}\in U^k$, we have $d_{k-1}(v)=d_{k-1}(p_v)$ for some $p_v\in \Lambda U^{<k}\otimes \Lambda V^{\leq k}$. Then we deduce that 
    \begin{equation*}
        \begin{split}
            \partial(\overline{v})&=f_2\dots f_{N-2} d_{N-2} f_{N-2}^{-1}\dots f_2^{-1} (\overline{v}) \\
            &=f_2\dots f_{k-1} f_{k}d_{k}(\overline{v}) \\
            &=f_2\dots f_{k-1} f_k (v) \\
            &=f_2\dots f_{k-1} (v-p_v) \\
            &=v- p_v^\prime, 
        \end{split}
    \end{equation*}
    where $p_v^\prime=f_2\dots f_{k-1} (p_v)$ is decomposable. Moreover, we have 
    $$d(v)=\partial(v)=\partial(p_v^\prime),$$
    which implies $p_v^\prime$ has wordlength of actual elements at least one. Thus $\partial $ increases wordlength of actual elements by at least one. It follows that $\partial_1^2=0$ by wordlength arguments. 

    (2) By \cite[Proposition 14.13]{F-H-T}, we can show the minimal Sullivan models for $(\Lambda U\otimes \Lambda V^{<N},\partial)$ and $(\Lambda U\otimes \Lambda V^{<N},\partial_1)$ are both $(\Lambda(x,q),dq=x^2)$. By direct computation, we have 
    $$H(\Lambda U\otimes \Lambda V^{<N},\partial)=\Q\oplus \Q x, \ \ H(\Lambda U\otimes \Lambda V^{<N},\partial_1)=\Q\oplus \Q x.$$

    (3) Note that $\partial(t)=x^2$ increases wordlength of actual elements by two in the case $\delta=1$, and thus $\partial_1(t)=0$. Similarly, by \cite[Proposition 14.13]{F-H-T}, we can show the minimal Sullivan model for $(\Lambda U\otimes \Lambda V^{<N},\partial)$ is $(\Lambda(x,t),dt=x^2)$, and the minimal Sullivan model for $(\Lambda U\otimes \Lambda V^{<N},\partial_1)$ is $(\Lambda(x,t),dx=dt=0)$. By direct computation, we have 
    $$H(\Lambda U\otimes \Lambda V^{<N},\partial)=\Q\oplus \Q x, \ \ H(\Lambda U\otimes \Lambda V^{<N},\partial_1)=\Lambda(x,t).$$

\end{proof}

\subsection{Injectivity of the constant part}\label{sub2}

\

Recall that 
\begin{equation}\label{td}
H_k({\rm Der}'(\Lambda V),D)=\left\{
\begin{array}{ll}
\Q y^\ast, &k=N,\\
\Q (y,x), &k=N-2,\\
0, &{\rm otherwise}.
\end{array}
\right.
\end{equation}
We first introduce a lifting of the exact derivation $(y,x^2)$ in the case $\delta=1$. 

\begin{lemma}\label{xx}
    If $\delta=1$, there exists a derivation $\eta$ of degree $N-3$ such that 
    $$D\eta=(-1)^N(y,x^2), \ \ L_{-1}(\eta)\neq 0. $$
\end{lemma}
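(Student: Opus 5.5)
First I would check that $(y,x^2)$ is a $D$-cycle, so that it is exact. It has degree $N-4$. Since $V^{>N}=0$, the generator $y$ does not occur in $dv$ for any $v\in V$. Hence $(y,x^2)\circ d=0$ on generators. Moreover $d\circ(y,x^2)$ sends $y$ to $d(x^2)=0$, because $dx=0$ for $x\in V^2$. So $D(y,x^2)=0$. As $N\geq 6$, we have $N-4\geq 2>1$, so by \eqref{td} the homology $H_{N-4}(\Der'(\Lambda V),D)$ vanishes. Thus there is some $\eta\in\Der_{N-3}(\Lambda V)$ with $D\eta=(-1)^N(y,x^2)$, the sign depending only on the convention $D=[d,-]$. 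It remains to show that every such $\eta$, or at least a suitable one, satisfies $L_{-1}(\eta)\neq 0$. I would argue by contradiction, assuming $L_{-1}(\eta)=0$.

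The detection tool is a linear functional. Because $\delta=1$, there is $\varphi\colon\Lambda^2V^2\to\Q$ with $\varphi(x^2)=1$ and $\varphi(V^2A+dV^3)=0$. Evaluate $D\eta$ at $y$:
$$\pm x^2 = d(\eta(y)) \mp \eta(dy).$$
Since $V^1=0$, the element $\eta(y)$ of degree $3$ lies in $V^3$. Therefore $d\eta(y)\in dV^3\subset V^2A$, and $\varphi$ kills it. Hence $\varphi(\eta(dy))=\pm1$, and the task reduces to showing $\varphi(L_2(\eta(dy)))=0$ whenever $\eta$ has no constant part. Only the degree-$4$, wordlength-$2$ component of $\eta(dy)$ matters here.

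To analyse $\eta(dy)$ I would split
$$dy=\sum_i \pm a_iu_i + R,$$
where $\{u_i\}$ is the basis of $V^{N-1}$ dual to $\{a_i\}$ via $D(V^{N-1})^\ast=(y,A)$, and $R\in \Lambda V^{<N-1}$. The terms $a_i\eta(u_i)$ lie in $V^2A$, so $\varphi$ kills them. The terms $\eta(a_i)u_i$ vanish for degree reasons. The real content is $\varphi(\eta(R))$.

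For this I would pass to the auxiliary algebra $(\Lambda U\otimes\Lambda V^{<N},\partial)$ of \eqref{u1}. The element $dy$ is a $\partial$-cycle of degree $N+1\geq 7$. By Lemma \ref{alg}(3) it is therefore $\partial$-exact: $dy=\partial\Phi$. Then $\eta$ with $L_{-1}(\eta)=0$ should be extended to a derivation $\tilde\eta$ on $\Lambda U\otimes\Lambda V^{<N}$ that also has no constant part and nearly commutes with $\partial$ modulo the ideal generated by $A$ and $dV^3$. The extension would be chosen inductively along the construction of $\partial$, using the relations $(D\eta)(v)=0$ for $v\neq y$. One then rewrites $\eta(dy)=\eta(\partial\Phi)$ as $\partial(\tilde\eta\Phi)$ plus correction terms. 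The $x^2$-component of $\partial(\tilde\eta\Phi)$ is governed by the $\partial_1$-cohomology $\Lambda(x,t)$, and specifically by the coefficient of the formal class $t$ with $\partial t=x^2$.

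The main obstacle is exactly this step: showing that no $t$-contribution can arise when $\tilde\eta$ has no constant part. The intended mechanism is that $t$ is the only positive-degree pure formal $\partial_1$-cycle (Lemma \ref{alg}(3)), and producing it from wordlength-increasing derivations applied to actual words would require a constant part. I would first try to prove this by a wordlength count in actual elements. Once $\varphi(\eta(dy))=0$ is established, it contradicts $\varphi(\eta(dy))=\pm1$, so $L_{-1}(\eta)\neq 0$ and the lemma follows.
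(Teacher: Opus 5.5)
Your reduction is correct up to the point where you isolate $\varphi(\eta(R))$ as ``the real content'', but the proposal stops being a proof there. The extension $\tilde\eta$ that ``nearly commutes'' with $\partial$ modulo an ideal is never made precise, and the claim that a $t$-contribution would force a constant part is never established. You flag this yourself as the unresolved main obstacle, so as written the contradiction $\varphi(\eta(dy))=0$ versus $\pm1$ is not reached.

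The missing idea is elementary degree bookkeeping, and it makes the auxiliary algebra unnecessary. A derivation $\eta$ of degree $N-3$ sends $v\in V^k$ into $(\Lambda V)^{k-N+3}$. For $k\le N-2$ this target has degree at most $1$. Since $V^1=0$, the image is $0$ unless $k=N-3$, and in that case $\eta(v)\in\Q$ is exactly the constant part. Hence $L_{-1}(\eta)=0$ forces $\eta|_{V^{\le N-2}}=0$, and then $\eta(R)=0$ by the Leibniz rule, because $R\in\Lambda V^{\le N-2}$. Your own decomposition now gives $\eta(dy)=\pm\sum_i a_i\eta(u_i)\in V^2A$, so $\varphi(\eta(dy))=0$, which contradicts $\varphi(\eta(dy))=\pm1$.

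This is precisely the paper's proof, phrased without $\varphi$. With no constant part, degree considerations put $\eta\in(V^{N-1},V^2)\oplus(y,V^3)$. Therefore $D\eta\in(y,V^2A+dV^3)=(y,V^2A)$, using $dV^3\subset V^2A$ when $\delta=1$. This cannot equal $\pm(y,x^2)$, because $V^2A\oplus\Q x^2=\Lambda^2V^2$. The argument also shows that every $\eta$ with $D\eta=(-1)^N(y,x^2)$ has nonzero constant part, so no special choice of $\eta$ is needed.
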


\begin{proof}
    The derivation $(y,x^2)$ is a cycle by direct computation. It is exact by (\ref{td}), which gives the derivation $\eta$ with $D\eta=(-1)^N(y,x^2)$. 

    Suppose that $L_{-1}(\eta)= 0$. For degree reasons, we have 
    $$\eta \in (V^{N-1},V^2)\oplus (y,V^3).$$
    Thus $D\eta= (-1)^N(y,x^2)\in (y,V^2A)+(y,dV^3)$, which contradicts the fact that 
    $$dV^3\subset V^2A,\ \ V^2A\oplus \Q x^2=\Lambda^2V^2.$$

\end{proof}

\begin{proposition}\label{prop}
    With notation as above. For the auxiliary Sullivan algebra $(\Lambda U\otimes \Lambda V^{<N},\partial)$, there is a right $(\Lambda V^{<N},d)$-module map 
    $$\Phi: (\Lambda U)^{\leq N-2}\otimes \Lambda V^{<N} \longrightarrow \Der(\Lambda V)$$
    sending an element of degree $k$ to a derivation of degree $N-k$, which satisfies 
    $$\Phi(1)= y^\ast,\ \ D\Phi=(-1)^N\Phi \partial,$$
    and, in the case $\delta=1$, $\Phi(t)=\eta$. Moreover, the restriction map 
    $$L_{-1}\Phi: (\Lambda U)^{\leq N-2}\longrightarrow (V)^\ast$$
    is injective. 

\end{proposition}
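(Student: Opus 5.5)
The plan is to build $\Phi$ inductively on a basis of formal monomials, ordered by degree and then by formal wordlength, and to extend it as a right $(\Lambda V^{<N},d)$-module map by the rule $\Phi(u\cdot v)=\Phi(u)\cdot v$. Two things have to be set up first:
\begin{itemize}
\item a sign check that $D(\theta\cdot v)=D\theta\cdot v\pm\theta\cdot dv$, so that the relation $D\Phi=(-1)^N\Phi\partial$ only needs to be verified on pure formal monomials;
\item the observation that $\Phi$ sends degree $k$ to degree $N-k$, and $D$ lowers degree by one.
\end{itemize}
Start with $\Phi(1)=y^\ast$.

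For a formal generator or monomial $u$ of degree $k\le N-2$, suppose $\Phi$ is already defined and satisfies $D\Phi=(-1)^N\Phi\partial$ on everything that appears in $\partial u$. Everything appearing there has formal part of lower degree or lower formal wordlength, since $\partial$ increases the actual wordlength by at least one (Lemma \ref{alg}(1)). Then $\Phi(\partial u)$ is a $D$-cycle, because $D\Phi(\partial u)=\pm\Phi(\partial^2u)=0$, and it has degree $N-k-1\ge 1$. The degree-one case needs a check: for $k=N-2$ the value $\Phi(u)$ lands in $\Der_2=\Der'_2$, and its boundary is automatically a cycle in $\Der_1$, so it lies in $\Der'$. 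By (\ref{td}), $H_{N-k-1}$ vanishes unless $N-k-1=N-2$, that is, unless $k=1$.
\begin{itemize}
\item For $k=1$ we have $\partial\overline{a_i}=a_i$. Hence $\Phi(\partial\overline{a_i})=y^\ast\cdot a_i=\pm(y,a_i)$, which by the definition of $A$ is $D$ of an element of $(V^{N-1})^\ast$. We take $\Phi(\overline{a_i})$ to be that element, up to sign.
\item For $k\ge2$ the cycle $\Phi(\partial u)$ is exact, and we choose any primitive as $\Phi(u)$.
\item In the case $\delta=1$, for $u=t$ we have $\partial t=x^2$. Then $\Phi(x^2)=\pm(y,x^2)$, and we put $\Phi(t)=\eta$ from Lemma \ref{xx}, with the signs matched.
\end{itemize}
This constructs $\Phi$.

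For injectivity of $L_{-1}\Phi$, suppose it fails and let $\omega\in(\Lambda U)^{\le N-2}$ be a nonzero pure formal element of minimal degree $k$ with $L_{-1}\Phi(\omega)=0$. Since $d_1=0$ by minimality, for any $\theta$ we get
$$L_0(D\theta)=[d_2,L_{-1}\theta]+[d_1,L_0\theta]=[d_2,L_{-1}\theta].$$
Hence $L_0(D\Phi(\omega))=0$, so $L_0\Phi(\partial\omega)=0$. Write
$$\partial\omega=\partial_1\omega+(\text{terms with at least two actual factors}),\qquad \partial_1\omega=\sum_i u_i'v_i,$$
with $u'_i$ pure formal and $v_i\in V$. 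The higher terms contribute nothing to $L_0$, because $\Phi(u'\cdot m)=\Phi(u')\cdot m$ raises the wordlength of $\Phi(u')$ by the wordlength of $m\ge2$. There are no pure formal terms, again because $\partial$ raises the actual wordlength. Therefore
$$0=L_0\Phi(\partial\omega)=\sum_i L_{-1}\Phi(u_i')\,v_i .$$
The $u_i'$ have degree less than $k$, so by minimality $L_{-1}\Phi$ is injective on them. It follows that $\partial_1\omega=0$, i.e. $\omega$ is a pure formal $\partial_1$-cycle of positive degree. Lemma \ref{alg}(2) then forces $\omega=0$ when $\delta=0$, and Lemma \ref{alg}(3) forces $\omega\in\Q t$ when $\delta=1$. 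The latter contradicts $L_{-1}(\eta)\ne0$ from Lemma \ref{xx}. The base case $k=1$ is the injectivity of $D_{N-1}$.

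I expect the main obstacle to be this injectivity argument, specifically the step from $\sum_i L_{-1}\Phi(u'_i)v_i=0$ to $\partial_1\omega=0$. The map $\Hom(V,\Q)\otimes V\to\Hom(V,V)$ is injective, so independence of the $L_{-1}\Phi(u_i')$ lets one cancel the $v_i$. But the $u'_i$ must be collected so that their images under $L_{-1}\Phi$ are genuinely independent. This requires writing $\partial_1\omega$ in a basis of formal monomials tensored with $V$ and applying the minimality hypothesis to the whole lower-degree span rather than to single elements. I would also double-check the sign conventions in the module action at this point.
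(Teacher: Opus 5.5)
Your proof is correct and follows the paper's own argument. Both construct $\Phi$ degree by degree, choosing primitives via (\ref{td}) and forcing $\Phi(t)=\eta$, and both prove injectivity by reducing $L_{-1}\Phi$ to $L_0\Phi\,\partial_1$, identifying this with $\pm\big((L_{-1}\Phi)\otimes\mathrm{id}\big)\partial_1$, and invoking Lemma \ref{alg} together with $L_{-1}(\eta)\neq 0$; you phrase this as a minimal counterexample where the paper uses induction on degree. One notational point: your ``$d_1=0$'' indexes $d$ by output wordlength, whereas in the paper's convention $d_l=L_l(d)$ the vanishing part is $d_0$, so the identity reads $L_0(D\theta)=[d_1,L_{-1}\theta]$.
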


\begin{proof}
    In order to construct the right $\Lambda V^{<N}$-module map $\Phi$, it suffices to define $\Phi$ on $(\Lambda U)^{\leq N-2}$. For the basis $\{a_i\}$ of $A$, the derivation $(-1)^N(y,a_i)$ is exact by (\ref{td}), and thus we can set 
    $$D\theta_i=(-1)^N(y,a_i). $$
    First, we define $\Phi$ on $(\Lambda U)^{\leq 1}=\Q \oplus\overline{A}$ by setting 
    $$\Phi(1)=y^\ast,\ \ \Phi(\overline{a_i})=\theta_i.$$ 
    
    Suppose that we have defined $\Phi$ on $(\Lambda U)^{<k}$ for some $2\leq k\leq N-2$. Recall that $\partial$ increases wordlength of actual elements by at least one, and actual elements have degree at least two. For a basis element $u\in (\Lambda U)^k$, we have
    $$\partial(u)\in (\Lambda U)^{<k}\otimes \Lambda V^{<N},$$
    on which $\Phi$ has been defined. Thus, $(-1)^N\Phi\partial(u)$ is a cycle derivation, which is exact by (\ref{td}) and $|\partial(u)|=k+1\geq 3$. Set 
    $$(-1)^N\Phi\partial(u)=D(\xi_u).$$
    In particular, we require $\xi_u=\eta$ for $u=t$ in the case $\delta=1$. Then, we define $\Phi$ on $(\Lambda U)^k$ by setting 
    $$\Phi(u)=\xi_u.$$ 
    By induction, this completes the definition of $\Phi$ on $(\Lambda U)^{\leq N-2}\otimes \Lambda V^{<N}$. According to the construction, we have $D\Phi=(-1)^N\Phi \partial$. 

    Finally, we show the injectivity of $L_{-1}\Phi|_{(\Lambda U)^{\leq N-2}}$ by induction on degrees. Since $\Phi(1)=y\ast$ and $L_{-1}(y^\ast)\neq 0$, $L_{-1}\Phi$ is injective on $(\Lambda U)^0$. Suppose we have shown that $L_{-1}\Phi$ is injective on $(\Lambda U)^{<k}$ for some $1\leq k\leq N-2$. By wordlength arguments, in order to prove the injectivity of $L_{-1}\Phi|_{(\Lambda U)^{k}}$, it suffices to show that $L_0D\Phi|_{(\Lambda U)^{k}}$ is injective. Since 
    $$\im(\partial-\partial_1)\subset  (\Lambda U)^{<k}\otimes \Lambda^{\geq 2} V^{<N},\ \ \ L_0\Phi\big( (\Lambda U)^{<k}\otimes \Lambda^{\geq 2} V^{<N}  \big)=0,$$
    we can derive that 
    $$(-1)^NL_0D\Phi=L_0 \Phi \partial =L_0 \Phi \partial_1.$$
    Recall that $\partial_1$ increases wordlength of actual elements by exactly one, and actual elements have degree at least two. Thus in the restriction of the composition map 
    $$(\Lambda U)^k\xrightarrow{\ \partial_1 \ } (\Lambda U)^{<k}\otimes V^{<N} \xrightarrow{\  L_0\Phi \ } \Hom(V,V),$$
    the map $L_0\Phi:(\Lambda U)^{<k}\otimes V^{<N}\rightarrow \Hom(V,V)$ can be identified with 
    $$(L_{-1}\Phi)\otimes {\rm id}:(\Lambda U)^{<k}\otimes V^{<N}\rightarrow (V)^\ast \otimes V^{<N} $$
    up to sign, which is injective by the induction hypothesis. Moreover, $\partial_1|_{(\Lambda U)^k}$ is also injective except on $\Q t$ by Lemma \ref{alg}. Hence we need to check the case when $\delta=1$ and $k=3$ separately. 
    
    In that case, by (\ref{u1}) we deduce that 
    $$(\Lambda U)^k=(\Lambda U)^3=\Lambda^3\overline{A}\oplus \overline{V^3} \overline{A} \oplus \overline{V^4}\oplus \Q t. $$
    Suppose that $L_{-1}\Phi (u_1+u_2+u_3+\lambda t)=0$ with 
    $$u_1\in \Lambda^3\overline{A}, \ \ u_2\in \overline{V^3} \overline{A}, \ \ u_3\in   \overline{V^4}, \ \ \lambda\in \Q.$$
    By similar arguments as above, it follows that $L_{0}\Phi\partial_1(u_1+u_2+u_3)=0$, which implies $u_1=u_2=u_3=0$ and $\lambda L_{-1}\Phi ( t)=0$. By construction, $\Phi(t)=\eta$. By Lemma \ref{xx}, $L_{-1}(\eta)\neq 0$, which implies $\lambda=0$. This completes the proof of injectivity of $L_{-1}\Phi|_{(\Lambda U)^{k}}$. By induction, we finally reach the conclusion that $L_{-1}\Phi: (\Lambda U)^{\leq N-2}\rightarrow (V)^\ast$ is injective.

\end{proof}

Proposition \ref{prop} imposes strong restrictions on the original minimal Sullivan model $(\Lambda V,d)$, and a simple application can rule out the case $\delta=1$, as demonstrated below. 

\begin{lemma}\label{pc}
    The case $\delta=1$ is impossible for $N\geq 6$. 
\end{lemma}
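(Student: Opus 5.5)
The plan is to turn the injectivity statement of Proposition \ref{prop} into a dimension count, and to show that in the case $\delta=1$ this count leaves no room for the decomposable elements of $\Lambda U$ that are forced to exist once $N\geq 6$. Since $\Phi$ sends an element of degree $k$ to a derivation of degree $N-k$, the constant part $L_{-1}\Phi\big((\Lambda U)^k\big)$ lies in $(V^{N-k})^\ast$. Injectivity of $L_{-1}\Phi$ on $(\Lambda U)^{\leq N-2}$ then gives, for every $1\leq k\leq N-2$,
$$\dim (\Lambda U)^k\leq \dim V^{N-k}.$$

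First I will sum these inequalities over $k=1,\dots,N-2$. The right-hand side becomes $\sum_{j=2}^{N-1}\dim V^j$. On the left I only keep the indecomposable part $U^k\subset(\Lambda U)^k$. By (\ref{u1}),
$$\sum_k \dim U^k=\dim A+\dim V^3+(\dim V^4+1)+\sum_{j\geq 5}^{N-1}\dim V^j .$$
Lemma \ref{codim} gives $\dim A+1=\dim V^2$, so this is exactly $\sum_{j=2}^{N-1}\dim V^j$. Hence every inequality is an equality, $(\Lambda U)^k=U^k$ for all $1\leq k\leq N-2$, and $\Lambda^{\geq 2}U$ has no nonzero element of degree $\leq N-2$.

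Next I will produce a nonzero decomposable element of small degree. If $\dim A\geq 2$, then $\overline{a_1}\,\overline{a_2}$ is a nonzero element of degree $2\leq N-2$. If $A\neq 0$, then $t\,\overline{a_1}$ is a nonzero element of degree $4\leq N-2$, and this uses $N\geq 6$. If $V^3\neq 0$, then $t\,\overline{v}$ with $v\in V^3$ has degree $5\leq N-2$. Each of these contradicts the previous step.

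It remains to handle the case $A=0$, $V^3=0$. Then $V^2=\Q x$ and $\dim V^{N-1}=\dim A=0$. The count gives $\dim U^k=\dim V^{N-k}$ for each $k$. In particular $\dim V^{N-3}=\dim U^3=\dim V^4+1\geq 1$, consistent with $L_{-1}(\eta)\neq 0$ from Lemma \ref{xx}.

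This last case is the step I expect to be the main obstacle. Here the only obvious decomposables are $t\cdot U^{j}$ with $j\leq N-5$, and these may all vanish when $U^{\leq N-5}=0$. My first attempt would be to iterate the equalities $\dim U^k=\dim V^{N-k}$ together with $\dim U^k=\dim V^{k+1}$ for $k\geq 4$. Combined with $V^2=\Q x$, $V^3=0$ and $V^{N-1}=0$, this forces the symmetry $\dim V^{j}=\dim V^{N+1-j}$, up to the shift caused by $t$. I expect this to give a contradiction from $\dim V^{N-3}=\dim V^4+1$ together with the symmetric relation between $V^4$ and $V^{N-3}$; checking whether the exponents pair off exactly is what I am unsure of.

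If that counting does not close, I would return to the lifting $\eta$. The minimal model is then concentrated in degrees $2$ and $\geq 4$, so $L_{-1}(\eta)\in (V^{N-3})^\ast$ must satisfy $D\eta=(-1)^N(y,x^2)$. Comparing wordlength-one parts, this should force some $v\in V^{N-3}$ with $dy$ containing a term $x^2 v$, and hence $x^2 v$ would have to be $d$-compatible. I would then derive the contradiction from $d^2y=0$ and minimality, using that $V^3=0$ kills all possible correction terms.
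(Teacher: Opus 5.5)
Your first step is correct. Summing the bounds $\dim(\Lambda U)^k\le\dim V^{N-k}$, which come from the injectivity of $L_{-1}\Phi$, against $\sum_{k=1}^{N-2}\dim U^k=\sum_{j=2}^{N-1}\dim V^j$ does force $\dim U^k=\dim(\Lambda U)^k=\dim V^{N-k}$ for every $1\le k\le N-2$.

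But the proposal never reaches a contradiction. The residual case $A=0$, $V^3=0$ is left as an expectation. The fallback you offer would not work either, because $d^2y=0$ and minimality carry too little information. For instance, take $N=7$ and the minimal algebra $\Lambda(x_2,v_4,y_7)$ with $dy=x^2v$. It has $A=0$, $V^3=0$ and $\delta=1$, and $\eta=v^\ast$ satisfies $D\eta=(-1)^N(y,x^2)$ with $L_{-1}(\eta)\neq 0$. Any contradiction therefore has to come from the derivation homology (\ref{td}), that is, from the injectivity you already used. There is also a slip for $N=6$: $t\,\overline{v}$ has degree $5>N-2$, so you should use $\overline{v}^{\,2}$, of degree $4$, instead.

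The missing step is immediate. It is in fact the paper's entire proof, which needs neither your summation nor your case split; it uses only two of your inequalities.

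\begin{itemize}
\item At $k=3$ you have $\dim V^4+1=\dim U^3\le\dim V^{N-3}$.
\item At $k=N-4\ge 2$, (\ref{u1}) gives $\dim V^{N-3}\le\dim U^{N-4}$. For $N\ge 8$ this holds because $U^{N-4}=\overline{V^{N-3}}$; for $N=7$ because $\overline{V^4}\subset U^3$; for $N=6$ because $U^2=\overline{V^3}$. Hence $\dim V^{N-3}\le\dim(\Lambda U)^{N-4}\le\dim V^4$.
\end{itemize}

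These two inequalities are incompatible. So the pairing you were unsure about closes exactly. For $N\ge 8$, your own relation $\dim V^j=\dim V^{N+1-j}$ at $j=N-3$ gives $\dim V^{N-3}=\dim V^4$, which contradicts $\dim V^{N-3}=\dim V^4+1$. The cases $N=6,7$ are settled by the same two degrees directly.
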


\begin{proof}
    In the case $\delta=1$, using the injectivity in Proposition \ref{prop}, we have 
    $$\dim V^4+1\leq \dim (\Lambda U)^3\leq \dim(V^{N-3})^\ast=\dim V^{N-3},$$
    On the other hand, since $N-4\geq 2$, it follows that $\dim V^{N-3}\leq \dim (\Lambda U)^{N-4}$ by (\ref{u1}). Thus we also have 
    $$\dim V^{N-3}\leq \dim (\Lambda U)^{N-4}\leq \dim(V^{4})^\ast =\dim V^4.$$
    These two inequalities lead to a contradiction.

\end{proof}

\subsection{Dimension estimates for the case $\delta=0$}\label{sub3}

\ 

In order to handle the only possible case $\delta=0$ for $N=n+1\geq 6$, we first introduce the following map 
\begin{equation*}
    \begin{split}
        \Sigma:(V^{<N})^\ast&\longrightarrow V^{<N}  \\
        \theta  &\longrightarrow \big(L_{0}(D\theta)\big)(y),
    \end{split}
\end{equation*}
which sends a derivation of degree $k$ to an element of degree $N+1-k$. 

\begin{proposition}\label{sig}
    For the map $\Sigma $ defined as above, we have 
    \begin{enumerate}
        \item $\theta_1 (\Sigma\theta_2)=(-1)^{|\theta_1|+|\theta_2|+|\theta_1||\theta_2|} \theta_2 (\Sigma\theta_1)$ for $\theta_1,\theta_2\in (V^{<N})^\ast$; 
        
        \item $\Sigma L_{-1}\Phi|_{U}$ coincides with the desuspension of $U$ up to sign; 

        \item $\im(\Sigma)=\widetilde{U}$ and $\ker(\Sigma)= \Q x^\ast \oplus \Q q^\ast $, where $\widetilde{U}\subset V^{<N}$ denotes the desuspension of $U$. 
    \end{enumerate}
    
\end{proposition}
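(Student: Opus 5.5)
The plan is to first identify $\Sigma$ with contraction against the quadratic part of $dy$. Since $(\Lambda V,d)$ is minimal, $D=[d,-]$ raises wordlength by at least one. Hence for any $\theta$, $L_0(D\theta)=L_0(D\,L_{-1}\theta)$.

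For $\theta\in (V^{<N})^\ast$ we have $\theta(y)=0$, so
$$(D\theta)(y)=\pm\theta(dy),\qquad \Sigma\theta=\pm\,\theta\big(L_2(dy)\big).$$
Here the sign depends only on $|\theta|$ and $N$.

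For (1), I would use that two derivations with only a constant part, $\theta_1,\theta_2\in(V^{<N})^\ast$, graded-commute as operators on $\Lambda V$: their bracket lowers wordlength by two and kills generators, so it vanishes. Applying $\theta_1\theta_2=(-1)^{|\theta_1||\theta_2|}\theta_2\theta_1$ to $L_2(dy)$ and keeping track of the signs in $\Sigma\theta_i=\pm\theta_i(L_2dy)$ gives the stated identity. So $\Sigma$ is the graded-symmetric "Hessian" of $L_2(dy)$.

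For (2), take $u\in U$ of degree $k\ge1$ and write $\partial u=\widetilde u-p'_u$. Here $p'_u$ has actual wordlength $\ge1$ and is decomposable, as in the proof of Lemma \ref{alg}. Then
$$\Sigma L_{-1}\Phi(u)=L_0(D\Phi(u))(y)=\pm L_0\Phi(\partial u)(y).$$
We have $\Phi(\widetilde u)=y^\ast\cdot\widetilde u$, which sends $y\mapsto\pm\widetilde u$. A term $u'\cdot v$ of $p'_u$ with $u'$ formal of positive degree contributes $L_{-1}\Phi(u')\cdot v$ to $L_0$, and this kills $y$ because $|\Phi(u')|<N$. Terms with actual wordlength $\ge2$, or with $u'=1$ and decomposable actual part, have no $L_0$ component. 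This gives $\Sigma L_{-1}\Phi(u)=\pm\widetilde u$.

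For (3), (2) immediately gives $\widetilde U\subset\im\Sigma$. Since $V^2=A\oplus\Q x$, $V^3=B\oplus\Q q$ and $\widetilde U=A\oplus B\oplus V^{4}\oplus\dots\oplus V^{N-1}$, the subspace $\widetilde U$ has codimension $2$ in $V^{<N}$. So it suffices to prove $\im\Sigma\subset\widetilde U$; the kernel statement then follows from (1). Indeed, by (1) the kernel of a graded-symmetric form is the annihilator of its image, and the annihilator of $\widetilde U$ is $\Q x^\ast\oplus\Q q^\ast$.

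For the inclusion $\im\Sigma\subset\widetilde U$, I would argue degreewise. The element $x$ can occur in $\Sigma\theta$ only for $\theta\in(V^{N-1})^\ast$. Since $\dim V^{N-1}=\dim A$ and $L_{-1}\Phi$ is injective (Proposition \ref{prop}), we get $(V^{N-1})^\ast=L_{-1}\Phi(\overline A)$. Then (2) gives $\Sigma\big((V^{N-1})^\ast\big)=A$.

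The main obstacle is the component along $q$, i.e. showing $\Sigma\big((V^{N-2})^\ast\big)\subset B$. By (1) the map $(V^{N-2})^\ast\to V^3$ is adjoint to $(V^3)^\ast\to V^{N-2}$, so the two have the same rank. As $N-3\ge3$, we have $U^{N-3}=\overline{V^{N-2}}$, and (2) shows the second map is onto $V^{N-2}$, so the rank is $\dim V^{N-2}$.

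On the other side, injectivity of $L_{-1}\Phi$ on $(\Lambda U)^2=\overline B\oplus\Lambda^2\overline A$ gives $\dim V^{N-2}\ge\dim B+\dim\Lambda^2\overline A$. I would compute $\Sigma L_{-1}\Phi$ on $\Lambda^2\overline A$ by the same argument as in (2). This uses $\partial(\overline{a_i}\,\overline{a_j})=a_i\overline{a_j}\pm\overline{a_i}a_j$, whose $\Phi$-images kill $y$ in $L_0$, so $\Lambda^2\overline A$ maps into $\ker\Sigma$.

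If $\dim V^{N-2}=\dim B+\dim\Lambda^2\overline A$, the rank count then forces $\Sigma\big((V^{N-2})^\ast\big)=B$, which is what we need. The equality I expect to get by running the same dimension comparison from the complementary degree and appealing to symmetry. Whether this count closes cleanly, or instead requires the exactness of $(y,x^2)$ together with $dq=x^2+\sum v_ia_i$ to rule out a $q$-component directly, is the point I expect to need the most care.
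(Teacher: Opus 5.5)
\textbf{There is a genuine gap, at exactly the step you flag.} The rest is fine. Your treatment of (1) and (2), and the reduction of the kernel statement to $\im\Sigma=\widetilde U$ via (1), work. The inclusion $\Sigma\big((V^{N-1})^\ast\big)\subset A$ also works; it is in fact immediate from the definition of $A$, since $D\theta=(y,\Sigma\theta)$ for $\theta\in(V^{N-1})^\ast$, so Proposition~\ref{prop} is not needed there.

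The missing step is $\Sigma\big((V^{N-2})^\ast\big)\subset B$. Your rank argument correctly shows that $\Sigma|_{(V^{N-2})^\ast}$ is injective, and by (2) its image contains $B$. Hence the desired inclusion is \emph{equivalent} to $\dim V^{N-2}=\dim B$.

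The counting you propose cannot deliver this. Injectivity of $L_{-1}\Phi$ on $(\Lambda U)^2=\overline B\oplus\Lambda^2\overline A$ and on $U^{N-3}=\overline{V^{N-2}}$ only gives $\dim B+\dim\Lambda^2\overline A\le\dim V^{N-2}\le\dim V^3=\dim B+1$. Nothing in (1), (2) or the injectivity of $L_{-1}\Phi$ rules out $\dim V^{N-2}=\dim B+1$. In that case $\Sigma\colon(V^{N-2})^\ast\to V^3$ would be bijective and $q\in\im\Sigma$. If by ``symmetry'' you mean $\alpha_2=\alpha_{N-3}$, that is Lemma~\ref{lm1}, which the paper deduces \emph{from} this proposition, so invoking it would be circular.

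\textbf{The fix.} What is missing is an argument that actually uses the defining property of $q$ in the case $\delta=0$: $dq\equiv x^2$ modulo $V^2A$, with $x^2\notin V^2A$. Apply it directly in $\Der_{N-3}(\Lambda V)$.

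For $\theta\in(V^{N-2})^\ast$, degree reasons give $D\theta\in(y,V^3)\oplus(V^{N-1},V^2)$, a purely linear derivation. So $\Sigma\theta=(D\theta)(y)$ is just its $(y,V^3)$-component. Now compare how $D$ acts on the three kinds of summands:
\begin{itemize}
\item For $b\in B$, $D(y,b)=\pm(y,db)\in(y,V^2A)$.
\item For $w\in V^{N-1}$ and $v\in V^2$, $D(w,v)=\pm\big(y,v\,w^\ast(dy)\big)\in(y,V^2A)$, because $w^\ast(dy)\in A$ by the very definition of $A$.
\item $D(y,q)=\pm(y,dq)\equiv\pm(y,x^2)$ modulo $(y,V^2A)$, and $x^2\notin V^2A$.
\end{itemize}
Since $D\theta$ is a cycle, its $(y,q)$-coefficient must vanish, i.e.\ $\Sigma\theta\in B$. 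With this step supplied, the rest of your argument for (3) goes through.
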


\begin{proof}
    (1) Since $\theta_1(y)=\theta_2(y)=0$ and $[\theta_1,\theta_2]=0$, by direct computation we have 
    \begin{equation*}
        \begin{split}
            0&=\big(D[\theta_1,\theta_2]\big)(y) \\
            &= \big([D\theta_1,\theta_2]+(-1)^{|\theta_1|} [\theta_1,D\theta_2]  \big)(y) \\
            &= -(-1)^{(|\theta_1|-1)|\theta_2|} \theta_2\big( (D\theta_1)(y) \big) +(-1)^{|\theta_1|} \theta_1\big( (D\theta_2)(y) \big) 
        \end{split}
    \end{equation*}
    By considering the linear part of $D\theta_1$ and $D\theta_2$, we conclude that 
    \begin{equation}\label{do}
        \theta_1 (\Sigma\theta_2)=(-1)^{|\theta_1|+|\theta_2|+|\theta_1||\theta_2|} \theta_2 (\Sigma\theta_1).
    \end{equation}

    (2) For $u\in U$, denote by $\widetilde{u}$ its desuspension. Note that 
    $$\widetilde{u} \in V^{<N} ,\ \ \partial u=\widetilde{u}-p^\prime_{\widetilde{u}}$$
    By direct computation we have 
    \begin{equation*}
        \begin{split}
            \Sigma L_{-1}\Phi(u)&= \big(L_{0} D\Phi(u)\big)(y)  \\
            & =(-1)^N \big(L_{0} \Phi(\partial u)\big)(y) \\
            &=(-1)^N \big(L_{0} \Phi(\widetilde{u}-p^\prime_{\widetilde{u}})\big)(y) . 
        \end{split}
    \end{equation*}
    Note that $p^\prime_{\widetilde{u}}\in \Lambda^{+}(U\oplus V^{<N}) \otimes V^{<N}$, which implies $\big(L_{0} \Phi(p^\prime_{\widetilde{u}})\big)(y)=0$. Thus we conclude that 
    \begin{equation*}
        \begin{split}
            \Sigma L_{-1}\Phi(u)&=(-1)^N\big(L_{0} \Phi(\widetilde{u})\big)(y) \\
            &=(-1)^N\big(y^\ast \cdot \widetilde{u}\big)(y) \\
            &=(-1)^{N+|y||\widetilde{u}|} \widetilde{u},
        \end{split}
    \end{equation*}
    which implies that $\Sigma L_{-1}\Phi|_{U}$ coincides with the desuspension of $U$ up to sign. 

    (3) For a fixed derivation $\theta_1\in (V^k)^\ast$, (\ref{do}) implies 
    \begin{equation*}
        \begin{split}
            \theta_1 \in \ker\Sigma  &\ \iff  \ \theta_2(\Sigma\theta_1)=0, \ \forall \theta_2\in (V^{N+1-k})^\ast  \\
            &\ \iff \  \theta_1(\Sigma\theta_2)=0, \ \forall \theta_2\in (V^{N+1-k})^\ast \\
            &\ \iff \ \theta_1(\im(\Sigma))=0. 
        \end{split}
    \end{equation*}
    Since 
    \begin{equation*}
        \widetilde{U}^k=\left\{
        \begin{array}{ll}
        A, &k=2,\\
        B, &k=3,\\
        V^{k}, &k=4,\dots, N-1, 
        \end{array}
        \right.
    \end{equation*}
    it suffices to prove $\im(\Sigma)=\widetilde{U}$. 
    
    According to the fact that $\Sigma L_{-1}\Phi|_{U}$ coincides with the desuspension of $U$ up to sign, we have $\widetilde{U}\subset \im(\Sigma)$. Thus we only need to check 
    $$\Sigma \big((V^{N-1})^\ast\big)\subset  A,\ \ \Sigma \big((V^{N-2})^\ast\big)\subset B.$$
    We note that 
    \begin{align*}
    \begin{split}
        \Der_{N-1}(\Lambda V) &= (V^{N-1})^\ast, \\
        \Der_{N-2}(\Lambda V) &= (V^{N-2})^\ast \oplus (y,A)\oplus \Q (y,x) ,\\
        \Der_{N-3}(\Lambda V) &= (V^{N-3})^\ast \oplus (y,B)\oplus \Q(y,q) \oplus (V^{N-1},V^2),
    \end{split}
    \end{align*}
    and the differential satisfies
    \begin{align*}
    \begin{split}
        &D_{N-1}: (V^{N-1})^\ast \xrightarrow{\cong} (y,A), \\
        &D_{N-2}: (V^{N-2})^\ast \xrightarrow{\cong} Z\big((y,B)\oplus \Q(y,q)\oplus (V^{N-1},V^2)\big), 
    \end{split}
    \end{align*}
    where $Z$ denotes the cycles in a complex. The fact that $D_{N-1}\big((V^{N-1})^\ast \big)= (y,A)$ implies $\Sigma \big((V^{N-1})^\ast\big)\subset  A$.  According to the fact that $V^3=B\oplus \Q q$ with $dB\subset V^2A$ and $dq \equiv x^2 \ ({\rm mod }\  V^2A)$, we have 
    $$D_{N-3}\big((y,B)\oplus (V^{N-1},V^2)\big)= (y, V^2 A),$$
    $$D_{N-3}(y,q)= (y,x^2) \ ({\rm mod }\  (y,V^2A)), $$
    which implies that no cycle derivation has a nonzero $(y,q)$-part. Therefore we conclude that 
    $$D_{N-2}\big( (V^{N-2})^\ast \big)\subset (y,B)\oplus (V^{N-1},V^2),$$
    which implies $\Sigma \big((V^{N-2})^\ast\big)\subset B$.

\end{proof}

Now we are in a position to estimate the dimensions of $(\Lambda U)^{\leq N-2}\otimes \Lambda V^{<N}$. For convenience, we set 
\begin{equation*}
    \begin{split}
        \alpha_k&=\dim U^k=\dim \widetilde{U}^{k+1}, \\
        \beta_k&=\dim (\Lambda^{\geq 2}U)^k=\dim (\Lambda U)^k-\dim U^k
    \end{split}
\end{equation*}
for $1\leq k\leq N-2$. Note that 
\begin{equation*}
    \begin{split}
        \alpha_1&=\dim A=\dim V^2-1, \\
        \alpha_2&=\dim B=\dim V^3-1, \\
        \alpha_k&=\dim V^{k+1}, \ \ \ \ {\rm for} \ \ 3\leq k\leq N-2.
    \end{split}
\end{equation*}
The following two important lemmas are immediate consequences of Propositions \ref{prop} and \ref{sig}. 

\begin{lemma}\label{lm1}
    For $1\leq k\leq N-2$, we have 
    $$\alpha_k=\alpha_{N-1-k}.$$

\end{lemma}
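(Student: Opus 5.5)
Since $\alpha_k=\dim U^k=\dim\widetilde{U}^{k+1}$, I will show $\dim\widetilde{U}^{k+1}=\dim\widetilde{U}^{N-k}$ using the map $\Sigma$ of Proposition \ref{sig}. The map $\Sigma$ sends $(V^{j})^\ast$, i.e. derivations of degree $j$, to $V^{N+1-j}$. By Proposition \ref{sig}(3), $\im(\Sigma)=\widetilde{U}$ and $\ker\Sigma=\Q x^\ast\oplus\Q q^\ast$. So for each $j$ with $2\le j\le N-1$ we get a surjection
$$\Sigma_j:(V^{j})^\ast\longrightarrow \widetilde{U}^{N+1-j}$$
whose kernel is $(\ker\Sigma)\cap (V^j)^\ast$.

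The kernel has dimension $1$ when $j=2$ (spanned by $x^\ast$), dimension $1$ when $j=3$ (spanned by $q^\ast$), and is $0$ otherwise. Comparing with the list of $\widetilde{U}^j$ (which is $A$, $B$, or $V^j$ for $j=2$, $3$, or $\ge4$), we get
$$\dim (V^j)^\ast-\dim\ker\Sigma_j=\dim\widetilde{U}^j$$
in every case $2\le j\le N-1$. Indeed $\dim A=\dim V^2-1$ and $\dim B=\dim V^3-1$. Hence rank--nullity gives $\dim\widetilde{U}^{N+1-j}=\dim\widetilde{U}^{j}$.

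Now put $j=k+1$ with $1\le k\le N-2$, so that $2\le j\le N-1$. Then
$$\alpha_{N-1-k}=\dim\widetilde{U}^{N-k}=\dim\widetilde{U}^{N+1-j}=\dim\widetilde{U}^{j}=\alpha_k.$$

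The only thing that needs care is the bookkeeping of degrees. One has to check that $\Sigma$ really maps $(V^j)^\ast$ into $V^{N+1-j}$: the derivation $D\theta$ has degree $j-1$, so its linear part sends $y$ (of degree $N$) to degree $N+1-j$. One also has to check that the kernel description from Proposition \ref{sig}(3) splits degreewise, which holds because $\Sigma$ is homogeneous. The substantive content is entirely in Proposition \ref{sig}. Alternatively, the same symmetry follows from the pairing in Proposition \ref{sig}(1): it shows that the bilinear form $(\theta_1,\theta_2)\mapsto\theta_1(\Sigma\theta_2)$ is graded-symmetric, so the rank of $\Sigma_j$ equals the rank of $\Sigma_{N+1-j}$. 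Either route gives the lemma.
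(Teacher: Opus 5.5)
Your proposal is correct and follows essentially the same argument as the paper. The paper also applies rank--nullity to $\Sigma$ on $(V^{k+1})^\ast$, using $\im\Sigma=\widetilde{U}$ and $\ker\Sigma=\Q x^\ast\oplus\Q q^\ast$ from Proposition~\ref{sig}(3), together with $\dim A=\dim V^2-1$ and $\dim B=\dim V^3-1$, to get $\dim\widetilde{U}^{k+1}=\dim\widetilde{U}^{N-k}$.
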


\begin{proof}
    According to Proposition \ref{sig}\,(3), we derive that 
    \begin{equation*}
        \begin{split}
            \alpha_k&= \dim \widetilde{U}^{k+1} \\
            &=\dim (V^{k+1})^\ast-\dim(\ker \Sigma)_{k+1} \\
            &= \dim \widetilde{U}^{N-k} \\
            &= \alpha_{N-1-k}
        \end{split}
    \end{equation*}
    for $1\leq k\leq N-2$. 
    
\end{proof}

\begin{lemma}\label{lm2}
    We have 
    \begin{equation*}
        \begin{split}
            &\beta_k=0,  \ \  \textit{ for } \ 1\leq k\leq N-4, \\
            &\beta_{N-3}\leq 1, \ \ \beta_{N-2}\leq 1.
        \end{split}
    \end{equation*}
    In particular, we have 
    \begin{enumerate}
        \item $\alpha_1\leq 1$; 

        \item Every nonzero element $u\in U^{even}$ satisfies $|u|\geq \frac{N-3}{2}$; 

        \item If $N$ is odd, then every nonzero element $u\in U^{even}$ also satisfies $|u|\leq \frac{N+1}{2}$.
    \end{enumerate}
    
\end{lemma}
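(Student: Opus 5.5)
The plan is to read off the bounds on $\beta_k$ by combining the injectivity of $L_{-1}\Phi$ from Proposition \ref{prop} with the description of $\ker\Sigma$ in Proposition \ref{sig}\,(3). For $u\in(\Lambda U)^k$, the derivation $\Phi(u)$ has degree $N-k$, so $L_{-1}\Phi(u)\in (V^{N-k})^\ast\subset (V^{<N})^\ast$ for $k\geq 1$, and $\Sigma$ can be applied to it.

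\textbf{Step 1: decomposables are killed by $\Sigma$.} The main claim is
$$\Sigma L_{-1}\Phi\big((\Lambda^{\geq 2}U)^k\big)=0.$$
I would prove it by repeating the computation in Proposition \ref{sig}\,(2): $\Sigma L_{-1}\Phi(u)=(-1)^N\big(L_0\Phi(\partial u)\big)(y)$. For a product $u=u_1\cdots u_r$ of formal generators with $r\geq2$, the Leibniz rule and $\partial u_i=\widetilde{u_i}-p'_{\widetilde{u_i}}$ show that every term of $\partial u$ still has a formal factor of positive degree. So $\partial u$ lies in $\Lambda^+U\otimes\Lambda V^{<N}$.
\begin{itemize}
\item Terms with actual wordlength at least $2$ contribute nothing to $L_0$.
\item A term of the form $u'v$, with $u'\in\Lambda^+U$ and $v\in V^{<N}$, gives $\big(\Phi(u')\cdot v\big)(y)=\pm\Phi(u')(y)\,v$. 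Its linear part is nonzero only if $\Phi(u')(y)$ is a constant. But $\Phi(u')(y)$ has degree $N-(N-|u'|)=|u'|>0$, so it is not.
\end{itemize}
Hence the claim holds. I expect this to be the only delicate point, mainly the bookkeeping of signs and wordlengths.

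\textbf{Step 2: the bounds on $\beta_k$.} By Step 1, $L_{-1}\Phi$ maps $(\Lambda^{\geq2}U)^k$ into $(\ker\Sigma)_{N-k}$, which by Proposition \ref{sig}\,(3) is $\Q x^\ast\oplus\Q q^\ast$ restricted to degree $N-k$.
\begin{itemize}
\item For $N-k\geq 4$, i.e.\ $k\leq N-4$, this space is $0$.
\item For $k=N-3$ it is $\Q q^\ast$.
\item For $k=N-2$ it is $\Q x^\ast$.
\end{itemize}
Since $L_{-1}\Phi$ is injective on $(\Lambda U)^{\leq N-2}$, we get $\beta_k=0$ for $k\leq N-4$, and $\beta_{N-3}\leq 1$, $\beta_{N-2}\leq1$.

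\textbf{Step 3: the consequences.}
\begin{enumerate}
\item If $\alpha_1\geq2$, take two independent elements of $U^1$. Since they have odd degree, their product is a nonzero element of $(\Lambda^2U)^2$, so $\beta_2\geq1$. This contradicts $\beta_2=0$, which holds because $N\geq6$ gives $2\leq N-4$.
\item If $u\in U^{even}$ is nonzero, then $u^2\neq0$ lies in $(\Lambda^2U)^{2|u|}$. So $2|u|\leq N-4$ is impossible, which gives $2|u|\geq N-3$.
\item Suppose $N$ is odd and $U^k\neq0$ with $k$ even and $k>\frac{N+1}{2}$. By Lemma \ref{lm1}, $\alpha_{N-1-k}=\alpha_k\neq0$. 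Here $N-1-k$ is even and positive, since $k\leq N-2$. Moreover $N-1-k<\frac{N-3}{2}$, which contradicts item (2).
\end{enumerate}
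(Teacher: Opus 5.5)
Your argument is correct, but it reaches the bounds on $\beta_k$ by a different route than the paper.

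The paper uses a pure dimension count. Injectivity of $L_{-1}\Phi$ gives $\dim(\Lambda U)^k\le\dim V^{N-k}$. Lemma \ref{lm1} (the symmetry $\alpha_k=\alpha_{N-1-k}$) then rewrites $\dim V^{N-k}$ as $\alpha_k$, or as $\alpha_k+1$ when $N-k\in\{2,3\}$, and subtracting $\alpha_k$ yields the bound on $\beta_k$.

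You instead prove that $\Sigma L_{-1}\Phi$ vanishes on $\Lambda^{\geq 2}U$, which is the counterpart for decomposables of Proposition \ref{sig}\,(2). You then feed this directly into $\ker\Sigma=\Q x^\ast\oplus\Q q^\ast$ from Proposition \ref{sig}\,(3). Compared with the paper's count:
\begin{itemize}
\item Your route avoids Lemma \ref{lm1} for the $\beta$-bounds; you need it only in item (3), as the paper does there.
\item It gives finer information: under $L_{-1}\Phi$, decomposables of degree $\le N-2$ can only land in $\Q x^\ast\oplus\Q q^\ast$.
\item The paper's count is shorter, since it reuses Lemma \ref{lm1} and needs no new wordlength computation.
\end{itemize}

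One point in Step 1 should be made explicit. Your two bullets only treat terms of $\partial u$ with actual wordlength $\ge 1$. A purely formal term $c\,u'$ in $\partial u$ would contribute $c\,\big(L_0\Phi(u')\big)(y)$, which need not vanish. Such terms are excluded because $\partial$ raises the wordlength of actual elements by at least one. This is Lemma \ref{alg}\,(1), and equivalently $p'_{\widetilde{u_i}}\in\Lambda^+(U\oplus V^{<N})\otimes V^{<N}$, as used in the proof of Proposition \ref{sig}\,(2).

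So every term of $\partial u$ has both a formal factor of positive degree and at least one actual factor. Your two bullets then cover all cases, and Steps 2 and 3 go through as written.
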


\begin{proof}
    According to Proposition \ref{prop}, the map $L_{-1}\Phi|_{(\Lambda U)^{\leq N-2}}$ is injective. Thus for $1\leq k\leq N-4$, we have
    $$\dim (\Lambda U)^k \leq \dim (V^{N-k})^\ast= \dim U^{N-k-1}=\dim U^k,$$
    $$\dim (\Lambda U)^{N-3} \leq \dim (V^{3})^\ast= \dim U^{2}+1=\dim U^{N-3}+1,$$
    $$\dim (\Lambda U)^{N-2} \leq \dim (V^{2})^\ast= \dim U^{1}+1=\dim U^{N-2}+1.$$
    It follows that for $1\leq k\leq N-4$, 
    \begin{equation*}
        \begin{split}
            \beta_k&=\dim (\Lambda U)^k-\dim U^k\leq 0, \\
            \beta_{N-3}&=\dim (\Lambda U)^{N-3}-\dim U^{N-3}\leq 1, \\
            \beta_{N-2}&=\dim (\Lambda U)^{N-2}-\dim U^{N-2}\leq 1. 
        \end{split}
    \end{equation*}

    In particular, suppose that $\alpha_1\geq 2$. Then there exist two linearly independent elements $u_1,u_2\in U^1$. Thus $u_1u_2$ is a nonzero element in $(\Lambda^{\geq2}U)^2$, which contradicts the fact that $\beta_2=0$ for $N\geq 6$. 

    For the nonzero element $u\in U^{even}$, since $0\not= u^2\in (\Lambda^{\geq2}U)^{2|u|}$, it follows that 
    $$2|u|\geq N-3.$$
    Furthermore, if $N$ is odd, then $\alpha_{N-1-|u|}=\alpha_{|u|}\geq1$. Thus there exists a nonzero element in $U^{even}$ of degree $N-1-|u|$. Similarly, we have 
    $$2(N-1-|u|)\geq N-3,$$
    which implies $2|u|\leq N+1$. 
    
\end{proof}

\begin{remark}\label{rm}
    Lemma \ref{lm2}\,(3) asserts that if $N$ is odd, the only possible even degrees of nonzero elements in $U^{even}$ are 
    \begin{equation*}
        \left\{  \begin{array}{ll}
        \frac{N-1}{2} ,  &  \ N\equiv 1 \  ({\rm mod}\ 4),\\
        \frac{N-3}{2},  \frac{N+1}{2},   &  \ N\equiv 3 \  ({\rm mod}\ 4). 
        \end{array}   \right.
    \end{equation*}
    
\end{remark}

The following observation will be used repeatedly. 

\begin{lemma}\label{q}
    For any choice of generators for $V$, it is impossible that none of the differentials of generators contains $q$. In particular, it is impossible that all the generators other than $q,y$ have even degrees. 
    
\end{lemma}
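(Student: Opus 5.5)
Suppose, for a contradiction, that $V$ has generators none of whose differentials involve $q$. The idea is that the derivation $q^\ast$ would then give an extra homology class in $H(\Der'(\Lambda V),D)$, which the computation (\ref{td}) does not allow.

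First I check that $q^\ast$ is a cycle. We have
$$(Dq^\ast)(v)=[d,q^\ast](v)=d(q^\ast v)\pm q^\ast(dv).$$
The first term vanishes because $q^\ast(v)$ is a scalar or zero. The second vanishes because no $dv$ contains $q$. So $Dq^\ast=0$, and $q^\ast$ has degree $3$.

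Next I show $q^\ast$ is not a boundary. Here $3=N-2$ is impossible since $N\geq 6$, and $3\neq N$. So (\ref{td}) forces $q^\ast=D\zeta$ for some $\zeta\in\Der_4(\Lambda V)$. Since $D$ increases wordlength by at least one, the constant part of $D\zeta$ comes only from $L_{-2}$, which is zero. Hence $L_{-1}(D\zeta)=0$, whereas $L_{-1}(q^\ast)=q^\ast\neq 0$. This contradiction proves the first claim.

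It remains to make sure the claim is independent of the choice of generators. A change of basis sends $q$ to $q+(\text{decomposables})+B$-terms. I will phrase the argument intrinsically: $q$ is any generator of $V^3$ outside $B$, and $q^\ast$ is defined relative to the chosen complement. The argument above works for whichever basis is fixed, so this should be a routine check.

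For the second claim, suppose every generator other than $q$ and $y$ has even degree. Then every generator has degree at least $2$, and $q$ is the only odd generator of degree $3$. If $v$ has even degree, $dv$ has odd degree, so each monomial in $dv$ contains an odd generator, namely $q$ or $y$. But $|y|=N$ exceeds $|dv|-2$ unless $|v|\ge N+1$, which is impossible; so any odd monomial of degree below $N+2$ contains $q$, and the hypothesis does not immediately rule this out. Instead I would use $dq=Q\ne 0$ and $dy$. The differential $dy$ has even degree $N+1$, and its monomials are either even-only or contain $q$ squared, which is zero since $q$ is odd. So $q$ can appear only in differentials of even generators, each linearly in $q$. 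This is consistent, so the real content must come from elsewhere.

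The right route is the minimality of the model together with $H_*(\Der')$. I expect to show that, for parity reasons, the even generators $v$ have $dv=q\cdot c_v$, and that one can modify $q^\ast$ by derivations $(v,c'_v)$ to produce a cycle with nonzero constant part. This main obstacle I would attack using the injectivity structure of Proposition \ref{sig}(3), which gives $q^\ast\in\ker\Sigma$. The difficulty is reaching a contradiction with $q^\ast$ being homologous to a boundary.
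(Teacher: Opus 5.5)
Your treatment of the first claim is correct and matches the paper, which simply calls it obvious. You show that $q^\ast$ is a $D$-cycle of degree $3$. It cannot be a boundary: $D$ raises wordlength, so $L_{-1}(D\zeta)=0$, while $L_{-1}(q^\ast)\neq 0$ and $H_3=0$ for $N\geq 6$. The paragraph on changes of basis is unnecessary: the statement concerns a fixed basis, and $q^\ast$ is taken relative to it.

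The second claim, however, is not proved. You correctly observe that each even generator $v$ has $dv=q\,c_v$, where $c_v$ is a polynomial in even generators of degree $<|v|$. You then call this configuration ``consistent'' and propose a detour through corrected derivations and $\ker\Sigma$, which you never complete. It is not consistent. The missing idea is just $d^2=0$ combined with induction on degree.

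Degree-$2$ generators are closed. Suppose all even generators of degree $<|v|$ are closed. Then $dc_v=0$, so $0=d^2v=(dq)\,c_v$. Here $dq=Q=x^2+\sum v_ia_i\neq 0$; we are in the case $\delta=0$, since $\delta=1$ was excluded by Lemma \ref{pc}. Both $dq\in\Lambda^2V^2$ and $c_v$ lie in the polynomial algebra on the even generators, which is an integral domain. Hence $c_v=0$, and every even generator is closed.

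The same argument handles $y$, and here your assertion that $|dy|=N+1$ is always even is a slip. If $N$ is odd, $dy$ has even degree, so each monomial contains an even number of factors $q$, hence none, since $q^2=0$. If $N$ is even, then $dy=qF$ with $F$ a polynomial in the (now closed) even generators, and $0=d^2y=(dq)F$ forces $F=0$.

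Finally, $dq\in\Lambda^2V^2$ does not involve $q$. So no differential contains $q$, and the first claim gives the contradiction.
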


\begin{proof}
    The first statement is obvious, since $q^\ast$ is a non-exact cycle of degree $3$ and it contradicts (\ref{td}). 

    Suppose that all the generators other than $q,y$ have even degrees. Then induction on generator degrees shows that these even generators are closed: their differentials must have the form $qF$ for a polynomial $F$, and thus $0=(dq)F$, which forces $F=0$. Finally, we can conclude that $dy$ does not contain $q$ by similar arguments, which contradicts the first statement. 
    
\end{proof}

Recall that Lemma \ref{lm2}\,(1) asserts $\alpha_1\leq 1$. We now rule out $\alpha_1=1$. 

\begin{lemma}\label{1}
    It is impossible that $\alpha_1=1$.
    
\end{lemma}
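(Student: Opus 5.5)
The plan is to assume $\alpha_1=1$ (so $N\geq 6$ and $\delta=0$) and use Lemmas \ref{lm1} and \ref{lm2} to collapse $U$, and hence $V$, to a very small shape, then reach a contradiction from Lemma \ref{q} or from the homology (\ref{td}).

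\textbf{Collapsing $U$.} With $\alpha_1=1$ we have $A=\Q a$ and $V^2=\Q a\oplus\Q x$, and $U^1=\Q\overline{a}$ with $\overline{a}$ of odd degree $1$. For any nonzero $u\in U^{k-1}$ with $k-1\geq 2$, the product $\overline{a}u$ is a nonzero element of $(\Lambda^{\geq2}U)^k$. Since $\beta_k=0$ for $k\leq N-4$ (Lemma \ref{lm2}), this gives
$$U^j=0\quad\text{for } 2\leq j\leq N-5.$$
Lemma \ref{lm1} ($\alpha_k=\alpha_{N-1-k}$) then forces $U^j=0$ for $4\leq j\leq N-3$ as well, and gives $\alpha_{N-2}=\alpha_1=1$. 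Using also $\beta_{N-3},\beta_{N-2}\leq 1$ to control the products $\overline{a}\cdot U^{N-4}$ and $\overline{a}\cdot U^{N-3}$, we should obtain the following. For $N\geq 8$,
$$U=\Q\overline{a}\oplus\Q\overline{w},\qquad |\overline{w}|=N-2,$$
and correspondingly
$$V=\Q a\oplus\Q x\oplus\Q q\oplus\Q w\oplus\Q y,\qquad B=0,\ |w|=N-1.$$
For $N=6,7$ an extra summand $U^{N-4}=U^3$ (equivalently some $V^{N-3}$) may survive, with dimension at most one because of $\beta_{N-3}\leq1$. I will list these few small configurations separately.

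\textbf{Odd $N$.} Here $w$ has even degree $N-1$. In every configuration all generators other than $q$ and $y$ are then even; for $N=7$ the possible extra generator lies in $V^4$, which is even. This is exactly the situation excluded by Lemma \ref{q}, so this case is finished.

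\textbf{Even $N$.} Here $w$ is odd and this is the main obstacle. The plan is to write down the most general differential by degree counting: $dq=x^2+\lambda ax+\mu a^2$; $dw$ is a polynomial in $a,x,q$ of degree $N$; and $dy$ has odd degree $N+1$, lies in $\Lambda(a,x,q,w)$, and must involve $q$ by Lemma \ref{q}. Proposition \ref{sig} supplies rigid constraints. $\Sigma(w^\ast)$ is a nonzero multiple of $a$, and $\Sigma(a^\ast)$ is a nonzero multiple of $w$. So the quadratic part of $dy$ contains $aw$ with nonzero coefficient, while $x^\ast$ and $q^\ast$ lie in $\ker\Sigma$. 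By a change of basis I will normalize $dq=x^2$ (after possibly modifying $a$) and put $dy=caw+(\text{terms in }a,x,q)$.

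Then I will exploit that $(\Lambda(\overline{a},a,x,q,\overline{w},w),\partial)$ has cohomology $\Q\oplus\Q x$ (Lemma \ref{alg}(2)). This forces $dw=d(p)$ with $p\in\Lambda(\overline{a},a,x,q)$, which severely restricts $dw$, essentially to $dw\in a\cdot\Lambda(a,x,q)$ modulo boundaries. With this explicit form, the final step is to exhibit a cycle derivation in some degree other than $N$ and $N-2$ that is not a boundary, contradicting (\ref{td}). The first candidates I would try are a derivation of the form $q^\ast+(\ldots)$ or $(w,\text{something})$, or a combination $a^\ast+\lambda(y,\ldots)$.

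The case $N=6$ with a possible generator in $V^3$ or $V^4$ will be handled by the same explicit computation. I expect this even-$N$ derivation computation to be the delicate part of the proof.
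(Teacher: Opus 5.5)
Your collapse of $U$ and your odd-$N$ case follow the paper. For even $N$, however, which is the real content of the lemma, you do not have an argument, and the tool you plan to use cannot supply one. Lemma \ref{alg}(2) does not restrict $dw$ at all. With $\partial\overline{a}=a$ and $\partial q=Q$, the algebra $(\Lambda(\overline{a},a,x,q),\partial)$ already has cohomology $\Q\oplus\Q x$, so every $F(x,a)$ of degree $N\geq 6$ is a $\partial$-boundary there. That fact is what allows $\partial\overline{w}$ to be defined in the first place; it is not a constraint. After that step you are left guessing a non-bounding cycle derivation.

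Your normalization $dq=x^2$ is also not available in general. The form $Q=x^2+\lambda ax+\mu a^2$ is the square of a linear form only if $\lambda^2=4\mu$. No change of basis of $V^2$ or rescaling of $q$ changes the rank of this quadratic form. Changing $a$ would also destroy $\Q a=A$, which you need for the $aw$ term.

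The missing step is $d^2y=0$. Take $dq=Q$, $dw=F(x,a)$ and, as you correctly get from $\Sigma$, $dy=aw+qG(x,a)$. Then $d^2y=0$ gives $aF+QG=0$ in $\Q[x,a]$. Since $Q\equiv x^2\pmod{a}$, the elements $a$ and $Q$ are coprime, so $F=QR$ and $G=-aR$. The new generator $w'=w-qR$ satisfies $dw'=0$ and $dy=aw'$. Hence $q$ occurs in no differential, contradicting Lemma \ref{q}. Equivalently, in your original basis $q^\ast+(w,R)$ is a degree-$3$ cycle with nonzero constant part, hence not a boundary. So your first candidate was the right one, but only this factorization produces $R$ and proves that the derivation is a cycle.

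Finally, for $N=6$ you have $N-4=2$, not $3$. A priori both $U^2=\overline{B}$ and $U^3=\overline{V^4}$ may be nonzero, with $\alpha_2=\alpha_3$. Rather than redoing the computation with extra generators, exclude them as the paper does. Nonzero $u\in U^2$ and $u'\in U^3$ would give independent elements $u^2$ and $\overline{a}u'$ of $(\Lambda^{\geq2}U)^4$, contradicting $\beta_{N-2}\leq 1$.
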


\begin{proof}
    Suppose that $\alpha_1=1$, and denote $U^1=\overline{A}=\Q \overline{a}$. By Lemma \ref{lm2}, we have $\beta_k=0$ for $3\leq k\leq N-4$, which implies that there exists no nonzero element in $U^{k-1}$, since its product with $\overline{a}$ has degree $k$. By Lemma \ref{lm1}, it follows that 
    $$\alpha_1=\alpha_{N-2}=1,\ \ \alpha_k=\alpha_{N-1-k}=0$$ 
    for $2\leq k\leq N-5$. Note that $4\leq N-1-k\leq N-3$ for $2\leq k\leq N-5$. Thus when $N\geq 8$, these two intervals cover the degrees from $2$ to $N-3$, which implies 
    $$U=U^1\oplus U^{N-2}=\Q \overline{a}\oplus \Q \overline{w}$$
    for nonzero $\overline{w}\in U^{N-2}=\overline{V^{N-1}}$. 

    Since $N\geq 6$, there are two remaining cases: 
    \begin{enumerate}
        \item When $N=7$, an additional generator might appear in degree $3$. There exists at most one additional generator, since its product with $\overline{a}$ has degree $4=N-3$. By Lemma \ref{q}, it is impossible. 

        \item When $N=6$, an additional generator might appear in degrees $2$ and $3$, and $\alpha_2=\alpha_3$. Suppose that there exist nonzero $u\in U^2$ and $u^\prime\in U^3$. Note that $u^2$ and $\overline{a}u^\prime$ are linearly independent elements of degree $4=N-2$, which contradicts $\beta_{N-2}\leq 1$. 
    \end{enumerate}

    We have concluded that there exist no other generators except $\overline{a},\overline{w}$ in $U$. It remains to consider the generators $\{x,a, q,w,y\}$ for $V$. If $N $ is odd, $x,a,w$ have even degrees, which leads to a contradiction by Lemma \ref{q}. If $N$ is even, by considering degrees and the differential $D$, we may assume  
    $$dq=Q,\ \ dw=F(x,a),\ \ dy=aw+qG(x,a),  $$
    where $Q,F,G$ are polynomials with the variables $x,a$. Note that 
    $$0=d^2y=aF+QG .$$ 
    Since $dq=Q \equiv x^2 \ ({\rm mod }\  (a))$, we conclude that 
    $$F=QR, \ \ G=-aR$$
    for some polynomial $R$. By the change of basis $ w-qR\mapsto w^\prime $, we can replace the original differential by 
    $$dq=Q,\ \ dw^\prime=0,\ \ dy=aw^\prime, $$
    which also leads to a contradiction by Lemma \ref{q}.

\end{proof}

\subsection{Possible degrees for the case $\delta=0$}\label{sub4}

\ 

As we can see from Lemmas \ref{lm1} and \ref{lm2}, the possible degrees for generators of $U$ are highly constrained. Recall that $\alpha_1=\alpha_{N-2}=0$ by Lemma \ref{1}, and $U^{even}$ is nontrivial by Lemma \ref{q}. Denote by $2m$ the least even degree for generators of $U^{even}$. Then according to Lemma \ref{lm2}\,(2), we have 
$$4m\geq N-3.$$ 

Recall that $N=n+1\geq6$. We first consider the case when $N$ is even. 

\begin{proposition}\label{even}
    If $N$ is even, it is impossible. 
    
\end{proposition}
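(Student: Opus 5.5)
The plan is to use Lemmas \ref{lm1} and \ref{lm2} to pin $U$ down completely, and then to rule out the few surviving Sullivan models directly, in the style of Lemma \ref{1}.

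\textbf{Step 1: how even and odd generators pair up.} Since $N$ is even, $N-1-k$ and $k$ have opposite parity. So the symmetry $\alpha_k=\alpha_{N-1-k}$ of Lemma \ref{lm1} matches every even degree $e$ occurring in $U$ with an odd degree $N-1-e$ occurring in $U$, with the same dimension. Let $2m$ be the least even degree, as in the paragraph before the statement.
\begin{itemize}
\item Lemma \ref{lm2}(2) gives $4m\geq N-3$. Since $N$ is even this means $4m\geq N-2$.
\item If some even degree $e>2m$ also occurs, take $u\in U^{2m}$ and $v\in U^{N-1-e}$, both nonzero. Then $uv$ is a nonzero element of $\Lambda^{\geq2}U$ of degree $2m+N-1-e\leq N-2$.
\item By Lemma \ref{lm2}, $\beta_k=0$ for $k\leq N-4$. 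So $uv$ must lie in degree $N-3$ or $N-2$, which forces $e\in\{2m,2m+1\}$. Parity then gives $e=2m$.
\end{itemize}
Hence $U^{even}=U^{2m}$ and $U^{odd}=U^{N-1-2m}$, and these two spaces have the same dimension $s$.

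\textbf{Step 2: forcing $s=1$ and $N=4m+2$.} I split according to $4m$.
\begin{itemize}
\item Suppose $4m=N-2$. If $s\geq2$, then $u_1^2$ and $u_1u_2$ are two independent elements of $(\Lambda^{\geq2}U)^{N-2}$, contradicting $\beta_{N-2}\leq1$. So $s=1$, and $U=\Q u_{2m}\oplus\Q v_{2m+1}$.
\item Suppose $4m\geq N$. Then the odd degree $o=N-1-2m$ is at most $N/2-1$, so products of two odd generators have degree $2o\leq N-2$.
  \begin{itemize}
  \item If $s\geq2$, there are at least $\binom{s}{2}\geq1$ such products. If $2o\leq N-4$ this contradicts $\beta_{2o}=0$. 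Otherwise $2o=N-2$, which is impossible since it would give $4m=N$ with $N/2$ odd, contradicting $o=N/2$ odd only when parity fails; I will check this parity carefully, and in the borderline case use $\beta\leq1$ to get $s=2$ at most.
  \item So in this subcase $s=1$ as well, giving $U=\Q u_{2m}\oplus\Q v_o$.
  \end{itemize}
\end{itemize}
In all cases the generators of $V$ are $x_2$, $q_3$, one odd generator $a$ (the desuspension of $u$), one even generator $z$ (the desuspension of $v$), and $y$ of even degree $N$. The only exception is when the degrees collapse onto $2$ or $3$, that is, when $z$ or $a$ lands in $B$. This cannot happen for $N\geq6$ except in small cases that I will list by hand.

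\textbf{Step 3: ruling out the surviving models.} For these few-generator models, I will write the most general differentials allowed by degree:
\[
dq=x^2,\qquad da\in\Q[x]+q\,\Q[x],\qquad dz\in(\text{span of } xa,\ qa,\ q\,x^j),\qquad dy\in(az,\ q\cdot\text{poly},\ a\cdot\text{poly}).
\]
Here $dy$ has odd degree $N+1$ and must involve an odd generator.

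Imposing $d^2=0$ and minimality, as in the proof of Lemma \ref{1}, I expect a factorization argument modulo $x^2$ to let me change basis so that $q$ appears in no differential. That contradicts Lemma \ref{q}. Alternatively, $y^\ast$ together with $(y,x)$ will fail to be the only homology, for example because $a^\ast$ or $z^\ast$ becomes a non-exact cycle, contradicting (\ref{td}).

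\textbf{Main obstacle.} The hardest step is this final differential analysis in the case $N=4m+2$. There $dz$ can contain $xa$ and $dy$ can contain $az$, mirroring the realizable odd-$N$ model in Theorem \ref{fi}(3). I will first try a parity argument: with $y$ even, the term $dy=z^2+2aw$-type structure is unavailable, because $z^2$ has even degree $4m+4\neq N+1$. This should show that $dy\in a\cdot(\ldots)+q\cdot(\ldots)$. Then $d^2y=0$ should force the $q$-coefficient to be a multiple of $a$, which kills $q$ after a basis change, as in Lemma \ref{1}.
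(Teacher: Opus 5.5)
Your overall strategy matches the paper's: pin down $U$ with Lemmas \ref{lm1} and \ref{lm2}, then use $d^2=0$ to remove $q$ from every differential, contradicting Lemma \ref{q}. However, Steps 1--2 contain two genuine errors, and Step 3 is never carried out.

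\textbf{Step 1.} The condition $2m+N-1-e\in\{N-3,N-2\}$ gives $e\in\{2m+2,\,2m+1\}$, not $\{2m,2m+1\}$. Parity therefore leaves $e=2m+2$. In that case $uv$ sits in degree $N-3$, where $\beta_{N-3}\le 1$ allows it, so $U^{even}=U^{2m}$ is not proved. This can be repaired, but only with products you did not use. The odd generators in degrees $N-1-2m$ and $N-3-2m$ multiply into degree $2N-4-4m$. That degree is $\le N-4$ when $4m\ge N$, contradicting $\beta=0$. It equals $N-2$ when $4m=N-2$, and then together with $u^2$ it violates $\beta_{N-2}\le 1$.

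\textbf{Step 2.} The case $2o=N-2$ is not impossible: it is exactly $4m=N$, $o=2m-1$, which occurs for every $N\equiv 0 \pmod 4$. There $\beta_{N-2}\le 1$ only gives $s\le 2$, via the product $v_1v_2$ of two elements of $U^{2m-1}$. So your conclusion ``$s=1$ as well'' is unjustified, and the model with even $z_1,z_2\in V^{2m}$ and odd $a_1,a_2\in V^{2m+1}$ is never excluded. The paper handles arbitrary $s$ in this case. After a change of basis, $da_i=x\sum_j\lambda_{ij}z_j$ and $dy=\sum_i a_iz_i+qH(x,z)$. Then $d^2y=x\sum_{i,j}\lambda_{ij}z_iz_j+x^2H=0$ forces $\lambda_{ij}=-\lambda_{ji}$ and $H=0$.

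\textbf{Step 3.} This is where the proof actually lives, and you only state an expectation. What is needed, in each of the cases $4m\ge N+2$, $4m=N$, $4m=N-2$, is an explicit normal form for $d$ and the $d^2=0$ computation.

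For $4m\ge N+2$ (your $a\in V^{2m+1}$, $z\in V^{N-2m}$), the paper writes $da=F(x,z)$ and $dy=aG(x,z)+qH(x,z)$. It then uses Proposition \ref{sig}(3), namely $\Sigma(a^\ast)\neq 0$, to arrange $G=z+\mu x^{N/2-m}$, so that $x\nmid G$. Now $FG+x^2H=0$ gives $F=x^2S$ and $H=-SG$. The substitutions $z+\mu x^{N/2-m}\mapsto z'$ and $a-qS\mapsto a'$ yield $dz'=da'=0$ and $dy=a'z'$, so $q$ appears in no differential.

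For $N=4m+2$, which you single out as hardest, the computation is actually immediate. The general form is $dz=\lambda_1xa+\lambda_2qx^m$ and $dy=az+\mu_1ax^{m+1}+\mu_2qx^{2m}+\mu_3qzx^{m-1}$. Then $d^2z=\lambda_2x^{m+2}=0$ and $d^2y=\mu_2x^{2m+2}+\mu_3(x^{m+1}z-\lambda_1qax^m)=0$ give $\lambda_2=\mu_2=\mu_3=0$ at once. No parity argument about $z^2$ is needed.

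Your candidate list of differentials is also inaccurate by degree. $qa$ can never occur in $dz$. $da$ has even degree, so it cannot contain $q\,\Q[x]$, but it can contain $z$-terms (for instance $xz_j$ when $4m=N$).
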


\begin{proof}
    Recall that $$\alpha_k=\alpha_{N-1-k}$$ for $1\leq k\leq N-2$, and $k, N-1-k$ have different parity for $N$ even. Thus the odd degrees for generators of $U^{odd}$ are at most $N-1-2m$. 

    First suppose that $4m\geq N+2$. Then $2(N-1-2m)\leq N-4$. By Lemma \ref{lm2}, it follows that there cannot be two linearly independent elements in $U^{odd}$. Therefore, we conclude that there exists precisely one generator in $U^{odd}=U^{N-1-2m}$ and one generator in $U^{even}=U^{2m}$. Note that $N-1-2m<2m$. Denote the corresponding generators in $V$ by $l,h$ with 
    $$l\in V^{N-2m},\ \ h\in V^{2m+1}, \ \ |l|<|h|.$$
    Then the generators for $V$ are $\{x,q,l,h,y\}$. By considering degrees and $d^2=0$, we may assume 
    $$dq=x^2,\ \ dl=0,\ \ dh=F(x,l),\ \ dy=hG(x,l)+qH(x,l),$$
    where $F,G,H$ are polynomials with the variables $x,l$. Note that 
    $$0=d^2y=FG+x^2H.$$
    For degree reasons, we can set 
    $$G(x,l)=\lambda l+\mu x^{\frac{N}{2}-m}$$ 
    for some $\lambda,\mu \in \Q$. By Proposition \ref{sig}\,(3), $\Sigma(h^\ast)=\lambda l\neq 0$. Without loss of generality, we may assume $\lambda=1$, and it follows that $x\nmid G$. Thus, we can conclude that 
    $$F=x^2S, \ \ H=-SG$$
    for some polynomial $S$. By the change of basis $l+\mu x^{\frac{N}{2}-m}\mapsto l^\prime$ and $h-qS \mapsto h^\prime$, we can replace the original differential by 
    $$dq=x^2,\ \ dl^\prime=dh^\prime=0,\ \ dy=h^\prime l^\prime ,$$
    which contradicts Lemma \ref{q}. 

    It remains to consider the case $N-3\leq 4m\leq N+1$. If $N\equiv 0\ ({\rm mod}\ 4)$, we have $4m=N$, and the odd degrees for generators of $U^{odd}$ are at most $2m-1$. It follows that $U^{odd}=U^{2m-1}$, since every lower odd-degree generator admits a product of degree $(2m-3)+(2m-1)\leq N-4$. Thus the generators for $V$ are $\{x,q,l_i,h_i,y\}$ with 
    $$l_i\in V^{2m},\ \ h_i\in V^{2m+1}.$$ 
    By similar arguments as above, we may assume 
    $$dq=x^2, \ \ dl_i=0,\ \ dh_i=x\sum_j\lambda_{ij}l_j,\ \ dy=\sum_i h_il_i + qH(x,l_i) $$
    for some $\lambda_{ij}\in Q$, after a suitable change of basis. Note that 
    $$0=d^2y=x \sum_{i,j} \lambda_{ij}l_il_j +x^2H. $$
    Then we conclude that 
    $$\lambda_{ij}+\lambda_{ji}=0,\ \ H=0, $$
    which also contradicts Lemma \ref{q}. 

    If $N\equiv 2\ ({\rm mod}\ 4)$, we have $4m=N-2$, and the odd degrees for generators of $U^{odd}$ are at most $2m+1$. The squares of elements in $U^{2m}$ have degree $4m=N-2$, which implies $\alpha_{2m}=\alpha_{2m+1}=1$ by Lemma \ref{lm2}. We denote the corresponding generators in $V$ by $l,h$ with 
    $$l\in V^{2m+1},\ \ h\in V^{2m+2}. $$
    Thus, there exists no nonzero element in $U^{2m-1}$, since its product with $\overline{h}$ has degree $4m=N-2$, and is linearly independent from $\overline{l}^2$. Furthermore, there exists no nonzero element in $U^{odd}$ of degree $\leq 2m-3$, since its product with $\overline{h}$ has degree $\leq 4m-2=N-4$. Thus the generators for $V$ are $\{x,q,l,h,y\}$. By similar arguments as above, we may assume 
    $$dq=x^2,\ \ dl=0,\ \ dh=\lambda_1 xl+\lambda_2 qx^m, $$
    $$dy=lh+\mu_1 l x^{m+1}+\mu_2 qx^{2m} +\mu_3 q hx^{m-1} $$
    for some $\lambda_1,\lambda_2,\mu_1,\mu_2,\mu_3  \in \Q$ after a suitable change of basis. Note that 
    $$0=d^2h=\mu x^{m+2}, $$
    $$0=d^2y=\mu_2 x^{2m+2}+\mu_3 (hx^{m+1} - \lambda_1 qlx^m). $$
    It follows that $\lambda_2=\mu_2=\mu_3=0$, which also contradicts Lemma \ref{q}. This completes the proof of Proposition \ref{even}.

\end{proof}

We now consider the other case when $N$ is odd.

\begin{proposition}\label{odd}
    If $N$ is odd, then we have 
    $$N\equiv 3\ ({\rm mod}\ 4). $$
    Moreover, the generators for $V$ are $\{ x,q,a,z,w,y \}$ with degrees 
    $$|x|=2,\  |q|=3,\  |a|=\frac{N-1}{2},\  |z|=\frac{N+1}{2},\  |w|=\frac{N+3}{2}, \  |y|=N. $$
    
\end{proposition}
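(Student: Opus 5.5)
We work in the setting of Subsection \ref{sub4}: $N\geq 7$ is odd, $\delta=0$, and $\alpha_1=\alpha_{N-2}=0$ by Lemma \ref{1}. Let $2m$ be the least degree of a generator of $U^{even}$. By Remark \ref{rm}, every generator of $U^{even}$ has degree $\frac{N-1}{2}$ when $N\equiv 1\ ({\rm mod}\ 4)$, and degree $\frac{N-3}{2}$ or $\frac{N+1}{2}$ when $N\equiv 3\ ({\rm mod}\ 4)$. Since $N$ is odd, $k$ and $N-1-k$ have the same parity, so the symmetry $\alpha_k=\alpha_{N-1-k}$ of Lemma \ref{lm1} pairs odd degrees with odd degrees and even degrees with even degrees.

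\textbf{Step 1: the odd generators of $U$.} Two odd generators $u_1,u_2$ of degrees $k_1,k_2$ give a nonzero product of degree $k_1+k_2$. Lemma \ref{lm2} forces $k_1+k_2\geq N-3$, and if $k_1+k_2\in\{N-3,N-2\}$ this product uses up the single slot allowed by $\beta_{N-3}\leq1$ or $\beta_{N-2}\leq 1$. A product of an odd generator with an even generator is constrained in the same way. Combining these bounds with the symmetry $\alpha_k=\alpha_{N-1-k}$, we will show that $U^{odd}$ is concentrated in degree $\frac{N-1}{2}$ and is at most one-dimensional: two odd generators in degrees $k$ and $N-1-k$ with $k\neq\frac{N-1}{2}$ would have product of degree $N-1>N-2$, which is allowed, but then the products with the even generators become too many.

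\textbf{Step 2: excluding $N\equiv 1\ ({\rm mod}\ 4)$.} In this case $\frac{N-1}{2}$ is even, so $U^{odd}$ is forced to vanish by Step 1 together with the parity count. Then all generators of $V$ other than $q,y$ have even degree: $x$ has degree $2$, and $\widetilde{U}^{even\,+1}$ consists of odd-degree elements only if $U$ has even generators. So I will redo the parity bookkeeping carefully, using Lemma \ref{q} in the form ``some differential involves $q$'' together with a $d^2=0$ computation like those in Proposition \ref{even}. I expect the conclusion to be that the model reduces to $dy$ not containing $q$, which is a contradiction.

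\textbf{Step 3: the case $N\equiv 3\ ({\rm mod}\ 4)$.} The even degrees available are $\frac{N-3}{2}$ and $\frac{N+1}{2}$, and these are exchanged by the symmetry of Lemma \ref{lm1}. Each square has degree $N-3$ or $N+1$, and $\beta_{N-3}\leq 1$ allows at most one generator in degree $\frac{N-3}{2}$, so $\alpha_{\frac{N-3}{2}}=\alpha_{\frac{N+1}{2}}\leq 1$. Lemma \ref{q} requires an odd-degree generator of $V$ other than $q,y$, so these two are nonzero. This gives $a\in V^{\frac{N-1}{2}}$ and $w\in V^{\frac{N+3}{2}}$. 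The odd degree $\frac{N-1}{2}$ is fixed by the symmetry. A generator $\overline{z}\in U^{\frac{N-1}{2}}$ multiplied by $\overline{a}$ lands in degree $N-2$, and this is allowed once, so there is at most one such $z\in V^{\frac{N+1}{2}}$. Any other odd generator of $U$ is excluded because its products with $\overline{a}$ (or with itself) fall into degrees $\leq N-4$, or would exceed the bounds $\beta_{N-3},\beta_{N-2}\leq 1$.

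\textbf{Step 4: existence of $z$.} It remains to show that $z$ actually occurs. Suppose not, so the generators are $\{x,q,a,w,y\}$. Writing the general differentials and using $d^2=0$, as in the proof of Lemma \ref{1}, a change of basis removes $q$ from $dy$, contradicting Lemma \ref{q}. I expect the main obstacle to be the exhaustive exclusion of stray odd generators in Steps 1 and 3, and in particular the correct use of $\beta_{N-3},\beta_{N-2}\leq 1$ when several products land in the same degree.
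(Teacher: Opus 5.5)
Your treatment of $N\equiv 3\ ({\rm mod}\ 4)$ in Steps 3 and 4 is essentially the paper's argument. The exclusion of $N\equiv 1\ ({\rm mod}\ 4)$, however, has a genuine gap: the counting in Step 1 cannot force $U^{odd}=0$.

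Write $N=4m+1$, so $m\geq 2$. Remark \ref{rm} gives $U^{even}=U^{2m}$. Products with $U^{2m}$ rule out odd degrees $\leq 2m-3$, and Lemma \ref{lm1} then leaves $U^{odd}=U^{2m-1}\oplus U^{2m+1}$ with $\alpha_{2m-1}=\alpha_{2m+1}$. The only further information from $\beta_{N-2}\leq 1$ is $\alpha_{2m-1}\alpha_{2m}\leq 1$, because $U^{2m-1}\cdot U^{2m}$ lands in degree $N-2$. Now take $\alpha_{2m-1}=\alpha_{2m}=\alpha_{2m+1}=1$, for instance $N=9$ with $\alpha_3=\alpha_4=\alpha_5=1$. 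This configuration passes every test you invoke:
\begin{itemize}
\item odd elements square to zero;
\item the only nonzero product of degree $\leq N-2$ is the single one in degree $N-2$;
\item Lemma \ref{q} is not violated, since $V^{2m+1}\neq 0$.
\end{itemize}
So your claim that ``the products with the even generators become too many'' fails exactly in the case you need.

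The missing ingredient is the paper's use of Proposition \ref{sig}\,(3) to show $\alpha_{2m}\geq 2$. Suppose $\widetilde{U}^{2m+1}=\Q z$. Since $\ker\Sigma=\Q x^\ast\oplus\Q q^\ast$ and $\im\Sigma=\widetilde{U}$, we must have $\Sigma(z^\ast)=\lambda z$ with $\lambda\neq 0$. But $\Sigma(z^\ast)$ is, up to sign, $z^\ast$ applied to the quadratic part of $dy$. That can never produce $z$, because $|z|$ is odd and $z^2=0$. Only after this does $\alpha_{2m-1}\alpha_{2m}\leq 1$ give $\alpha_{2m-1}=\alpha_{2m+1}=0$.

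Your Step 2 also contains a parity slip. Once $U^{odd}=0$, the generators in $\widetilde{U}^{2m+1}=V^{2m+1}$ have odd degree, so the second statement of Lemma \ref{q} does not apply. You anticipate this, and it is repairable:
\begin{itemize}
\item each $dz_i$ lies in $\Q x^{m+1}$ and can be removed by the change of basis $z_i\mapsto z_i-c_ix^{m-1}q$;
\item then $dy=\sum c_{ij}z_iz_j+\kappa x^{2m+1}+\sum_i\mu_i x^{m-1}qz_i$;
\item $d^2y=\sum_i\mu_i x^{m+1}z_i=0$ forces all $\mu_i=0$, contradicting the first statement of Lemma \ref{q}.
\end{itemize}

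A minor point in Step 3: odd generators of degree $\geq\frac{N+3}{2}$ are excluded by the symmetry of Lemma \ref{lm1}, not by their products with $\overline{a}$. Those products have degree $\geq N$ and impose nothing.
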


\begin{proof}
    Recall that $$\alpha_k=\alpha_{N-1-k}$$ for $1\leq k\leq N-2$, and $k, N-1-k$ have the same parity for $N$ odd. According to Remark \ref{rm}, we can show that 
    \begin{equation*}
        N= \left\{  \begin{array}{ll}
        4m+1,  &  \ N\equiv 1 \  ({\rm mod}\ 4),\\
        4m+3,   &  \ N\equiv 3 \  ({\rm mod}\ 4), 
        \end{array}   \right.
    \end{equation*}
    where $2m$ is the least even degree for generators of $U^{even}$. 

    If $N=4m+1$, then we have $U^{even}=U^{2m}$. We can conclude that there exists no nonzero element in $U^{odd}$ of degree $\leq 2m-3$, since its product with elements in $U^{2m}$ has degree $\leq4m-3 =N-4$. By Lemma \ref{lm1}, it follows that $U^{odd}=U^{2m-1}\oplus U^{2m+1}$ with $\alpha_{2m-1}=\alpha_{2m+1}$. Since 
    the products of an element in $U^{2m-1}$ and an element in $U^{2m}$ have degree $4m-1=N-2$, we can derive that 
    \begin{equation}\label{999}
        \alpha_{2m-1}\alpha_{2m}\leq 1.
    \end{equation}
    On the other hand, suppose that $\alpha_{2m}=1$. Denote by $z$ the generator of $\widetilde{U^{2m}}\subset V^{2m+1}$. Thus by Proposition \ref{sig}\,(3), $\Sigma(z^\ast)=\lambda z$ for some nonzero $\lambda\in \Q$. However, since $z^2=0$ and $\widetilde{U^{2m}}=\Q z$, the quadratic part of $dy$ cannot contain $z$, which implies $\Sigma(z^\ast)=0$, a contradiction. It follows that $\alpha_{2m}\geq 2$, and thus the inequality (\ref{999}) gives $\alpha_{2m-1}=\alpha_{2m+1}=0$. Similarly, by considering the differentials, this is impossible by Lemma \ref{q}. 

    If $N=4m+3$, then we have $U^{even}=U^{2m}\oplus U^{2m+2}$ with $\alpha_{2m}=\alpha_{2m+2}$. We can conclude that there exists no nonzero element in $U^{odd}$ of degree $\leq 2m-1$, since its product with elements in $U^{2m}$ has degree $\leq 4m-1 =N-4$. By Lemma \ref{lm1}, it follows that $U^{odd}= U^{2m+1}$. Since products of elements in $U^{2m}$ have degree $4m=N-3$, it follows that $\alpha_{2m}=\alpha_{2m+2}=1$. Denote the corresponding generators in $V$ by $a,w$ with
    $$a\in V^{2m+1},\ \ w\in V^{2m+3}.$$
    Similarly we have $\alpha_{2m+1}\leq 1$ by considering its product with elements in $U^{2m}$. 

    Finally, suppose that $\alpha_{2m+1}=0$. Then the generators for $V$ are $\{x,q,a,w,y\}$, and we may assume 
    $$dq=x^2, \ \ da=0, \ \ dw= \lambda qa, \ \ dy=aw+\mu qax^m+\gamma qwx^{m-1}$$
    for some $\lambda,\mu,\gamma\in\Q$, after a suitable change of basis. Note that 
    \begin{equation*}
        \begin{split}
            &0=d^2w=\lambda x^2a \ \ \implies \ \ \lambda=0, \\
            &0=d^2y=\mu ax^{m+2} +\gamma wx^{m+1} \ \ \implies \ \ \mu=\gamma=0, 
        \end{split}
    \end{equation*}
    which contradicts Lemma \ref{q}. Thus we have $\alpha_{2m+1}=1$, and denote the corresponding generator in $V$ by $z$ with $z\in V^{2m+2}$. This completes the proof of Proposition \ref{odd}.

\end{proof}

\subsection{Realization when $n\geq 5$}\label{sub5}

\ 

Propositions \ref{even} and \ref{odd} show that the only possible realization for $K(\Q,n)\times K(\Q,n+2)$ with $n\geq 5$ is 
\begin{equation}\label{model}
    \big( \Lambda(x_2,q_3,a_{2m+1},z_{2m+2},w_{2m+3},y_{4m+3}), d \big)
\end{equation}
when $n=4m+2$. After a suitable change of basis to kill all the powers of $x$, we may assume 
$$dx=0,\ \ dq=x^2,\ \ da=0,\ \ dz=\lambda_1 xa+\lambda_2 x^m q, \ \ dw=\mu_1 xz+\mu_2 qa,$$
$$dy= \gamma_1 x^{m+1}z +\gamma_2 z^2 +\gamma_3 x^m qa  +\gamma_4 x^{m-1} qw +aw$$
with $\lambda_1,\lambda_2,\mu_1,\mu_2,\gamma_1,\gamma_2,\gamma_3,\gamma_4\in \Q$. From 
$$0=d^2z=\lambda_2 x^{m+2},$$
we deduce that $\lambda_2=0$. The discussion is divided into the following two cases. 

\noindent {\bf Case 1:} $\lambda_1=0$. Then we have $dz=0$, and 
$$0=d^2w=\mu_2 x^2a$$
implies $\mu_2=0$. Then Case 1 can be divided into the following two subcases. 

\noindent {\bf Subcase 1.1:} $\mu_1=0$. Then we have $dw=0$, and 
$$0=d^2y=\gamma_3 x^{m+2}a+\gamma_4 x^{m+1}w$$
implies $\gamma_3=\gamma_4=0$. This is impossible by Lemma \ref{q}. 

\noindent {\bf Subcase 1.2:} $\mu_1\neq0$. Without loss of generality, we may assume $\mu_1=1$, i.e., $dw=xz$. Then we have
$$0=d^2y=\gamma_3 x^{m+2}a+\gamma_4 (x^{m+1}w -x^mqz)-axz.$$
This is impossible because the term $-axz$ cannot be canceled. 

\noindent {\bf Case 2:} $\lambda_1\neq 0$. Without loss of generality, we may assume $\lambda_1=1$, i.e., $dz=xa$. Then we have 
$$0=d^2w=\mu_1 x^2a+\mu_2 x^2a,$$
which implies $\mu_1=-\mu_2$. Then Case 2 can be divided into the following two subcases. 

\noindent {\bf Subcase 2.1:} $\mu_1=\mu_2=0$. Then we have $dw=0$. From 
$$0=d^2y=\gamma_1 x^{m+2}a + 2\gamma_2 xaz +\gamma_3 x^{m+2}a +\gamma_4 x^{m+1}w ,$$
we deduce that $\gamma_1=-\gamma_3$, $\gamma_2=\gamma_4=0$. By the change of basis $y-\gamma_1x^{m-1}qz$, we can replace the differential of $y$ by $dy=aw$, which contradicts Lemma \ref{q}. 

\noindent {\bf Subcase 2.2:} $\mu_1=-\mu_2\neq0$. Without loss of generality, we may take $\mu_1=1$ and $\mu_2=-1$, i.e., $dw=xz-qa$. Then we have 
\begin{equation*}
    \begin{split}
        0=d^2y&=\gamma_1 x^{m+2}a+2\gamma_2 xaz+\gamma_3 x^{m+2}a+\gamma_4 (x^{m+1}w -x^mqz) -a(xz-qa) \\
        &= (\gamma_1+\gamma_3)x^{m+2}a+(2\gamma_2-1)xaz+\gamma_4(x^{m+1}w -x^mqz), 
    \end{split}
\end{equation*}
which implies 
$$\gamma_1=-\gamma_3, \ \ \gamma_2=\frac{1}{2}, \ \ \gamma_4=0 .$$
Therefore, we conclude that 
$dy=\gamma_1 x^m(xz-qa) +\frac{1}{2} z^2+aw.$
Replacing $2(y-x^mw)$ by $y$ gives the required differential 
\begin{equation}\label{diff}
    \begin{split}
        dx&=da=0,\ \ dq=x^2,\ \ dz=xa, \\
        dw&=xz-qa,\ \ dy=z^2+2aw. 
    \end{split}
\end{equation}

Recall that $n=4m+2$, and denote by $M_X$ the minimal Sullivan algebra (\ref{model}) with the differential (\ref{diff}). It remains to show that the space $X$ with minimal Sullivan model $M_X$ indeed satisfies
$$\B(X)\simeq_{\Q}K(\Q,4m+2)\times K(\Q,4m+4) .$$

\begin{proposition}\label{geq5}
    Let $m$ be a positive integer, and let $M_X$ be the minimal Sullivan algebra defined as above. Then we have 
    \begin{equation*} 
        H_k({\rm Der}'(M_X))=\left\{
        \begin{array}{ll}
        \Q y^\ast, &k=4m+3,\\
        \Q (y,x), &k=4m+1,\\
        0, &{\rm otherwise}. 
        \end{array}
        \right.
    \end{equation*}
    
\end{proposition}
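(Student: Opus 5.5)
We compute $H_*(\Der'(M_X),D)$ directly, organizing the computation by the degree $k$ by which a derivation lowers degree. The generators have degrees $|x|=2$, $|q|=3$, $|a|=2m+1$, $|z|=2m+2$, $|w|=2m+3$ and $|y|=4m+3=N$. So $\Der_k(M_X)=0$ for $k>N$, and for each $k$ the space $\Der_k$ is a finite sum of pieces $(v,\Lambda V^{|v|-k})$.

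\textbf{Top degrees.} First we verify the top of the range, following the computation in the proof of Proposition \ref{sig}\,(3):
\begin{itemize}
\item $\Der_N=\Q y^\ast$ and $D y^\ast=0$.
\item $\Der_{N-1}=(V^{N-1})^\ast=0$.
\item $\Der_{N-2}=(V^{N-2})^\ast\oplus(y,V^2)=\Q(y,x)$. This is a cycle, and it is not a boundary because $\Der_{N-1}=0$.
\end{itemize}
Degree $N-1$ needs extra care when $m=1$, where $|w|=5=N-2$. Then $\Der_{N-2}$ also contains $w^\ast$, and $Dw^\ast=\pm(y,2a)\neq0$. Hence the cycles in degree $N-2$ are still only $\Q(y,x)$, and $H_{N-1}=0$.

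\textbf{Lower degrees.} For the remaining degrees $1\le k\le N-3$ we want $H_k=0$, with the convention that in degree $1$ only cycles occur in $\Der'$, so there we need every cycle to be a boundary. The plan is a filtration by the ``target generator'' $v$. Write
$$\Der_k=\bigoplus_v (v,\Lambda V^{|v|-k})$$
and filter by the degree of $v$, highest first. The associated graded differential is $\theta\mapsto \pm\theta\circ d$ restricted to generators. This is the dual of the linear part of $d$ paired with the module structure over $(\Lambda V,d)$. The complex is then essentially
$$\Hom(V,\Lambda V)\cong (\Lambda V,d)\otimes V^\ast$$
with a twisted differential.

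The key observation is that $\Der(M_X)$ is a semifree-type module over $M_X$. Its cohomology is that of $M_X\otimes s^{-1}V^\ast$ with the differential induced by the dual of the linear part of $d$, perturbed by the quadratic part. Since $(M_X,d)$ is quasi-isomorphic to $\Lambda(x,q)$, we get $H(M_X)$ of the same shape. Then
$$H(\Lambda V)=\Q\oplus\Q x\oplus \Q\cdot(\text{classes from } a,z,w,y)\ ?$$
This must be computed, and the next step does it.

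\textbf{Computing $H(M_X)$ and running the spectral sequence.} First, $(\Lambda(x,q,a,z,w),d)$ has cohomology computable by hand. Setting $E=\Lambda(x,q)$ with $H(E)=\Q\oplus\Q x$, the extension by $a,z,w$ gives a Koszul-type complex. Then $dy=z^2+2aw$ kills the top class.

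I would therefore set up a spectral sequence for $\Der$ from the filtration by wordlength, or equivalently view $\Der(M_X)$ as the twisted tensor product
$$M_X\otimes(\Q y^\ast\oplus\Q w^\ast\oplus\Q z^\ast\oplus\Q a^\ast\oplus\Q q^\ast\oplus\Q x^\ast).$$
The $E_1$ page is $H(M_X)\otimes V^\ast$ with the $D$ induced by the linear parts. These are $\Sigma$ and its analogues: $Dy^\ast=0$, and
$$Dw^\ast=\pm(y,2a),\quad Dz^\ast=\pm(y,2z)\pm(w,x),\quad Da^\ast=\pm(y,2w)\pm(z,x)\pm(w,q),\quad Dq^\ast=\pm(w,a),\quad Dx^\ast=\pm(q,2x)\pm(z,a)\pm(w,z)\ldots$$
The pairing $\Sigma$ of Proposition \ref{sig} ($a\leftrightarrow w$, $z\leftrightarrow z$) is nondegenerate on $\{a,z,w\}$. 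This should cancel the $a^\ast,z^\ast,w^\ast$ columns against the $(y,\cdot)$ column in the relevant degrees. The pair $x^\ast,q^\ast$ cancels against the $(q,x)$-type part through $dq=x^2$. What survives is exactly $y^\ast$ and $(y,x)$.

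\textbf{Main obstacle.} The hardest step is the bookkeeping in the middle degrees $3\le k\le N-3$. There, terms like $(w,x^{m}q)$ or $(y,x^{j}\cdot\ldots)$ appear, and one must check that no stray class survives. The concern is especially the degree-$1$ cycles and the classes $(v,x^j)$ coming from $H(M_X)$. I would first try to show that $(\Der(M_X),D)$ is quasi-isomorphic to $\Hom(V,\Lambda(x,q))$ twisted. Failing that, I would verify acyclicity column by column using the contracting pairs above, handling $m=1$ separately since extra degree coincidences occur there.
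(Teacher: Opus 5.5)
\emph{There is a genuine gap in the middle degrees $1\le k\le N-3$.}

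Your check of the top degrees is correct. $\Der_{N-1}=0$, and the cycles in $\Der_{N-2}$ are $\Q(y,x)$; for $m=1$ the extra $w^\ast$ has $Dw^\ast=\pm 2(y,a)\neq 0$. So $H_N$, $H_{N-1}$ and $H_{N-2}$ come out right.

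The substance of the proposition, however, is $H_k=0$ for $1\le k\le N-3$, and there you give no argument. You sketch several spectral sequences (by generator, by wordlength, a ``twisted tensor product''), which are not equivalent. You then say certain cancellations ``should'' happen and postpone the verification.

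The one concrete input you use is false. $(M_X,d)$ is not quasi-isomorphic to $(\Lambda(x,q),dq=x^2)$: both are minimal and not isomorphic. You seem to be thinking of the auxiliary algebra of Lemma~\ref{alg}. Already $[a]\neq 0$ in $H^{2m+1}(M_X)$, because $da=0$ and $d$ is decomposable.

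Consequently an $E_1$-term $\Hom(V,H(M_X))$ would be large. The formulas you list, e.g.\ $Dz^\ast=\pm(y,2z)\pm(w,x)$ with $z$ not a cocycle, are differentials in $\Der(M_X)$ itself, not on such a page. Your cancellations also concern only the constant derivations $v^\ast$. They leave untouched most of the complex, e.g.\ $(a,x^k)$, $(z,x^kq)$, $(y,x^kz)$, $(y,x^kqa)$. The conclusion ``what survives is exactly $y^\ast$ and $(y,x)$'' is just the statement to be proved.

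\emph{The missing idea is the paper's auxiliary weight.} Set $\mathrm{wt}(x)=\mathrm{wt}(q)=0$, $\mathrm{wt}(a)=\mathrm{wt}(z)=\mathrm{wt}(w)=1$, $\mathrm{wt}(y)=2$. The differential is weight-homogeneous, so $\Der(M_X)$ splits into subcomplexes according to weight change. For degree reasons, positive-degree derivations change weight only by $-2,-1,0$.

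In weight $-2$ the complex is $(y,\Lambda(x,q))$ with $D(y,x^kq)=(y,x^{k+2})$, leaving exactly $y^\ast$ and $(y,x)$. This is where $dq=x^2$ enters.

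In weight $-1$ the complex is $\Hom(W_1,\Lambda(x,q))\oplus (y,W_1\otimes\Lambda(x,q))$ with $W_1=\Q a\oplus\Q z\oplus\Q w$. On it $D$ is triangular with off-diagonal term $\varphi(\theta)=\pm(y,\theta(dy))$. Explicitly $a^\ast\mapsto 2(y,w)$, $z^\ast\mapsto -2(y,z)$, $w^\ast\mapsto -2(y,a)$, extended $\Lambda(x,q)$-linearly. Since $\varphi$ is an isomorphism, this piece is an acyclic mapping cone. This is the precise form of your remark that the pairing $a\leftrightarrow w$, $z\leftrightarrow z$ cancels the $a^\ast,z^\ast,w^\ast$ column against $(y,\cdot)$; the point is that it holds over $\Lambda(x,q)$, not just on constants.

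In weight $0$ there are only six positive-degree generators: $q^\ast$, $x^\ast$, $(w,a)$, $(q,x)$, $(z,a)$, $(w,z)$. The formulas $Dq^\ast=-(w,a)$ and $Dx^\ast=-2(q,x)-(z,a)-(w,z)$, together with $D$ on the degree-$1$ elements, give acyclicity. So $x^\ast$ and $q^\ast$ die against weight-$0$ derivations, not via $dq=x^2$.

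With this decomposition no separate treatment of $m=1$ is needed.
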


\begin{proof}
    For simplicity, we assign a new weight ${\rm wt}(-)$ on $M_X$ by setting 
    $${\rm wt}(x)={\rm wt}(q)=0,\ \ {\rm wt}(a)={\rm wt}(z)={\rm wt}(w)=1,\ \ {\rm wt}(y)=2. $$ 
    Note that the differential $d$ preserves the weight and increases wordlength by exactly one. For degree reasons, every positive-degree derivation has weight $-2,-1,0$. Thus we have the decomposition of chain complexes
    $$(\Der^\prime(\Lambda V),D)= Der^\prime_{-2}\oplus Der^\prime_{-1} \oplus Der^\prime_0, $$
    where $Der_{i}$ is the subcomplex of $(\Der(\Lambda V),D)$ generated by derivations that increase the weight by $i$, and $Der_{i}^\prime$ denotes its positive truncation for $i=-2,-1,0$. It suffices to prove 
    $$H(Der_{-2}^\prime)=\Q y^\ast \oplus \Q (y,x),\ \ H(Der_{-1}^\prime)=H(Der_{0}^\prime)=0.$$

    For $Der_{-2}$, it is generated by 
    $$y^\ast,(y,x),\dots,(y,q),(y,xq),\dots.$$
    Note that $D(y,x^kq)=(y,x^{k+2})$ for $k\geq 0$, and $Dy^\ast=D(y,x)=0$. It follows that 
    $$H(Der_{-2}^\prime)=\Q y^\ast \oplus \Q (y,x).$$

    For $Der_{-1}$, denote 
    $$W_0=\Q x \oplus \Q q,\ \ W_1=\Q a\oplus \Q z\oplus \Q w,$$
    and then $V=W_0\oplus W_1\oplus \Q y$ is the weight decomposition of $V$. We set 
    $$Der_{-1}= (P\oplus Q,D) ,$$
    where $P$, $Q$ denote the derivation subspaces of $Der_{-1}$ 
    $$\Hom(W_1,\Lambda W_0),\ \ \Hom(y,W_1\otimes\Lambda W_0),$$
    respectively. Note that the differential $D$ satisfies 
    $$D\big(\theta,\xi\big)=\big(D_P \theta, D_Q\xi+ \varphi(\theta) \big),\ \ \ \varphi(\theta)=-(-1)^{|\theta|} \big(y, \theta(dy) \big) $$
    where $D_P,D_Q$ are the projections of $D$ onto $P,Q$, respectively. By direct computation, we have 
    $$\varphi(a^\ast)= 2(y,w),\ \ \varphi(z^\ast)=-2(y,z),\ \ \varphi(w^\ast)=-2(y,a), $$
    which implies
    $$\varphi: W_1^\ast  \otimes \Lambda W_0 \cong P \longrightarrow Q\cong (y,W_1)\otimes \Lambda W_0 $$ 
    is an isomorphism of $(\Lambda W_0,d)$-modules. The identity $D^2=0$ gives 
    $$D_Q \varphi+\varphi D_P=0.$$
    Thus $Der_{-1}$ is an acyclic mapping cone, which implies $H(Der_{-1}^\prime)=0$. 

    For $Der_{0}$, there are exactly six generators of positive-degree derivations: 
    \begin{equation*}\centerline{
    \begin{tabular}{c|c}
    degree & generators\\
    \hline
    3&$q^\ast$\\
    \hline
    2&$ x^\ast, (w,a)$ \\
    \hline
    1& $(q,x),(z,a),(w,z)$.\\
    \end{tabular}}\end{equation*}
    By direct computation, we have 
    \begin{equation*}
        \begin{split}
            Dq^\ast&= -(w,a), \\
            Dx^\ast&= -2(q,x)-(z,a)-(w,z), \\
            D(w,a)&= 0, \\
            D(q,x)&= -(w,xa), \\
            D(z,a)&= (w,xa)+2(y,az), \\
            D(w,z)&= (w,xa)-2(y,az).
        \end{split}
    \end{equation*}
    It follows that $H_3(Der_{0})=H_2(Der_{0})=H_1(Der_{0})=0$, which implies $H(Der_{0}^\prime)=0$. This completes the proof of Proposition \ref{geq5}.

\end{proof}

Propositions \ref{even}, \ref{odd} and \ref{geq5} provide the complete classification for the branch $n\geq 5$.

\section{The branch $n\leq 4$}

In this section, we treat the remaining branch $n\leq 4$. Recall that $(\Lambda V,d)$ is the minimal Sullivan model for $X$ with $V=V^{\geq 2} $ of finite type, and $\B (X)$ has the rational homotopy type of $K(\Q, n) \times K(\Q,n+2) $.

\subsection{The cases $n=1$ and $n=2$}

\ 

In the case $n=1$, since $\B (X)$ is simply-connected, it is impossible that $\B (X)$ has the rational homotopy type of $K(\Q, 1) \times K(\Q,3)$. 

In the case $n=2$, we have 
$$V^3=\Q y,\ \ V^{\geq 4}=0.$$
The derivations are displayed as follows. 
\begin{equation*}
    \centerline{
    \begin{tabular}{c|c}
    degree & derivations\\
    \hline
    3&$\Q y^\ast$\\
    \hline
    2&$ (V^2)^\ast$ \\
    \hline
    1& $(y,V^2)$.\\
    \end{tabular}}
\end{equation*}
Note that the differential of the derivations satisfies 
$$\Q y^\ast \xrightarrow{D_3} (V^2)^\ast  \xrightarrow{D_2} (y,V^2) \xrightarrow{D_1} 0. $$
Thus the Euler characteristic of $(\Der^\prime(\Lambda V),D)$ is $-1$. However, the Euler characteristic of $H(\Der^\prime(\Lambda V),D)$ is $-2$ by (\ref{td0}), a contradiction. Thus we obtain the following proposition. 
\begin{proposition}\label{n12}
    The classifying space $\B (X)$ of a simply-connected $\pi$-finite space $X$ with rational homology of finite type cannot have the rational homotopy type of $K(\Q,1) \times K(\Q,3)$ or $K(\Q,2) \times K(\Q,4)$. 
    
\end{proposition}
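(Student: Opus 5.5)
The plan is to treat the two cases separately, since they fail for different reasons. The case $n=1$ is immediate from simple connectivity. The case $n=2$ will follow from an Euler characteristic count on the finite-dimensional complex $(\Der'(\Lambda V),D)$, which we compare with the Euler characteristic forced by (\ref{td0}).

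\textbf{The case $n=1$.} Since $X$ is simply connected, $\B(X)$ is the universal cover of $B\mathrm{aut}(X)$ and is therefore simply connected. On the other hand, $K(\Q,1)\times K(\Q,3)$ has $\pi_1\cong\Q\neq 0$. No rational homotopy equivalence between simply-connected spaces can produce a nonzero $\pi_1$, so this case cannot occur.

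\textbf{The case $n=2$, structure of $V$.} Here $N=3$, and Theorem \ref{pf} gives $V^3=\Q y$ and $V^{\geq 4}=0$. Since $V=V^{\geq 2}$, we get $V=V^2\oplus\Q y$ with $V^2$ finite dimensional.

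\textbf{The case $n=2$, listing derivations.} Derivations of positive degree are determined by their values on generators. In degree $3$ the only possibility is $y\mapsto 1$, giving $\Q y^\ast$. In degree $2$ we can send $v\in V^2$ to $1$, or send $y$ into $(\Lambda V)^1=0$; this gives $(V^2)^\ast$. In degree $1$ we can send $y$ to $V^2$, or send $V^2$ into $(\Lambda V)^1=0$; this gives $(y,V^2)$. By definition of $\Der'$ one keeps only the cycles in degree $1$, but we first work with the full truncated complex
$$\Q y^\ast\to (V^2)^\ast\to (y,V^2)\to 0.$$

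\textbf{The case $n=2$, Euler characteristic count.} Write $s=\dim V^2$. Using the sign convention that odd degrees contribute $-1$, the Euler characteristic of this complex is $-1+s-s=-1$. Replacing degree $1$ by its cycles does not change homology in degrees $\geq 2$. Moreover, $H_1$ of $\Der'$ equals $Z_1/\mathrm{im}\,D_2$, which is the same as $H_1$ of the untruncated complex because $D_1=0$ here: every element of $(y,V^2)$ is a cycle, since its image would have degree $0$ and there are no degree-$0$ elements in the complex. Hence $\Der'$ coincides with this three-term complex, and its homology has Euler characteristic $-1$.

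\textbf{The case $n=2$, contradiction.} By (\ref{td0}) with $N=3$, the homology is $\Q$ in degrees $3$ and $1$ and zero elsewhere. Its Euler characteristic is therefore $-2$, which contradicts the value $-1$ computed above.

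The only step needing care is verifying that truncation to cycles in degree $1$ does not alter the count. This holds because $\Der_0$ does not enter $\Der'$, and $D_1$ maps $(y,V^2)$ into $\Der_0$. It is the derivation $(y,x)$ with $x\in V^2$ that is forced to represent the homology class in degree $N-2=1$, consistent with Lemma \ref{codim}.
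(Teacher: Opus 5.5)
Your proposal is correct and is essentially the paper's own argument: $n=1$ is excluded because $\B(X)$ is simply connected, and for $n=2$ the Euler characteristic $-1$ of $\Q y^\ast\to(V^2)^\ast\to(y,V^2)$ contradicts the value $-2$ forced by (\ref{td0}). One minor point: $\Der_0(\Lambda V)\neq 0$, so $D(y,V^2)=0$ is not a degree triviality but follows from $dV^2=0$ and $dy\in\Lambda^2V^2$ (and in any case $\dim\Der'_1\leq\dim V^2$ already gives $\chi\geq -1>-2$).
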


\subsection{The case $n=3$}

\ 

In this case we have 
$$V^4=\Q y,\ \ V^{\geq 5}=0,$$
and Lemma \ref{codim} implies that $\dim V^2=\dim V^3+1$. 
The derivations are displayed as follows. 
\begin{equation*}
    \centerline{
    \begin{tabular}{c|c}
    degree & derivations\\
    \hline
    4&$\Q y^\ast$\\
    \hline
    3&$(V^3)^\ast$\\
    \hline
    2&$(V^2)^\ast \oplus(y,V^2)$ \\
    \hline
    1& $(V^3,V^2)  \oplus(y,V^3)$.\\
    \end{tabular}}
\end{equation*}
Set $\dim V^2=s+1$ and $\dim V^3=s$. 

Recall that we have defined 
$$\delta=\dim (\Lambda^2V^2) -\dim (V^2A+dV^3)\in \{0,1\},$$
where $A\subset V^2$ has codimension $1$, and the differential of the derivations satisfies 
\begin{equation*}
    \begin{split}
        &D_3: (V^3)^\ast\xrightarrow{\cong} (y,A) \\
        &D_2: (V^2)^\ast \hookrightarrow (V^3,V^2)  \oplus(y,V^3), \ \ D_2(y,V^2)=0, \\
        &D_1: (V^3,V^2)\oplus (y,V^3) \twoheadrightarrow (y,V^2A+dV^3). 
    \end{split}
\end{equation*}
We claim that $dV^3\subset V^2A$, i.e., $\delta=1$. Otherwise, for a basis $\{w_1,\dots,w_s\}$ for $V^3$, suppose that $dw_1=x^2+ \sum_{i=1}^s v_ia_i$, where $\{a_1,\dots,a_s\}$ satisfies 
$$Dw_1^\ast=(y,a_1), \dots, Dw_s^\ast=(y,a_s). $$
It follows that 
$$dy=\sum_{i=1}^s w_i a_i $$
for degree reasons. However, the equation $d^2y=0$ gives $x^2a_1=0$, a contradiction. Thus we have $dV^3\subset V^2A$. 

It follows that 
\begin{equation*}
    \begin{split}
        &\dim \im D_1=\dim (y,V^2A) =\binom{s+2}{2} -1, \\
        &\dim \ker D_1= \dim \im D_2=s+1, \\
        &\dim \,((V^3,V^2)  \oplus(y,V^3))=s(s+1)+s,
    \end{split}
\end{equation*}
Then the equation 
$$\dim \im D_1+ \dim \ker D_1= \dim \,((V^3,V^2)  \oplus(y,V^3))$$
implies $s=2$. Then we have 
$$V^4=\Q y,\ \ V^3=\Q w_1\oplus \Q w_2,\ \ V^2=\Q a_1\oplus \Q a_2\oplus \Q x$$
with $Dw_1^\ast=(y,a_1)$, $Dw_2^\ast=(y,a_2)$ and $dy=w_1a_1+w_2a_2$. Then the equation $d^2y=0$ implies 
$$(dw_1)a_1+(dw_2)a_2=0. $$
Then we can conclude that 
$$dw_1=a_2l,\ \ dw_2=-a_1l$$
for some $l\in V^2$. If $l\in \Q a_1\oplus \Q a_2$, then $x^\ast$ is a non-exact cycle derivation, a contradiction. Thus $\{a_1,a_2,l\}$ forms a new basis for $V^2$. Rewrite the generators $\{a_2,a_1,l,w_1,w_2,y  \}$ of $V$ as 
$$\{a,b,x,z,w,y\},$$ 
respectively, with the differential 
\begin{equation*}
    \begin{split}
        &da=db=dx=0,\ \ dz=ax, \\
        &dw=-bx,\ \ dy=aw+bz. 
    \end{split}
\end{equation*}

On the other hand, by direct computation, we can show the above Sullivan model indeed realizes $K(\Q, 3) \times K(\Q,5)$. Thus we obtain the following proposition. 
\begin{proposition}\label{n3}
    Let $X$ be a simply-connected $\pi$-finite space with rational homology of finite type, and let $M_X$ be the Sullivan model for $X$. Then the classifying space $\B (X)$ has the rational homotopy type of $K(\Q, 3) \times K(\Q,5)$ if and only if 
    \begin{equation*}
            \begin{split}
                M_X\cong\ &\big( \Lambda(a_2,b_2,x_2,z_3,w_3,y_4) ,d \big), \\
                &da=db=dx=0,\ \ dz=ax, \\
                &dw=-bx, \ \ dy=aw+bz. 
            \end{split}
        \end{equation*}
    
\end{proposition}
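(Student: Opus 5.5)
The proof has two directions. The ``only if'' direction is essentially the derivation carried out just before the statement; the ``if'' direction needs a direct computation of $H_\ast(\Der'(M_X),D)$.

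\textbf{Only if.} Assume $\B(X)\simeq_\Q K(\Q,3)\times K(\Q,5)$, so $N=4$.

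1. Theorem \ref{pf} gives $V^{\geq 5}=0$ and $V^4=\Q y$. Lemma \ref{codim} gives $\dim V^2=\dim V^3+1=s+1$.

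2. Show $\delta=1$. If some $w_1\in V^3$ had $dw_1\equiv x^2 \pmod{V^2A}$, then the identification $D(V^3)^\ast=(y,A)$ would force $dy=\sum w_ia_i$. In that case $d^2y$ contains the term $x^2a_1$, which cannot cancel, a contradiction.

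3. Count dimensions in the complex $(V^3,V^2)\oplus(y,V^3)$. Its dimension is $s(s+1)+s$. The image of $D_1$ is $(y,V^2A)$, of dimension $\binom{s+2}{2}-1$. The kernel equals the image of the injective $D_2$ on $(V^2)^\ast$, of dimension $s+1$. Rank--nullity then gives $s^2-s-2=0$, so $s=2$.

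4. Solve $(dw_1)a_1+(dw_2)a_2=0$ in the polynomial ring $\Q[V^2]$. By unique factorization, $dw_1=a_2l$ and $dw_2=-a_1l$ for some $l\in V^2$.

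5. Show $l\notin A$. Otherwise $x^\ast$ kills every differential and is a non-exact cycle, contradicting (\ref{td0}). So $\{a_1,a_2,l\}$ is a basis of $V^2$, and relabelling gives the stated model.

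\textbf{If.} I compute $H_\ast(\Der'(M_X),D)$ directly. I will grade by a weight with $\mathrm{wt}(a)=\mathrm{wt}(b)=1$, $\mathrm{wt}(x)=0$, $\mathrm{wt}(z)=\mathrm{wt}(w)=1$, $\mathrm{wt}(y)=2$. Then $d$ preserves weight, and derivations split by weight shift.

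Degree 4 is only $y^\ast$, which is a cycle.

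In degree 3, $(V^3)^\ast=\langle z^\ast,w^\ast\rangle$ maps injectively onto $(y,a)$ and $(y,b)$.

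In degree 2 the space is $(V^2)^\ast\oplus(y,V^2)$, which is 6-dimensional. Each $(y,v)$ is a cycle. For the duals:
\begin{itemize}
\item $Da^\ast$ involves $(z,x)$ and $(y,w)$;
\item $Db^\ast$ involves $(w,x)$ and $(y,z)$;
\item $Dx^\ast$ involves $(z,a)$ and $(w,b)$.
\end{itemize}
These three images are independent, so $Z_2=(y,V^2)$ and $H_2\cong(y,V^2)/(y,A)=\Q(y,x)$.

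In degree 1 the space $(V^3,V^2)\oplus(y,V^3)$ is 8-dimensional. $D_1$ maps onto $(y,V^2A)$, which is 5-dimensional, so $\ker D_1$ is 3-dimensional and equals $\im D_2$. Hence $H_1=0$. I must verify surjectivity of $D_1$ explicitly, using the quadratic monomials $ax,a^2,ab,b^2,bx$.

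Degree $\geq 5$ contributes nothing.

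The main obstacle is the sign bookkeeping in these small computations. The only genuinely structural step is the rank count in the ``only if'' direction.
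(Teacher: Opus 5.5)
Your proposal is correct and follows the paper's argument essentially step for step in the ``only if'' direction: you rule out $\delta=0$ via the uncancellable $x^2a_1$ term in $d^2y$, use the rank count on $\Der_1(\Lambda V)$ to get $s^2-s-2=0$, factor $(dw_1)a_1+(dw_2)a_2=0$, and exclude $l\in A$ because $x^\ast$ would then be a non-exact cycle. For the converse the paper only says ``by direct computation,'' and your degree-by-degree check is a correct way to carry it out: $Dz^\ast=(y,b)$ and $Dw^\ast=(y,a)$, the images $Da^\ast,Db^\ast,Dx^\ast$ are independent, and $D_1$ maps onto the $5$-dimensional $(y,V^2A)$ (the weight grading you introduce is never actually used).
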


\subsection{The case $n=4$}

\ 

In this case we have 
$$V^5=\Q y,\ \ V^{\geq 6}=0,$$
and Lemma \ref{codim} implies that $\dim V^2=\dim V^4+1$. 
The derivations are displayed as follows. 
\begin{equation*}
    \centerline{
    \begin{tabular}{c|c}
    degree & derivations\\
    \hline
    5&$\Q y^\ast$\\
    \hline
    4&$(V^4)^\ast$\\
    \hline
    3&$(V^3)^\ast \oplus(y,V^2)$ \\
    \hline
    2& $(V^2)^\ast \oplus (V^4,V^2)  \oplus(y,V^3)$\\
    \hline
    1& $(V^3,V^2)\oplus (V^4,V^3)\oplus (y,V^4 \oplus \Lambda^2V^2) $.
    \end{tabular}}
\end{equation*}
Set $\dim V^2=s+1$, $\dim V^3=t$ and $\dim V^4=s$. 

Recall that we have defined 
$$\delta=\dim (\Lambda^2V^2) -\dim (V^2A+dV^3)\in \{0,1\},$$
where $A\subset V^2$ has codimension $1$, and the differential of the derivations satisfies 
\begin{equation*}
    \begin{split}
        &D_3: (V^3)^\ast \hookrightarrow (V^4,V^2)  \oplus(y,V^3), \ \ D_3(y,V^2)=0, \\
        &D_2: (V^4,V^2)\oplus (y,V^3) \twoheadrightarrow (y,V^2A+dV^3), \ \ D_2(V^2)^\ast \subset \Der_1^\prime(\Lambda V).
    \end{split}
\end{equation*}
Note that 
\begin{equation*}
    \begin{split}
        &\dim \im D_2 \leq \dim (V^2A+dV^3)+\dim V^2=\binom{s+2}{2}-\delta+s+1, \\
        &\dim \ker D_2=\dim \im D_3=t, \\
        &\dim \im D_2+\dim \ker D_2=\dim \,\Der^\prime_2(\Lambda V) =(s+1)^2+t. 
    \end{split}
\end{equation*}
It follows that 
$$\frac{s(s-1)}{2}\leq 1-\delta\leq 1, $$
which implies $s=0,1,2$. As in Proposition \ref{sig}, we can define a map 
$$\Sigma_3: (V^3)^\ast\longrightarrow V^3$$
by setting $\Sigma_3(\theta)=(D\theta)(y)$. 

\begin{lemma}\label{s01}
    Suppose $s=0$ or $1$. Then the map $\Sigma_3$ is an isomorphism. Moreover, there exists a basis for $V^3$
    $$\{u_1,v_1,\dots,u_r,v_r\}$$
    satisfying $\Sigma_3 u_i^\ast =v_i$ and $\Sigma_3 v_i^\ast =-u_i$ for $1\leq i\leq r$, where $\{u_1^\ast,v_1^\ast,\dots,u_r^\ast,v_r^\ast\}$ is the dual basis for $(V^3)^\ast$. 
    
\end{lemma}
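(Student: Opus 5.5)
The plan is to reduce the statement to linear algebra on an antisymmetric bilinear form, exactly as in Proposition \ref{sig}\,(1), and to obtain injectivity of $\Sigma_3$ from the homology computation (\ref{td0}). Here $N=5$, and every $\theta\in(V^3)^\ast$ has degree $3$. Since $\theta(y)=0$, we have $(D\theta)(y)=\pm\theta(dy)$. Because $\theta$ kills $V^2$ and $V^4$, the element $\theta(dy)$ is, up to sign, the partial derivative with respect to the dual generator of the $\Lambda^2V^3$-component of $dy$. In particular $\Sigma_3$ indeed lands in $(\Lambda V)^3=V^3$.

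\textbf{Step 1 (antisymmetry).} I would rerun the computation in the proof of Proposition \ref{sig}\,(1) verbatim. We have $[\theta_1,\theta_2]=0$ and $\theta_i(y)=0$, so expanding $0=(D[\theta_1,\theta_2])(y)$ gives
$$\theta_1(\Sigma_3\theta_2)=(-1)^{3+3+9}\,\theta_2(\Sigma_3\theta_1)=-\theta_2(\Sigma_3\theta_1).$$
Hence $\omega(\theta_1,\theta_2):=\theta_1(\Sigma_3\theta_2)\in\Q$ is an antisymmetric bilinear form on $(V^3)^\ast$. Moreover, $\Sigma_3$ is injective if and only if $\omega$ is nondegenerate.

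\textbf{Step 2 (injectivity of $\Sigma_3$; the main point).} Let $\theta\in(V^3)^\ast$ with $\Sigma_3\theta=0$. From the table, $D\theta\in(V^4,V^2)\oplus(y,V^3)$, and its $(y,V^3)$-component is $(y,\Sigma_3\theta)=0$, so $D\theta\in(V^4,V^2)$.

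First I would record that $D_3$ is injective on $(V^3)^\ast$. By (\ref{td0}), $H_3$ is spanned by $(y,x)$, and the boundaries in degree $3$ are $D(V^4)^\ast=(y,A)\subset(y,V^2)$. A nonzero cycle in $(V^3)^\ast$ would therefore produce extra homology.

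\emph{If $s=0$,} then $V^4=0$, so $D\theta=0$ and hence $\theta=0$.

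\emph{If $s=1$,} write $V^4=\Q w$ and $V^2=\Q a\oplus\Q x$ with $A=\Q a$. Since $Dw^\ast=(y,a)$, the $w$-linear part of $dy$ is $\pm wa$. Write $D\theta=(w,\ell)$ with $\ell\in V^2$. Then $D\theta$ is a cycle, and we compute
$$0=D(w,\ell)(y)=\pm(w,\ell)(dy)=\pm\ell a\in\Lambda^2V^2.$$
This forces $\ell=0$, so $D\theta=0$ and again $\theta=0$.

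This case analysis is the step I expect to need care. The delicate points are the signs, and the check that no other term of $dy$ contributes to $(w,\ell)(dy)$; the latter holds for degree reasons, since $(w,\ell)$ only sees the $w$-linear part of $dy$. Since $\dim(V^3)^\ast=\dim V^3$, injectivity gives that $\Sigma_3$ is an isomorphism.

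\textbf{Step 3 (symplectic basis).} The form $\omega$ is a nondegenerate antisymmetric form over $\Q$, so Darboux's lemma gives that $\dim V^3=2r$ is even. It also gives a basis $\{\phi_1,\psi_1,\dots,\phi_r,\psi_r\}$ of $(V^3)^\ast$ with $\omega(\phi_i,\psi_j)=\delta_{ij}$ and $\omega(\phi_i,\phi_j)=\omega(\psi_i,\psi_j)=0$. Let $\{u_1,v_1,\dots,u_r,v_r\}$ be the basis of $V^3$ dual to this one, so that $u_i^\ast=\phi_i$ and $v_i^\ast=\psi_i$. Expanding $\Sigma_3u_i^\ast$ in this basis, its coefficients are the values $\omega(\cdot,\phi_i)$ evaluated on the dual basis, which gives $\Sigma_3u_i^\ast=\pm v_i$ and $\Sigma_3v_i^\ast=\mp u_i$. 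Rescaling $v_i$ by $-1$ if necessary yields $\Sigma_3u_i^\ast=v_i$ and $\Sigma_3v_i^\ast=-u_i$, as claimed.
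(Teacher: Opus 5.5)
Your proof is correct and follows the paper's route: antisymmetry of $\theta_1(\Sigma_3\theta_2)$ via the computation of Proposition \ref{sig}\,(1), then injectivity of $\Sigma_3$ from the injectivity of $D$ on $(V^4,V^2)$ for $s=0,1$ combined with the injectivity of $D_3$ on $(V^3)^\ast$, and finally a Darboux basis for the resulting symplectic form. You also make explicit two points the paper only asserts, namely $D(w,\ell)(y)=\pm\ell a$ and the reason $(V^3)^\ast$ contains no nonzero cycle (which uses $H_3=\Q(y,x)$ with $x\notin A$).
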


\begin{proof}
    By direct computation, we have 
    $$\theta_1(\Sigma_3\theta_2)=-\theta_2(\Sigma_3\theta_1),\ \ \theta_1,\theta_2\in (V^3)^\ast. $$
    Note that the restriction of the differential $D_2$ to $(V^4,V^2)$ is injective for $s=0,1$. The vanishing of homology in degree $2$ implies that the composition 
    $$(V^3)^\ast \xrightarrow{\ D_3 \ } (V^4,V^2)  \oplus(y,V^3) \xrightarrow{ {\rm projection}  } (y,V^3) $$
    is also injective, which implies $\Sigma_3$ is an injection, hence an isomorphism. 

    Moreover, $(\theta_1,\theta_2) \mapsto \theta_1(\Sigma_3\theta_2)$ defines a non-degenerate anti-symmetric bilinear map on $(V^3)^\ast$, which is a symplectic form. Thus, we can choose a symplectic basis for $(V^3)^\ast$ 
    $$\{u_1^\ast,v_1^\ast,\dots,u_r^\ast,v_r^\ast\}$$ 
    satisfying $u_i^\ast(\Sigma_3u_j^\ast)=v_i^\ast(\Sigma_3v_j^\ast)=0$ and $v_i^\ast(\Sigma_3u_j^\ast)=-u_i^\ast(\Sigma_3v_j^\ast)=\delta_{ij}$ for $1\leq i,j\leq r$. Their dual basis 
    $$\{u_1,v_1,\dots,u_r,v_r\}$$ 
    forms a basis for $V^3$. 

\end{proof}

For degree reasons, we can write 
$$dy=R+S+T,$$
where $R\in \Lambda^3V^2$, $S\in V^2V^4$ and $T\in \Lambda^2V^3$. Choose a basis $\{w_1,\dots,w_s\}$ for $V^4$, and then we have $\{a_1,\dots,a_s,x\}$ as a basis for $V^2$, where $\{a_1,\dots,a_s\}$ satisfies 
$$Dw_1^\ast=-(y,a_1), \dots, Dw_s^\ast=-(y,a_s). $$
Then we can deduce that 
$$S= \sum_{i=1}^s a_iw_i .$$
And we also have 
$$T=\sum_{j=1}^r u_jv_j$$ 
for $s=0,1$. The following discussion is divided into three cases for $s=0,1,2$. 

If $s=0$, then we have $V^4=0$, $ V^2=\Q x$. For degree reasons, we can set $R=\lambda x^3$ for some $\lambda\in\Q$. From the equation $d^2y=0$, we can deduce $du_j=dv_j=0$ for $1\leq j\leq r$. Suppose that $\lambda=0$. Then we have $dy=\sum_{j=1}^r u_jv_j$, which implies that $x^\ast$ is a non-exact cycle, a contradiction. Thus $\lambda\neq0$, and we may assume $\lambda=1$. This gives the generators of $V$
$$\{x,u_1,v_1,\dots,u_r,v_r,y\}$$
for $r\geq 0$, with the differential 
$$dx=du_{i}=dv_{i}=0, \ \ dy=x^3+\sum_{j=1}^r  u_jv_j, $$
and we can show that such a Sullivan model indeed realizes $K(\Q, 4) \times K(\Q,6)$ by direct computation. 

If $s=1$, then we have $V^4=\Q w$, $ V^2=\Q a\oplus\Q x$. Since $R\in \Lambda^3V^2$, we can set
$$R=\lambda_1 x^3+\lambda_2x^2a+\lambda_3 xa^2+\lambda_4 a^3$$
for some $\lambda_1,\lambda_2,\lambda_3,\lambda_4 \in \Q$. By the change of basis $w+\lambda_2 x^2+\lambda_3 xa+\lambda_4 a^2 \mapsto w$, we can replace the differential $dy$ by 
$$dy=\lambda_1 x^3+aw + \sum_{j=1}^r u_jv_j.$$ 
The equation $d^2y=0$ gives 
$$a(dw)+ \sum_{j=1}^r (du_j)v_j-\sum_{j=1}^r u_j(dv_j)=0.$$
Note that $dw\in V^2 V^3$ and $du_j,dv_j\in \Lambda^2V^2$ for degree reasons. Then we conclude that 
$$dw=\sum_{j=1}^r (u_jl_{v_j}-l_{u_j}v_j),$$
where $du_j=al_{u_j}$, $dv_j=al_{v_j}$. By direct computation, the following derivations 
$$(y,a^2),\ (y,ax),\ (y,x^2),\ (u_j,a)-(w,v_j), \ (v_j,a)+(w,u_j)$$
for $1\leq j\leq r$ are linearly independent cycles of degree $1$, which implies $\dim \ker D_1\geq 2r+3$. On the other hand, we have concluded that $\dim\im D_2=(s+1)^2=4$. The vanishing of homology in degree $1$ gives  
$$4=\dim\im D_2=\dim \ker D_1\geq 2r+3,$$
implying $r=0$. And we can similarly conclude that $\lambda_1\neq 0$; otherwise $x^\ast$ would be a non-exact cycle. Without loss of generality, we may assume $\lambda_1=1$. This gives the generators of $V$
$$\{a,x,w,y\}$$
with the differential 
$$da=dx=dw=0, \ \ dy=x^3+aw, $$
and we can show that such a Sullivan model indeed realizes $K(\Q, 4) \times K(\Q,6)$ by direct computation. 

If $s=2$, then we have 
$$V^4=\Q w_1\oplus \Q w_2,\ \ V^2=\Q a_1\oplus \Q a_2\oplus\Q x.$$
Note that the kernel of the differential 
$$D_2: (V^4,V^2)\longrightarrow (y,\Lambda^2V^2)$$
is generated by $(w_1,a_2)-(w_2,a_1)$, which implies that there exists a derivation $z^\ast\in (V^3)^\ast$ such that $Dz^\ast=(w_1,a_2)-(w_2,a_1)$. Denote the image of the composition 
\begin{equation}\label{ty}
    (V^3)^\ast \xrightarrow{\ D_3 \ } (V^4,V^2)  \oplus(y,V^3) \xrightarrow{ {\rm projection}  } (y,V^3)
\end{equation}
by $(y,C)$ for a subspace $C\subset V^3$. For the map $\Sigma_3$, we can obtain the following lemma. 
\begin{lemma}\label{s2}
    Suppose $s=2$. Then the map $\Sigma_3$ satisfies 
    $$\ker \Sigma_3=\Q z^\ast, \ \  \im \Sigma_3=C. $$
    Moreover, we have $V^3=\Q z\oplus C$, and there exists a basis for $V^3$ 
    $$\{z,u_1,v_1,\dots,u_r,v_r\}$$
    satisfying $\Sigma_3 u_i^\ast =v_i$ and $\Sigma_3 v_i^\ast =-u_i$ for $1\leq i\leq r$, where $\{z^\ast,u_1^\ast,v_1^\ast,\dots,u_r^\ast,v_r^\ast\}$ is the dual basis for $(V^3)^\ast$. 
    
\end{lemma}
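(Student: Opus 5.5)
The plan is to mirror the proof of Lemma \ref{s01}, replacing injectivity of $\Sigma_3$ by the statement that its kernel is exactly $\Q z^\ast$.

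\textbf{Step 1: skew-symmetry and $\ker\Sigma_3\subseteq$ the degenerate directions.} As there, for $\theta_1,\theta_2\in (V^3)^\ast$ one has $\theta_1(\Sigma_3\theta_2)=-\theta_2(\Sigma_3\theta_1)$. This follows from $(D[\theta_1,\theta_2])(y)=0$ exactly as in Proposition \ref{sig}\,(1). Consequently the pairing $\langle\theta_1,\theta_2\rangle=\theta_1(\Sigma_3\theta_2)$ is an antisymmetric bilinear form on $(V^3)^\ast$. Moreover $\theta\in\ker\Sigma_3$ if and only if $\theta$ lies in the radical of the form, if and only if $\theta(\im\Sigma_3)=0$. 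Also, by definition, $\Sigma_3(\theta)$ is the $(y,V^3)$-component of $D_3\theta$, so $\im\Sigma_3=C$.

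\textbf{Step 2: identify the kernel.} Here $s=2$, so $\dim\im D_2\le \binom{4}{2}-\delta+3$ and the inequality $\frac{s(s-1)}2\le 1-\delta$ forces $\delta=0$. Hence all the estimates are equalities. In particular, the restriction of $D_2$ to $(V^4,V^2)$ has kernel exactly $\Q\big((w_1,a_2)-(w_2,a_1)\big)$, and $D_2$ is otherwise injective on the relevant part.

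Now suppose $\Sigma_3\theta=0$. Then $D_3\theta\in (V^4,V^2)$ is a $D_2$-cycle. Its image under $D_2$ lands in $(y,V^2A)$ and comes only from that component, so $D_3\theta\in \Q\big((w_1,a_2)-(w_2,a_1)\big)=\Q Dz^\ast$. Since $D_3$ is injective (there is no homology in degree $3$ apart from $(y,x)$, which is not in $(V^3)^\ast$), we get $\theta\in\Q z^\ast$.

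Conversely, $Dz^\ast=(w_1,a_2)-(w_2,a_1)$ has no $(y,V^3)$-part, so $\Sigma_3z^\ast=0$. Therefore $\ker\Sigma_3=\Q z^\ast$, and $\dim C=\dim V^3-1$.

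\textbf{Step 3: decomposition and symplectic basis.} From Step 1, $z^\ast$ annihilates $C=\im\Sigma_3$, and $C$ has codimension one. Choose $z\in V^3$ with $z^\ast(z)=1$; then $V^3=\Q z\oplus C$ and $z^\ast$ is dual to $z$ relative to this splitting. The form restricted to $C^\ast$, identified with the annihilator of $z$ in $(V^3)^\ast$, has radical $\ker\Sigma_3\cap C^\ast=0$, so it is symplectic. Pick a symplectic basis $u_i^\ast,v_i^\ast$ as in Lemma \ref{s01}, and take the dual basis $\{z,u_i,v_i\}$ of $V^3$. After possibly adjusting $z$ by an element of $C$ so that $u_i^\ast(z)=v_i^\ast(z)=0$, the relations $\Sigma_3u_i^\ast=v_i$ and $\Sigma_3v_i^\ast=-u_i$ follow: $\Sigma_3u_i^\ast\in C$ has no $z$-component, and its remaining coordinates are read off from the form.

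\textbf{Main obstacle.} The delicate point is Step 2. One must use the equality case of the dimension count precisely to guarantee that a cycle in $(V^4,V^2)\oplus(y,V^3)$ with vanishing $(y,V^3)$-part is a multiple of $Dz^\ast$. This requires checking that $D_2$ restricted to $(V^4,V^2)$ has one-dimensional kernel, which holds because the elements $a_iv$ span $V^2A$ with the single relation $a_1a_2=a_2a_1$.
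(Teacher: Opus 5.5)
Your proposal is correct and follows essentially the same route as the paper. The shared steps are: antisymmetry of $(\theta_1,\theta_2)\mapsto\theta_1(\Sigma_3\theta_2)$; then $\ker\Sigma_3=\Q z^\ast$, obtained from the one-dimensional kernel of $D$ on $(V^4,V^2)$ together with injectivity of $D_3$ on $(V^3)^\ast$; then $C=\im\Sigma_3=\ker z^\ast$ and a symplectic basis of $C^\ast$. You mainly spell out steps the paper leaves implicit. The detour through $\delta=0$ and the ``adjust $z$'' remark are unnecessary but harmless. The kernel computation only uses $\dim V^2A=5$, and $u_i^\ast(z)=v_i^\ast(z)=0$ already holds because you took $C^\ast$ to be the annihilator of $z$.
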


\begin{proof}
    By direct computation, we have 
    $$\theta_1(\Sigma_3\theta_2)=-\theta_2(\Sigma_3\theta_1),\ \ \theta_1,\theta_2\in (V^3)^\ast.$$
    Note that the composition (\ref{ty}) sends a derivation $\theta$ exactly to $(y,\Sigma_3\theta)$. Since $Dz^\ast=(w_1,a_2)-(w_2,a_1)$, we can conclude that 
    $$\ker \Sigma_3=\Q z^\ast, \ \  \im \Sigma_3=C, $$
    and $C=\ker z^\ast$. 
    
    We can decompose $V^3=\Q z\oplus C$, and thus $(V^3)^\ast=\Q z^\ast \oplus C^\ast$. Then we have the restriction map 
    $$\Sigma_3|_{C^\ast}: C^\ast \longrightarrow C.$$
    Similarly, $(\theta_1,\theta_2) \mapsto \theta_1(\Sigma_3\theta_2)$ defines a non-degenerate anti-symmetric bilinear map on $C^\ast$, and we can choose a symplectic basis for $C^\ast$ 
    $$\{u_1^\ast,v_1^\ast,\dots,u_r^\ast,v_r^\ast\}$$ 
    satisfying $u_i^\ast(\Sigma_3u_j^\ast)=v_i^\ast(\Sigma_3v_j^\ast)=0$ and $v_i^\ast(\Sigma_3u_j^\ast)=-u_i^\ast(\Sigma_3v_j^\ast)=\delta_{ij}$ for $1\leq i,j\leq r$. Their dual basis 
    $$\{u_1,v_1,\dots,u_r,v_r\}$$ 
    forms a basis for $C$. Finally, the proof is completed by adding $z^\ast$, $z$ to the bases for $C^\ast$, $C$, respectively. 
    
\end{proof}
Lemma \ref{s2} implies 
$$T=\sum_{j=1}^r u_jv_j$$ 
for $s=2$, and thus we have 
$$dy=R+a_1w_1+a_2w_2+\sum_{j=1}^r u_jv_j. $$
Since $R\in \Lambda^3V^2$, we can set 
$$R=\lambda x^3 +a_1F_1+a_2F_2  $$
for some $F_1,F_2\in \Lambda^2V^2$. By the change of basis $w_1+F_1\mapsto w_1$ and $w_2+F_2\mapsto w_2$, we can assume $F_1=F_2=0$ without loss of generality. Write the differential as 
$$dw_1=l_1z+G_1,\ \ dw_2=l_2z+G_2$$
for some $l_1,l_2\in V^2$ and $G_1,G_2\in V^2C$. Then the equation $d^2y=0$ gives 
$$(a_1l_1+a_2l_2)z+(a_1G_1+a_2G_2+H)=0, $$
where $H=\sum_{j=1}^r ((du_j)v_j-u_j(dv_j))$. Note that 
$$(a_1l_1+a_2l_2)z\in \Lambda^2V^2\cdot z,\ \ a_1G_1+a_2G_2+H\in \Lambda^2V^2\cdot C,$$
and it follows that 
$$a_1l_1+a_2l_2=0,\ \ a_1G_1+a_2G_2+H=0. $$
Note that it is impossible that $l_1=l_2=0$; otherwise $z^\ast$ would be a non-exact cycle. Then from $a_1l_1+a_2l_2=0$, we may assume $l_1=a_2$, $l_2=-a_1$ without loss of generality. By direct computation, we have 
\begin{equation*}
    \begin{split}
        D (u_j,a_1)&=(y,a_1v_j)+(w_1,u_j^\ast(a_1G_1)) +(w_2,u_j^\ast(a_1G_2)), \\
        D(w_1,v_j)&=(y,a_1v_j)+ (w_1,dv_j), \\
        D(z,u_j^\ast(G_2) )&=(w_1,u_j^\ast(a_2G_2))-(w_2,u_j^\ast(a_1G_2)). \\
    \end{split}
\end{equation*}
for $1\leq j\leq r$. Since $dv_j=-u_j^\ast (H)$ and $a_1G_1+a_2G_2+H=0 $, we derive that 
$$(u_j,a_1)-(w_1,v_j)+(z,u_j^\ast(G_2) )$$
is a cycle derivation of degree $1$. Similarly, we can conclude that the following derivations 
$$(y,x^2),\ (y,xa_1),\ (y,xa_2),\ (y,a_1^2),\ (y,a_1a_2),\ (y,a_2^2),$$
$$(u_j,a_1)-(w_1,v_j)+(z,u_j^\ast(G_2) ), \ (u_j,a_2)-(w_2,v_j)-(z,u_j^\ast(G_1) ), $$
$$(v_j,a_1)+(w_1,u_j)+(z,v_j^\ast(G_2) ) , \ (v_j,a_2)+(w_2,u_j)-(z,v_j^\ast(G_1) )$$
for $1\leq j\leq r$ are linearly independent cycles of degree $1$, which implies $\dim \ker D_1\geq 4r+6$. On the other hand, we have concluded that $\dim\im D_2=(s+1)^2=9$. The vanishing of homology in degree $1$ gives  
$$9=\dim\im D_2=\dim \ker D_1\geq 4r+6,$$
implying $r=0$, i.e., $C=0$ and $V^3=\Q z$. Then we have 
$$0=d^2w_1=a_2dz,$$
implying $dz=0$. Since $z$ does not appear in $dy$, it follows that $(y,z)$ is a non-exact cycle, a contradiction. 

Combining the three cases $s=0,1,2$ above, we obtain the following proposition. 
\begin{proposition}\label{n4}
    Let $X$ be a simply-connected $\pi$-finite space with rational homology of finite type, and let $M_X$ be the Sullivan model for $X$. Then the classifying space $\B (X)$ has the rational homotopy type of $K(\Q, 4) \times K(\Q,6)$ if and only if 
    \begin{equation*}
        \begin{split}
            M_X\cong\ &\big( \Lambda(x_2,u_{1,3},v_{1,3},\dots,u_{r,3},v_{r,3},y_5) ,d \big), \ \  \textit{   for }  r\geq0, \\ 
            &dx=du_{i}=dv_{i}=0, \ \ dy=x^3+\sum_{j=1}^r  u_jv_j,
        \end{split}
    \end{equation*}
    or 
    \begin{equation*}
        \begin{split}
            M_X\cong\ &\big( \Lambda(a_2,x_2,w_4,y_5) ,d \big), \\
            &da=dx=dw=0, \ \ dy=x^3+aw.
        \end{split}
    \end{equation*}
        
\end{proposition}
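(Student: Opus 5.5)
The proposition splits into two directions. The necessity direction is already contained in the case analysis preceding the statement. For $n=4$ we have $N=5$, and the dimension count on $D_2$ gives $s\in\{0,1,2\}$. Lemmas \ref{s01} and \ref{s2} put $dy$ into the normal form $R+\sum a_iw_i+\sum u_jv_j$.

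The three values of $s$ then resolve as follows:
\begin{enumerate}
\item For $s=0$ we get the family $dy=x^3+\sum u_jv_j$. Here $\lambda\neq0$ is forced, since otherwise $x^\ast$ is a non-exact cycle.
\item For $s=1$ the cycle count $\dim\ker D_1\ge 2r+3$ against $\dim\im D_2=4$ forces $r=0$, leaving $dy=x^3+aw$.
\item For $s=2$ the same count ($4r+6\le 9$) forces $r=0$, and then $(y,z)$ is a non-exact cycle, a contradiction.
\end{enumerate}
So necessity only requires assembling these steps, together with the observation that each change of basis used is a CDGA isomorphism.

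The remaining work is sufficiency. For each listed model we must verify that $H_k(\Der'(M_X),D)$ equals $\Q y^\ast$ for $k=5$, equals $\Q(y,x)$ for $k=3$, and vanishes otherwise. By Theorem \ref{J} this says $\pi_\ast(\B(X))\otimes\Q$ is $\Q$ in degrees $4$ and $6$ and zero elsewhere. Any simply-connected space with these rational homotopy groups is rationally $K(\Q,4)\times K(\Q,6)$, because a minimal Sullivan algebra on two even generators of degrees $4$ and $6$ must have zero differential. (Equivalently, $\B(X)$ is an H-space-like product here.)

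For the family $dy=x^3+\sum u_jv_j$, the differential $D$ is nonzero only through $dy$. The derivation spaces in each degree, read off from the table above with $V^4=0$, are:
\begin{itemize}
\item degree $5$: $y^\ast$, which is a cycle;
\item degree $4$: zero;
\item degree $3$: $(V^3)^\ast\oplus(y,x)$;
\item degree $2$: $x^\ast\oplus(y,V^3)$;
\item degree $1$: $(V^3,x)\oplus(y,x^2)$.
\end{itemize}
The differential acts by
\[
D u_j^\ast=\pm(y,v_j),\qquad D v_j^\ast=\pm(y,u_j),\qquad D x^\ast=\pm3(y,x^2),\qquad D(y,x)=0,
\]
and $D(u_j,x)=\pm(y,xv_j)$, $D(v_j,x)=\pm(y,xu_j)$, with $D_1$ landing in $\Der_0$. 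Hence:
\begin{itemize}
\item In degree $3$ the cycles are everything and the boundaries are zero, since $\Der_4=0$. This would give homology $(V^3)^\ast\oplus\Q(y,x)$, too large when $r>0$. However, $Du_j^\ast\neq0$, so the $(V^3)^\ast$ part contributes no cycles and degree-$3$ homology is $\Q(y,x)$.
\item In degree $2$, the image of $D$ from degree $3$ is all of $(y,V^3)$, and $x^\ast$ is not a cycle. So degree-$2$ homology vanishes.
\item In degree $1$, the space $\Der'_1$ consists of the cycles. The cycle $(y,x^2)$ is hit by $Dx^\ast$, and no combination of the $(V^3,x)$ is a cycle, since the images $(y,xu_j),(y,xv_j)$ are independent. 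So degree-$1$ homology vanishes.
\end{itemize}

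For the model $dy=x^3+aw$, the analogous tabulation is:
\begin{itemize}
\item $D w^\ast=\pm(y,a)$;
\item $D a^\ast=\pm(y,w)$, $D x^\ast=\pm3(y,x^2)$;
\item in degree $1$, $D(w,a)=\pm(y,a^2)$, $D(w,x)=\pm(y,ax)$, and $(y,x^2)$ is a cycle.
\end{itemize}
Degree-$1$ cycles are spanned by $(y,x^2)$ together with combinations landing in zero. Since $(w,a)$ and $(w,x)$ have independent images, the only degree-$1$ cycle is $(y,x^2)$, which is hit by $Dx^\ast$. Degree $2$ is $\{a^\ast,x^\ast,(w,a),(w,x)\}$, and $D$ is injective on this space, so its homology vanishes. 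Degree $3$ is $\{(y,a),(y,x)\}$ modulo $\im D=\Q(y,a)$, giving $\Q(y,x)$. Degree $4$ is $w^\ast$, which is not a cycle.

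The main obstacle is bookkeeping: confirming that no derivation of degree $1$ escapes the count, in particular those of the form $(V^3,V^2)$ in the first family that could combine into hidden cycles. I would handle this by the weight grading $\mathrm{wt}(y)=2$ with other generators of weight $1$ (or $0$ for $x$), as in Proposition \ref{geq5}, which splits $D$ into small blocks where the computation above is exhaustive.
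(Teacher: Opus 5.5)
Your proposal follows the paper's proof. Necessity is exactly the case analysis $s\in\{0,1,2\}$ that precedes the statement. Sufficiency is the ``direct computation'' of $H_\ast(\Der'(M_X),D)$, which the paper leaves to the reader and you carry out. The first family is handled correctly, although in degree $3$ you should say directly that $\ker D_3=\Q(y,x)$ rather than first asserting that everything is a cycle.

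There is one bookkeeping error in the model $dy=x^3+aw$. The derivations $(w,a)$ and $(w,x)$ have degree $|w|-2=2$, not $1$. In fact $\Der_1$ is spanned by $(y,a^2)$, $(y,ax)$, $(y,x^2)$ and $(y,w)$. All four are cycles, because $d$ vanishes on $a,x,w$ and $dy$ does not involve $y$. So your claim that $(y,x^2)$ is the only degree-$1$ cycle is false. The conclusion still holds, since
$$Da^\ast=-(y,w),\quad Dx^\ast=-3(y,x^2),\quad D(w,a)=-(y,a^2),\quad D(w,x)=-(y,ax).$$
Hence $D$ maps the $4$-dimensional $\Der_2=\langle a^\ast,x^\ast,(w,a),(w,x)\rangle$ isomorphically onto $\Der'_1=\Der_1$, which gives $H_2=H_1=0$ at once.

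You should also drop the proposed weight grading. With $\mathrm{wt}(x)=0$ and $\mathrm{wt}(y)=2$, the term $x^3$ makes $dy$ inhomogeneous in both models, so $d$ does not preserve the weight. The splitting used in the proof of Proposition \ref{geq5} is therefore unavailable here. It is also unnecessary, because your explicit enumeration of $\Der_k$ for $1\le k\le 5$ is already exhaustive.
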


Propositions \ref{n12}, \ref{n3} and \ref{n4} provide the complete classification for the branch $n\leq 4$. Finally, we can deduce our main theorem. 
\begin{proof}[Proof of Theorem \ref{fi}]
	It follows immediately from Propositions \ref{even}, \ref{odd}, \ref{geq5}, \ref{n12}, \ref{n3} and \ref{n4}. 
    
\end{proof}

\end{document}